\ifdef{\warningsaserrors}{%
  }{}
\newtheorem{theorem}{Theorem}
\newtheorem{lemma}[theorem]{Lemma}
\newtheorem{corollary}[theorem]{Corollary}
\theoremstyle{remark}
\newtheorem{remark}[theorem]{Remark}
\newcommand{\R}{\mathbf{R}}
\newcommand{\N}{\mathbf{N}}
\newcommand{\Z}{\mathbf{Z}}
\newcommand{\eps}{\varepsilon}
\newcommand{\cO}{\mathcal{O}}
\newcommand{\cC}{\mathcal{C}}
\newcommand{\cK}{\mathcal{K}}
\newcommand{\cF}{\mathcal{F}}
\newcommand{\cT}{\mathcal{T}}
\newcommand{\Energy}{\mathcal{E}}
\newcommand{\p}{\partial}
\newcommand{\dd}[1]{\;d#1}
\renewcommand{\:}{\; : \;}
\DeclareMathOperator{\sgn}{sgn}
\title{Supercritical equivariant biharmonic maps from $\R^5$ into $S^5$}
\author{M.K.\ Cooper}
\begin{document}
\begin{abstract}
  We study supercritical $O(d)$-equivariant biharmonic maps with a focus on $d = 5$, where $d$ is the dimension of the domain.
  We give a characterisation of non-trivial equivariant biharmonic maps from $\R^5$ into $S^5$ as heteroclinic orbits of an associated dynamical system.
  Moreover, we prove the existence of such non-trivial equivariant biharmonic maps.
  Finally, in stark contrast to the harmonic map analogue, we show the existence of an equivariant biharmonic map from $B^5(0, 1)$ into $S^5$ that winds around $S^5$ infinitely many times.
\end{abstract}

\maketitle

\section{Introduction}
Our main purpose is to extend the analysis of the author in~\cite{MKC15}, which studies equivariant (extrinsic) biharmonic maps in the (energy)-critical regime, to the (energy)-supercritical regime.
Due to technical obstacles, which we discuss below, we are only able to extend our analysis to the cases $d \in \{5, 6, 7\}$, with a particular emphasis on the $d = 5$ case, where $d$ is the dimension of our domain.

Next, we will introduce (extrinsic) biharmonic maps from flat domains into spheres.
Of course, one can consider biharmonic maps from more general domains into more general targets, but we do not need that generality.
Let $d, n \in \N$ and $\Omega \subset \R^d$ be a bounded domain.
We consider the target $S^n$ to be isometrically embedded in $\R^{n + 1}$ as
\begin{equation*}
  S^n := \{ x \in \R^{n + 1} \: |x| = 1 \}.
\end{equation*}
Consider the bi-energy:
\begin{equation*}
  E_2[u] := \int_{\Omega} |\Delta u|^2 \dd{x} \text{ for } u \in H^2_g(\Omega; S^n),
\end{equation*}
where $g$ is boundary data,
\begin{equation*}
  H^2(\Omega; S^n) := \{ u \in H^2(\Omega; \R^{n+1}) \: |u(x)| = 1 \text{ for a.e.\ } x \in \Omega \},
\end{equation*}
and $H^2_g(\Omega; S^n) \subset H^2(\Omega; S^n)$ such that $u \in H^2_g(\Omega; S^n)$ when $D^\alpha u = D^\alpha g$ on $\p \Omega$ for $|\alpha| \leq 1$.

The biharmonic maps under consideration here are critical points of $E_2$.
The Euler-Lagrange equation of $E_2$ is
\begin{equation}\label{eq:BiEulerLagrange}
  \left\{
  \begin{aligned}
    \Delta^2 u(x) &\perp \cT_{u(x)} S^n \text{ in } \Omega \text{ and}
    \\
    D^\alpha u &= D^\alpha g \text{ on } \p \Omega \text{ for } |\alpha| \leq 1,
  \end{aligned}
  \right.
\end{equation}
where we interpret the above in a distributional sense for $u \in H^2(\Omega; S^n)$.

One can view biharmonic maps as a higher-order analogue of harmonic maps which are critical points of the first-order Dirichlet energy
\begin{equation*}
  E_1[u] := \int_\Omega |Du|^2 \dd{x}.
\end{equation*}
We use the equivariant ansatz
\begin{equation}\label{eq:EquiAnsatzEq}
  u(x)
  =
  \begin{cases}
    \left( \frac{x}{|x|} \sin\psi(|x|), \cos\psi(|x|) \right) &\text{ for } x \in \overline{B^d(0, 1)} \setminus \{0\},
    \\
    \hat{e}_{d+1} &\text{ if } x = 0
  \end{cases}
  =:
  \Upsilon(\psi)(x).
\end{equation}
To ensure that $u = \Upsilon(\psi)$ is continuous at the origin, we set $\psi(0) = 0$.
This isn't the only value of $\psi(0)$ that achieves this, but due to the symmetry of the situation we may assume, without loss of generality, that $\psi(0) = 0$.
For a detailed discussion of this ansatz and its equivalence to $O(d)$-equivariance, see~\cite[Section 1 and Section 2]{MKC15}.

Assuming $\psi \in C([0, 1]; \R) \cap C^4((0, 1]; \R)$, and substituting $u = \Upsilon(\psi)$ into~\eqref{eq:BiEulerLagrange} gives the following ODE for $\psi$:
\begin{equation}\label{eq:PsiProblem}
  \left\{
    \begin{aligned}
      \psi^{(4)}
      &=
      6 (\psi')^2 \psi''
      + \frac{2 (d-1)}{r} \left((\psi')^3 - \psi^{(3)}\right)
      - \frac{d-1}{r^2} \big((d - \cos(2\psi) - 4) \psi''
      \\
      &\quad
      + \sin(2 \psi) (\psi')^2\big)
      + \frac{(d-3) (d-1)}{r^3} (\cos(2 \psi) + 2) \psi'
      \\
      &\quad
      - \frac{3 (d-3) (d-1)}{2 r^4} \sin(2 \psi) \text{ for } r \in (0, 1) \text{ and}
      \\
      \psi(0) &= 0,
    \end{aligned}
  \right.
\end{equation}
We carried out this calculation by making the obvious modifications to the Mathematica code presented in~\cite[p.\ 2902]{MKC15}.
Note that the boundary conditions on $u$ in~\eqref{eq:BiEulerLagrange} turn into conditions on $\psi(1)$ and $\psi'(1)$.
Although the above definitions are only presented for the domain $B^d(0, 1)$, we can extend these definitions to balls of arbitrary radius centred at the origin or $\R^d$ in the obvious way.

This ODE for $\psi$ is dilation invariant.
We make the change of variables $\psi(r) = \phi(s)$, where $s = \log r$, to arrive at an autonomous ODE for $\phi$:
\begin{equation}\label{eq:FullODE}
  \begin{aligned}
    \phi^{(4)} &= ((d-1) \cos(2 \phi)-(d-11) d-21) \phi'' - \frac{3}{2} (d-3) (d-1) \sin(2 \phi)
    \\
    &\quad
    + \left(6 \phi''-(d-1) \sin(2 \phi)\right) (\phi')^2
    + (d-4) ((d-1) \cos (2 \phi)+3 d-5) \phi'
    \\
    &\quad
    + 2 (d-4) (\phi')^3
    -2 (d-4) \phi^{(3)}.
  \end{aligned}
\end{equation}
The $\psi(0) = 0$ condition translates to the condition $\phi(s) \rightarrow 0$ as $s \rightarrow -\infty$.
We will work with~\eqref{eq:FullODE}, mostly forgetting about the ODE in~\eqref{eq:PsiProblem}.

Now let us focus on the coefficient of $\phi''$ in the first term on the RHS of~\eqref{eq:FullODE}, namely
\begin{equation*}
  (d-1) \cos(2 \phi)-(d-11) d-21.
\end{equation*}
The qualitative properties of~\eqref{eq:FullODE} largely depend on the sign of this coefficient.
We make the following observations:
\begin{itemize}
\item
  for $3 \leq d \leq 7$ this coefficient is strictly positive;
\item
  for $d \leq 2$ or $d \geq 10$ this coefficient is strictly negative; and
\item
  for $d = 8$ or $d = 9$ the coefficient changes sign.
\end{itemize}
From this, we would suppose that different techniques would be necessary for the analysis in these different regimes.
As we will remark below, this is in stark contrast to the harmonic map analogue of our problem.

Next, we would like to explain why we focus our attention on the $d = 5$ case.
Clearly, many of the terms in~\eqref{eq:FullODE} vanish when $d = 4$.
The introduction of these terms when transitioning to the $d = 5$ case cause many difficulties.
We don't believe that the qualitative nature of these difficulties change when going from $d = 5$ to $d \in \{6, 7\}$.
However, the degree of these difficulties seem to increase, and some of our techniques fail.
We will try to remark when these failures occur.
This is the reason we focus on the $d = 5$ case.

Some of our results hold for a larger range of $d$ other than just $d = 5$.
However, we only present these more general results when it does not obscure the ideas and arguments in the $d = 5$ case.
For reference, if $d = 5$ then~\eqref{eq:FullODE} becomes
\begin{equation}\label{eq:5dODE}
  \begin{aligned}
    \phi^{(4)} &= (4 \cos(2 \phi) + 9) \phi'' - 12 \sin(2 \phi)
    + \left(6 \phi''- 4 \sin(2 \phi)\right) (\phi')^2
    + (4 \cos (2 \phi) + 10) \phi'
    \\
    &\quad
    + 2 (\phi')^3
    -2 \phi^{(3)}.
  \end{aligned}
\end{equation}
One of the most interesting things regarding this work is the contrast it shows with the harmonic map analogue.
Therefore, before we state our main results we give a high-level description of this analogue.
The harmonic map analogue of~\eqref{eq:FullODE} is
\begin{equation}\label{eq:HarmonicPhi}
  \phi''_H = \frac{d - 1}{2} \sin(2 \phi_H) - (d - 2) \phi'_H.
\end{equation}
Note that in the harmonic map case $d = 2$ is the critical dimension, and the equation is supercritical for $d \geq 3$.
For the same reasons as in the setup for the biharmonic map case, we have $\phi_H(s) \rightarrow 0$ as $s \rightarrow -\infty$.
In the supercritical regime, there is only one orbit, up to the symmetries of the problem, that satisfies~\eqref{eq:HarmonicPhi}, and this is the heteroclinic orbit connecting $(\phi_H, \phi_H') = 0$ to $(\phi_H, \phi_H') = (\pi/2, 0)$.

Next we explain why a supercritical harmonic map must satisfy $|\phi_H| < \pi$, that is, such a harmonic map can't wrap around past the south-pole of its target.
One can view~\eqref{eq:HarmonicPhi} as an equation modelling a pendulum with friction.
The energy of this pendulum is
\begin{equation*}
  \Energy_H[\phi_H] := \frac{1}{2} (\phi_H')^2 + \frac{d - 1}{2} \cos^2\phi_H.
\end{equation*}
The potential energy is $\frac{d - 1}{2} \cos^2\phi_H$.
The friction means that this energy is monotone decreasing:
\begin{equation*}
  \p_s \Energy_H[\phi_H] = -(d - 2) (\phi_H')^2.
\end{equation*}
From this we see that the potential energy of $\phi_H$ must be below the maximum possible potential energy which occurs at integer multiples of $\pi$.
Another interesting observation is that in the supercritical regime for~\eqref{eq:HarmonicPhi} the qualitative properties of the solutions does not change with changing $d$ like we saw for the biharmonic case.

We now move onto stating our main results.
We show that if $d \in \{5, 6, 7\}$ and $\psi$ solves~\eqref{eq:PsiProblem}, then $u = \Upsilon(\psi)$ is a smooth biharmonic map, that is, any potential problem with~\eqref{eq:PsiProblem} at the origin does not occur.

\begin{theorem}\label{thm:SmoothBiharmonicMap}
  Let $d \in \{5, 6, 7\}$, $\psi \in C([0,1];\R) \cap C^{\infty}((0,1];\R)$, with $\psi(0) = 0$, be a solution to~\eqref{eq:PsiProblem}, and
  \begin{equation*}
    u = \Upsilon(\psi) \in C(\overline{B^d(0,1)}; S^d) \cap C^{\infty}(\overline{B^d(0,1)} \setminus \{0\}; S^d).
  \end{equation*}
  Then $u \in C^\infty(\overline{B^d(0,1)}; S^d)$ and it is biharmonic.
\end{theorem}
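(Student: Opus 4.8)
The plan is to establish smoothness across the origin by a removable-singularity/bootstrap argument, starting from a careful analysis of the asymptotic behaviour of $\psi$ near $r = 0$. Since $\psi(0) = 0$ and $\psi \in C^\infty((0,1])$, the first task is to understand the rate at which $\psi(r) \to 0$. Passing to the autonomous variable $\phi(s) = \psi(e^s)$, the condition becomes $\phi(s) \to 0$ as $s \to -\infty$, so I would linearise \eqref{eq:FullODE} about the rest point $(\phi, \phi', \phi'', \phi^{(3)}) = 0$ and compute the characteristic exponents of the resulting constant-coefficient fourth-order ODE. For $d \in \{5,6,7\}$ the coefficient $(d-1)\cos(2\phi) - (d-11)d - 21$ is strictly positive at $\phi = 0$ (equal to $2(d-1) - (d-11)d - 21 = -d^2 + 13d - 23$, e.g.\ $17$ when $d=5$), and this positivity is exactly what makes the linearisation hyperbolic with a well-behaved stable manifold; this is presumably where the restriction $d \le 7$ enters. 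The upshot should be that any solution decaying at $-\infty$ decays exponentially, i.e.\ $\phi(s) = O(e^{\lambda s})$ for the smallest positive characteristic root $\lambda$, which translates to $\psi(r) = O(r^\lambda)$ as $r \to 0^+$, and similarly for derivatives: $\psi'(r) = O(r^{\lambda-1})$, etc. I expect $\lambda \ge 1$ (in fact one should check $\lambda \ge 2$ for the relevant decaying solution, or at least identify the admissible exponents), so that $u = \Upsilon(\psi)$ is already Lipschitz, and more, near the origin.

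Next I would promote this to genuine regularity of $u$ as a map on $B^d(0,1)$. The key point is that $\Upsilon$ intertwines the radial profile with the ambient map: if $\psi(r) \sim c\, r^\lambda$ with $\lambda$ a positive integer (or if the admissible exponents are all integers $\ge 1$), then $u(x) = (\tfrac{x}{|x|}\sin\psi(|x|), \cos\psi(|x|))$ is smooth because $\tfrac{x}{|x|}\sin\psi(|x|)$ and $\cos\psi(|x|)$ extend to smooth functions of $x$ near $0$ — one uses that $\sin\psi(r)/r$ and $(1-\cos\psi(r))/r^2$ have the right parity and decay. If the characteristic exponents are not all integers, I would instead run an iterative improvement: feed the current decay rate of $\psi$ and its derivatives into the right-hand side of \eqref{eq:PsiProblem}, observe that every term on the RHS is then controlled (the potentially singular factors $r^{-1}, r^{-2}, r^{-3}, r^{-4}$ being compensated by the powers gained from $\psi$ and its derivatives vanishing at $0$), and conclude via ODE regularity that $\psi^{(4)}$, hence $\psi$, gains regularity/decay; iterating shows $\psi \in C^\infty([0,1])$ with $\psi$ vanishing to infinite order... except that is too strong, so more precisely one shows $u$ itself (not $\psi$) is smooth by checking that each Euclidean derivative $D^\alpha u(x)$ extends continuously to $x = 0$.

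With $u \in C^\infty(\overline{B^d(0,1)})$ in hand, it remains to verify $u$ is biharmonic, i.e.\ that it satisfies \eqref{eq:BiEulerLagrange} in the distributional sense across the origin. Away from the origin this holds by construction, since \eqref{eq:PsiProblem} was derived precisely by substituting $u = \Upsilon(\psi)$ into \eqref{eq:BiEulerLagrange}. The only issue is whether the distribution $\Delta^2 u$ picks up a singular contribution supported at $\{0\}$. Since $u \in C^4$ near the origin (a consequence of the previous step) and $\{0\}$ has zero capacity / is a removable set for $\Delta^2$, the pointwise identity $\Delta^2 u \perp \cT_u S^d$ on $B^d \setminus \{0\}$ extends to all of $B^d$; concretely, $\Delta^2 u \in C^0$ is a genuine function, so testing against $\varphi \in C_c^\infty(B^d; \R^{d+1})$ and using dominated convergence on $B^d \setminus B^d(0,\eps)$ as $\eps \to 0$ (the boundary terms on $\p B^d(0,\eps)$ vanishing because $u$ and its derivatives up to order three are bounded there) gives the distributional Euler–Lagrange equation on all of $B^d(0,1)$, and the boundary conditions on $\p B^d(0,1)$ are unaffected.

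The main obstacle will be the first step: pinning down the precise asymptotics of solutions of the fourth-order ODE \eqref{eq:PsiProblem} near $r=0$, in particular showing that the decaying solution decays fast enough (with a good enough exponent, and with no logarithmic corrections that would obstruct smoothness) that $u = \Upsilon(\psi)$ is $C^\infty$ across the origin. This is a stable-manifold analysis for a fourth-order system with a regular singular point, and controlling the interaction of the four characteristic exponents — and ruling out resonances that could force $\log r$ terms into the expansion — is the delicate part; this is also, I expect, where the dimension restriction $d \in \{5,6,7\}$ is genuinely used, via the sign of the $\phi''$-coefficient at $\phi = 0$.
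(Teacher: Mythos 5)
Your route --- derive sharp asymptotics of $\psi$ at $r=0$, conclude from them that $u$ is $C^\infty$ across the origin, and only then remove the point singularity in the Euler--Lagrange equation --- is genuinely different from the paper's, and it has two real gaps. First, you take for granted that $\phi(s)=\psi(e^s)\to 0$ as $s\to-\infty$ already forces the orbit onto the unstable manifold with exponential decay of all derivatives. The hypothesis controls only $\phi$, not $\phi',\phi'',\phi^{(3)}$, and establishing this implication is precisely the content of \Cref{thm:BiharmonicInUnstable}, which occupies Section~3 of the paper (via \Cref{lem:UnstableSetup} and \Cref{lem:UnstableSetup2}); that is also where the restriction $d\in\{5,6,7\}$ actually enters (one needs $\alpha=2(d-4)>0$, so $d\geq 5$, and the uniform positivity $-d^2+10d-20>0$, so $d\leq 7$), not through hyperbolicity of the linearisation at the origin, which holds with eigenvalues $3,\,1,\,1-d,\,3-d$ for all relevant $d$ (also note $q(0)=-d^2+12d-22$, e.g.\ $13$ for $d=5$, not $-d^2+13d-23$).

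Second, and more seriously, the step you yourself call ``the main obstacle'' is exactly the step your argument cannot do without. The unstable eigenvalues are $1$ and $3$, and the cubic nonlinearities in \eqref{eq:FullODE} make the $e^{s}$ mode resonate with the $e^{3s}$ mode, so obtaining smoothness of $u$ at $0$ by pure ODE asymptotics requires ruling out $r^3\log r$-type corrections in $\psi$ (which would already destroy $C^3$ regularity of $u$ at the origin); you propose no mechanism for this. The paper sidesteps the issue entirely: from the local unstable-manifold parametrisation it extracts only $\psi\in C^2([0,1])$ with $\psi(r)=\psi'(0)\,r+\cO(r^{3-3\eps})$ (\Cref{lem:uSobolev}), which suffices for $\Delta u\in L^\infty$ and hence $u\in W^{2,p}$ for every $p$; it then shows $u$ is \emph{weakly} biharmonic by a cutoff argument in the first variation of $E_2\circ\Pi$ (the near-origin contribution is $o(1)$ as the cutoff radius shrinks), and finally invokes the regularity theory for continuous weakly biharmonic maps~\cite{CWY99} to upgrade to $C^\infty$. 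So either carry out the resonance analysis and prove the expansion of $\psi$ at $0$ is free of logarithms (hard, and not done in your proposal), or restructure as in the paper: modest Sobolev regularity plus continuity, a weak formulation across the origin, and then the known biharmonic-map regularity theory does the smoothing.
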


The following theorem characterises equivariant biharmonic maps from $\R^5$ into $S^5$.

\begin{theorem}\label{thm:EntireSolutionsAreHeteroclinic}
  Let $d = 5$ and $\psi \in C([0,\infty); \R) \cap C^{\infty}((0,\infty); \R)$ be a non-trivial solution to~\eqref{eq:PsiProblem}, with $\psi(0) = 0$.
  When we consider $\phi(s) = \psi(e^s)$ which solves~\eqref{eq:5dODE}, $\phi$ is a heteroclinic orbit connecting the origin and either $(-\pi/2, 0, 0, 0)$ or $(\pi/2, 0, 0, 0)$.
  Moreover, such a $\psi$ exists.
\end{theorem}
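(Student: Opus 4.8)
The plan is to split the statement into two parts: (i) every non-trivial entire solution is a heteroclinic orbit connecting $0$ to $(\pm\pi/2,0,0,0)$, and (ii) such a solution exists. For part (i), I would first set up the dynamical system for $\Phi = (\phi,\phi',\phi'',\phi^{(3)})$ associated to~\eqref{eq:5dODE} and identify its equilibria: these are exactly the points $(k\pi/2, 0, 0, 0)$ with $k \in \Z$. Since $\psi(0)=0$ translates to $\phi(s)\to 0$ as $s\to-\infty$, the orbit emanates from the origin; the key is to control its $\omega$-limit set as $s\to\infty$. The natural tool is a Lyapunov/energy functional: I expect the bi-energy $E_2$ restricted to the equivariant ansatz, written in the $s$ variable, to give a functional $\Energy[\phi]$ that is monotone along the flow, with $\p_s\Energy$ a definite-sign quadratic form in the derivatives of $\phi$ (this is the supercriticality being used, analogous to the friction term $-(d-2)(\phi_H')^2$ in the harmonic case). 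Monotonicity of $\Energy$ plus a LaSalle-type argument forces the $\omega$-limit set to consist of equilibria on which $\p_s\Energy$ vanishes, i.e.\ equilibria. One then needs: (a) the orbit is bounded (so the $\omega$-limit set is nonempty and compact), and (b) it cannot limit to $0$ again or to $(\pm\pi,0,0,0)$ or further-out equilibria — the first by non-triviality (a homoclinic orbit at $0$ would have to be excluded, likely via the energy identity forcing it to be constant), the latter by an a priori bound $|\phi| < \pi$ obtained from the energy functional exactly as in the harmonic discussion, showing $\phi$ cannot pass the south pole.

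For part (ii), existence, I would use a shooting argument. The local behaviour near $s = -\infty$ (equivalently near $r=0$) is governed by the linearisation of~\eqref{eq:5dODE} at the origin; its eigenvalues determine a stable/unstable manifold, and the condition $\phi(s)\to 0$ as $s\to-\infty$ picks out a finite-dimensional family of solutions — parametrised, after using the scaling/translation symmetry in $s$, by essentially one real shooting parameter (a direction or an amplitude on the relevant invariant manifold). I would then track, as a function of this parameter, the first value of $s$ (if any) at which $\phi$ reaches $\pi/2$, or the behaviour of $\phi$ for large $s$, and run an intermediate-value / connectedness argument: for one range of the parameter the solution overshoots (crosses $\pi/2$ with positive velocity, or escapes), for another it undershoots (stays below and, by the energy monotonicity of part (i), must then converge to $0$, contradicting non-triviality, or oscillates), and continuity of the flow in the parameter yields an intermediate value whose orbit is trapped and, by the LaSalle argument of part (i), converges precisely to $(\pi/2,0,0,0)$. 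Smoothness of the resulting $\psi$ up to and including the origin is then supplied by Theorem~\ref{thm:SmoothBiharmonicMap}.

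The main obstacle I anticipate is establishing the right monotone energy functional and, relatedly, the a priori bound $|\phi| < \pi$: unlike the second-order harmonic equation~\eqref{eq:HarmonicPhi}, the fourth-order equation~\eqref{eq:5dODE} is not obviously the gradient flow of a simple mechanical energy, and the dissipation term $-2\phi^{(3)}$ together with the $(\phi')^3$ and $\phi''(\phi')^2$ nonlinearities make it delicate to find a functional whose $s$-derivative has a definite sign. I would expect this to require either a clever combination (the reduced bi-energy plus correction terms), or a more hands-on argument ruling out $|\phi|$ reaching $\pi$ via analysis of the trajectory near the putative limiting equilibrium. A secondary difficulty is the compactness/boundedness of the shooting trajectories needed to make the intermediate-value argument rigorous, which again leans on good a priori estimates from the energy functional. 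Once the functional is in hand, the LaSalle invariance argument and the shooting construction are comparatively standard.
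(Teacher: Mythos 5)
There is a genuine gap, and it is exactly at the point you flag as your ``main obstacle'': the a priori bound $|\phi|<\pi$ does \emph{not} come from the energy functional, and the pendulum-with-friction argument from the harmonic case cannot be transplanted. The monotone quantity for~\eqref{eq:5dODE} (the paper's $\Energy$ in~\eqref{eq:EnergyDefn}) is monotone \emph{non-decreasing} in $s$ for $d=5$ by~\eqref{eq:EnergyRateOfChange}, so there is no dissipation-based potential barrier at $\phi=\pm\pi$; indeed \Cref{thm:InfiniteWrapAround} exhibits an equivariant biharmonic map whose profile winds around infinitely often, so no such universal bound exists. What actually confines a non-trivial \emph{entire} solution is a finite-time blowup dichotomy: one shows that orbits in $W^u(0)$ whose $(\phi,\phi'')$-trace exits the region $\cC\cup-\cC$ of~\eqref{eq:TheMiddleRegion}, or whose $|\phi''|$ ever reaches $2\sqrt6$, blow up in finite $s$-time (\Cref{thm:BiharmBlowup}, \Cref{cor:ExitMiddleBlowup}, \Cref{thm:BlowupOrExistsForever}); since an entire solution exists for all $s\in\R$, it must stay in $\cC\cup-\cC$, which is what forces $|\phi|<\pi$, boundedness, and then (via the increasing energy and a LaSalle-type limit argument as in \Cref{lem:LimitWhenBoundedForAllTime}) convergence to $(\pm\pi/2,0,0,0)$. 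This blowup machinery --- invariant cones, the second-order decomposition, and computer-assisted positivity of the coefficients (\Cref{lem:CoeffBoundsOnWODE}, \Cref{lem:PositiveInvariantCone}) --- is the technical core of the theorem and is absent from your sketch; your items (a) boundedness and (b) exclusion of further-out equilibria cannot be obtained from $\Energy$ alone. A secondary gap: the statement that $\phi(s)\to0$ as $s\to-\infty$ ``picks out'' the unstable manifold also requires proof that all derivatives converge (\Cref{thm:BiharmonicInUnstable}); this is not automatic for a fourth-order equation.

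Your shooting idea for existence is close in spirit to the paper's argument, but its rigorous implementation again leans on the blowup results rather than on an overshoot/undershoot alternative at $\phi=\pi/2$ (crossing $\pi/2$ is not a terminal event for a fourth-order system, and ``oscillation'' is not easy to rule out directly). The paper parametrises a small circle of initial data in $W^u(0)$ by $\theta$, lets $\tau(\theta)$ be the first time $|\phi_\theta''|$ reaches $2\sqrt6$, and uses the discrete invariant $g(\theta)=\sgn(\phi_\theta''(\tau(\theta)))$: it is continuous precisely because \Cref{thm:BiharmBlowup} makes the event stable, and it takes opposite values at the two ends of the parameter range by the exit analyses \Cref{lem:LeftSideExit} and \Cref{lem:RightSideExit}, giving a contradiction unless some orbit never leaves $\cC$; that orbit is the heteroclinic one. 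So the connectedness skeleton you propose is right, but without the blowup dichotomy and the exit lemmas the intermediate-value argument has no well-defined, continuous quantity to apply to.
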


With~\cite[Theorem 3]{MKC15} the author proves an analogue of this result in the critical, that is, $d = 4$ case.
However, the result in~\cite{MKC15} is stronger, because it proves that the heteroclinic orbit is unique, up to symmetries of the problem.
The proof of this uniqueness rests on two main facts about~\eqref{eq:FullODE} when $d = 4$.
Firstly, we have an explicit expression for a heteroclinic orbit.
Secondly, this explicit expression makes some key terms in~\eqref{eq:FullODE} vanish.
From this a comparison principle between this explicit orbit and other orbits in $W^u(0)$ follow, where $W^u(0)$ is the unstable manifold of the origin of the associated first-order system in $(\phi, \phi', \phi'', \phi^{(3)})$.
A consequence of the comparison principle is that this explicit orbit repels every other orbit in $W^u(0)$ such that these other orbits eventually blow up in finite $s$-time.

Unfortunately, in the $d = 5$ case we are unable prove the uniqueness, up to symmetry, of the heteroclinic orbit.
The two mains facts that our proof in the $d = 4$ case rest upon are not true in the $d = 5$ case.
However, numerical studies give a strong suggestion that indeed the heteroclinic orbit is unique up to the symmetries of our problem.

Finally, we show that, in stark contrast to the harmonic case, there are equivariant biharmonic maps from $B^5(0,1)$ into $S^5$ that wind around $S^5$ infinitely many times.

\begin{theorem}\label{thm:InfiniteWrapAround}
  There exists a biharmonic map
  \begin{equation*}
    u = \Upsilon(\psi) \in C^{\infty}(B^5(0, 1); S^5),
  \end{equation*}
  such that $\psi(0) = 0$ and $\psi(r) \rightarrow \infty$ as $r \nearrow 1$.
\end{theorem}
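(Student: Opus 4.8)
The plan is to produce the map as $u=\Upsilon(\psi)$ with $\psi(r)=\phi(\log r)$, where $\phi$ solves~\eqref{eq:5dODE} on a half‑line $(-\infty,s_{0})$, lies in the unstable manifold of the origin (so $\phi(s)\to0$ as $s\to-\infty$, i.e.\ $\psi(0)=0$), and blows up with $\phi(s)\to+\infty$ as $s\nearrow s_{0}$. Since~\eqref{eq:5dODE} is autonomous and~\eqref{eq:PsiProblem} is dilation invariant, one may translate $s$ so that $s_{0}=0$; then $\psi\in C([0,1))\cap C^{\infty}((0,1);\R)$ solves~\eqref{eq:PsiProblem} on $(0,1)$ with $\psi(0)=0$ and $\psi(r)\to+\infty$ as $r\nearrow1$. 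For each $R<1$ the rescaled function $\psi(R\,\cdot)$ satisfies the hypotheses of Theorem~\ref{thm:SmoothBiharmonicMap}, so $u\in C^{\infty}(\overline{B^{5}(0,R)};S^{5})$ and is biharmonic there; letting $R\nearrow1$ gives $u\in C^{\infty}(B^{5}(0,1);S^{5})$, biharmonic, as required. Everything therefore reduces to constructing such an orbit $\phi$.

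Regard~\eqref{eq:5dODE} as a flow on $\R^{4}$ in the variable $X=(\phi,\phi',\phi'',\phi^{(3)})$; it is invariant under $\phi\mapsto\phi+\pi$ and $\phi\mapsto-\phi$. Linearising at the origin gives the characteristic polynomial $\lambda^{4}+2\lambda^{3}-13\lambda^{2}-14\lambda+24=(\lambda-1)(\lambda-3)(\lambda+2)(\lambda+4)$, so the unstable manifold $W^{u}(0)$ is two‑dimensional; modulo the flow and the reflection, its orbits form a one‑parameter family $\{\phi_{\theta}\}$, which I would normalise so that each $\phi_{\theta}$ leaves $0$ with $\phi'>0$. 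By Theorem~\ref{thm:EntireSolutionsAreHeteroclinic} one member $\phi_{\theta^{*}}$ is the heteroclinic orbit onto $(\pi/2,0,0,0)$, and the goal is to find $\theta$ for which $\phi_{\theta}$ blows up in finite $s$‑time with $\phi_{\theta}\to+\infty$.

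The engine of the blow‑up is a forward‑invariant cone. I would check directly from~\eqref{eq:5dODE}, using only $\cos2\phi\ge-1$ and $\sin2\phi\le1$, that
\begin{equation*}
  \mathcal{R}:=\{\,(\phi,\phi',\phi'',\phi^{(3)})\in\R^{4}\: \phi'\ge2,\ \phi''\ge1,\ \phi^{(3)}\ge1\,\}
\end{equation*}
is forward invariant: on the face $\phi^{(3)}=1$ one gets $\phi^{(4)}\ge5\phi''+2(\phi')^{2}+6\phi'+2(\phi')^{3}-14>0$, while on $\phi''=1$ and on $\phi'=2$ one has $\phi^{(3)}\ge1>0$ and $\phi''\ge1>0$. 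In $\mathcal{R}$ all of $\phi',\phi'',\phi^{(3)}$ are non‑decreasing and $\phi'(s)\ge\phi'(s_{1})+(s-s_{1})$, so $\phi\to+\infty$; moreover, writing $(6\phi''-4\sin2\phi)(\phi')^{2}\ge6\phi''(\phi')^{2}-4(\phi')^{2}$ and absorbing the remaining (positive) lower‑order terms yields $\phi^{(4)}+2\phi^{(3)}\ge6\phi''(\phi')^{2}=2\tfrac{d}{ds}\big((\phi')^{3}\big)$, hence after integration $\phi^{(3)}+2\phi''\ge2(\phi')^{3}-C$. With $u=\phi'$ this becomes $u''+2u'\ge u^{3}$ eventually, $u$ being positive, convex and increasing; a short argument tracking $g=\phi''/(\phi')^{2}$ (which is forced up to a positive constant, whence $\phi'\sim(s_{0}-s)^{-1}$ and $\phi\sim-\log(s_{0}-s)$) shows $u$, and therefore $\phi$, blows up at a finite time $s_{0}$. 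Thus it suffices to show that some $\phi_{\theta}$ enters $\mathcal{R}$.

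This last step is where the hard part lies. I would feed $\mathcal{R}$ from the one‑dimensional unstable manifold $W^{u}((\pi/2,0,0,0))$, whose unstable eigenvalue $\lambda_{*}$ lies in $(2,3)$ and along whose ``upward'' branch one has, near $(\pi/2,0,0,0)$, $\phi>\pi/2$ with $\phi',\phi'',\phi^{(3)}$ small and positive. On the interval $\phi\in[\pi/2,\pi]$ one has $\sin2\phi\le0$, so in~\eqref{eq:5dODE} every term except $-2\phi^{(3)}$ is non‑negative on the octant $\{\phi',\phi'',\phi^{(3)}\ge0\}$; in particular $\tfrac{d}{ds}(e^{2s}\phi^{(3)})\ge0$ keeps $\phi^{(3)}>0$, so $\phi''$ and then $\phi'$ increase and the branch sweeps monotonically across $[\pi/2,\pi]$. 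The technical heart is a quantitative estimate on this cooperative interval (or, failing that, on a later interval $[\pi/2+k\pi,(k+1)\pi]$, using the $\phi\mapsto\phi+\pi$ symmetry) showing that $\phi'$ is driven above $2$ and $\phi'',\phi^{(3)}$ above $1$ before $\phi$ leaves it, i.e.\ that the branch enters $\mathcal{R}$; the relation $\phi'\approx\lambda_{*}(\phi-\pi/2)$ near $(\pi/2,0,0,0)$ indicates the threshold is comfortably met, but making this rigorous on the whole interval is the obstacle, and presumably also where the analogous argument for $d\in\{6,7\}$ becomes harder. Granting it, a $\lambda$‑lemma/shadowing argument shows that for $\theta$ just past $\theta^{*}$ on the ``overshoot'' side, $\phi_{\theta}$ first shadows $\phi_{\theta^{*}}$ into a neighbourhood of $(\pi/2,0,0,0)$ and then tracks the upward branch of $W^{u}((\pi/2,0,0,0))$; since $\mathcal{R}$ is open and forward invariant, $\phi_{\theta}$ too enters $\mathcal{R}$, and this $\phi_{\theta}$ is the desired orbit. (If the proof of Theorem~\ref{thm:EntireSolutionsAreHeteroclinic} already exhibits $\phi_{\theta^{*}}$ as flanked in $\theta$ by orbits that blow up in finite $s$‑time to $\pm\infty$, one simply takes such an orbit on the $+\infty$ side.)
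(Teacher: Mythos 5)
Your reduction to an ODE statement is exactly the right frame, and it coincides with the paper's: once one has an orbit $\phi$ of~\eqref{eq:5dODE} in $W^u(0)$ blowing up in finite $s$-time with $\phi\to+\infty$, translating so the blow-up time is $0$, setting $\psi(r)=\phi(\log r)$, and combining dilation invariance with \Cref{thm:SmoothBiharmonicMap} gives the theorem; that part of your write-up is fine. The genuine gap is in producing the orbit. Your construction hinges on showing that some orbit of $W^u(0)$ actually enters the forward-invariant cone $\mathcal{R}$, and you propose to do this by following the one-dimensional unstable manifold of $(\pi/2,0,0,0)$ across the ``cooperative'' interval $[\pi/2,\pi]$ and then invoking a $\lambda$-lemma/shadowing argument for $\theta$ just past $\theta^\ast$. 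You yourself flag the quantitative estimate on $[\pi/2,\pi]$ (that $\phi'$ exceeds $2$ and $\phi'',\phi^{(3)}$ exceed $1$ before $\phi$ reaches $\pi$) as ``the obstacle'', and it is indeed not proved. The fallback to a later window $[\pi/2+k\pi,(k+1)\pi]$ is not automatic either: on $[\pi,3\pi/2]$ the sign of $\sin(2\phi)$ flips, the octant $\{\phi',\phi'',\phi^{(3)}\ge 0\}$ is no longer invariant, and you have no control of the orbit in between. The shadowing step moreover needs the family $\phi_\theta$ to cross $W^s((\pi/2,0,0,0))$ transversally at $\theta^\ast$, which is nowhere established. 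So the proposal is incomplete precisely at its central step (the finite-time blow-up asymptotics inside $\mathcal{R}$, via $g=\phi''/(\phi')^2$, are also only sketched, but that part is routine to repair and the paper proves an analogue in \Cref{lem:ODEBlowup}).

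For comparison, the paper needs none of this saddle-to-saddle analysis: by the time of \Cref{thm:InfiniteWrapAround} it has the dichotomy of \Cref{thm:BlowupOrExistsForever} (built in \Cref{sect:EventualBlowup}, \Cref{sect:ExitingC}, and \Cref{sect:HeteroOrbitsExist}): every non-trivial orbit in $W^u(0)$ of~\eqref{eq:5dODE} either keeps $(\phi,\phi'')$ in $\cC\cup-\cC$ for all time, or reaches $|\phi''|=2\sqrt{6}$ and then blows up in finite time with \emph{all} of $\phi,\phi',\phi'',\phi^{(3)}$ tending to $+\infty$ (or all to $-\infty$). Since $W^u(0)$ is locally a graph over the $(\phi,\phi'')$-plane by~\eqref{eq:W0LocalGraphParam}, one simply starts an orbit at a point of $W^u(0)$ whose $(\phi,\phi'')$-coordinates lie outside $\overline{\cC\cup-\cC}$; the first alternative is then excluded, the orbit blows up with a definite sign, and the reflection $\phi\mapsto-\phi$ fixes the sign to be $+$. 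Your closing parenthetical remark (take a blow-up orbit flanking the heteroclinic) is in effect this argument, but as written it is conditional on facts you have not proved; to close the gap you would need either your quantitative estimate on $[\pi/2,\pi]$ together with the transversality required for shadowing, or a dichotomy of the above type.
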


We believe that another strength of this work is its contribution to the theory of fourth-order ODE\@.
It extends the geometric approach of Hofer and Toland in~\cite{HoferTolandHamiltonian} to an equation having features far beyond what current general theory covers.

In addition to the work in~\cite{HoferTolandHamiltonian}, our approach is deeply inspired by the work of van den Berg in~\cite{vdBergFisherKolmogorov}.
More explicitly, we discover positive invariant cones on which we can write~\eqref{eq:5dODE} as a system of two second-order ordinary differential inequalities whose dynamics are simpler to study.
The crux in showing these simplified dynamics lies in showing the non-negativity of certain functions.
Parts of these proofs, and only these proofs, are computer-assisted.
More precisely, we use interval analysis, see, for example,~\cite{AlefeldHerzbergerInterval},~\cite{KulischMirankerInterval},~\cite{Moore66Interval}, or~\cite{Moore79Interval}.
Although similar ideas appeared in the author's earlier work~\cite{MKC15}, a reading of~\cite{vdBergFisherKolmogorov} really sharpened the use of such ideas in this work.

We will now outline the structure of the rest of this paper.
In \Cref{sect:Prelims}, we go over some preliminaries, including, a description of a monotone quantity for~\eqref{eq:FullODE}.
We also describe the critical points, and linearisations around these, of~\eqref{eq:FullODE}.
In \Cref{sect:BiharmonicInUnstable}, we show that when $d \in \{5, 6, 7\}$ and $\phi$ is a solution to~\eqref{eq:FullODE} such that $\phi(s) \rightarrow 0$ as $s \rightarrow -\infty$, then $\phi$ is in $W^u(0)$.
In \Cref{sect:EventualBlowup}, we show for $d \in \{5, 6, 7\}$ and solutions $\phi$ to~\eqref{eq:FullODE} that are in $W^u(0)$ that if $|\phi''|$ becomes sufficiently large then $\phi$ blows up in finite $s$-time, that is, there exists a $s_f \in \R$ such that $|(\phi(s), \phi'(s), \phi''(s), \phi^{(3)}(s))| \rightarrow \infty$ as $s \nearrow s_f$.
In \Cref{sect:ExitingC}, we introduce a set which plays a similar role for us as the role played in~\cite{HoferTolandHamiltonian} by the connected components of the set of $(\phi, \phi'')$ that have positive potential energy.
We show that if an orbit in $W^u(0)$ of~\eqref{eq:5dODE} exits this set then it must blowup in finite $s$-time.
In \Cref{sect:HeteroOrbitsExist}, we complete the proof of \Cref{thm:EntireSolutionsAreHeteroclinic}.
In \Cref{sect:GivesSmoothBiharmonicMaps}, we prove \Cref{thm:SmoothBiharmonicMap} and \Cref{thm:InfiniteWrapAround}.

{\bf Notation.}
Throughout this paper $C$ denotes a positive universal constant.
Two different occurrences of $C$ are liable to be different.
If our constant depends on some parameter, say $\eps$, then we may indicate this by writing $C(\eps)$.

For $\Omega \subseteq \R^d$, we define $-\Omega := \{ x \in \R^d \: -x \in \Omega \}$.

{\bf Acknowledgements.}
The author is grateful to Prof.\ Dr.\ Andreas Gastel for many stimulating discussions during the initial stages of this project.

\section{Preliminaries}\label{sect:Prelims}
Later in this work we will want to refer to certain sub-expressions of~\eqref{eq:FullODE}.
Therefore, we rewrite~\eqref{eq:FullODE} as
\begin{equation}\label{eq:FullAbstractODE}
  \phi^{(4)} = q(\phi) \phi'' - f(\phi)
  + \left(6 \phi'' + \frac{1}{2} q'(\phi)\right) (\phi')^2
  +
  2(d - 4)
  \left(
  g(\phi) \phi'
  + (\phi')^3
  - \phi^{(3)}
  \right),
\end{equation}
where
\begin{equation}\label{eq:FullAbstractODESpecificSubstitutions}
  \left\{
    \begin{aligned}
      q(\phi) &= (d-1) \cos(2 \phi) - (d-11) d - 21,
      \\
      f(\phi) &= \frac{3}{2} (d-3) (d-1) \sin(2 \phi), \text{ and}
      \\
      g(\phi) &= \frac{1}{2} ((d-1) \cos (2 \phi)+3 d-5).
    \end{aligned}
  \right.
\end{equation}

Next, we derive a monotone quantity for solutions of~\eqref{eq:FullODE}.
This quantity plays a central role in our analysis.
We rearrange~\eqref{eq:FullAbstractODE}:
\begin{equation}\label{eq:EnergyDerivation1}
  \begin{aligned}
    &
    \phi^{(4)}
    - q(\phi) \phi'' + f(\phi)
    - 6 (\phi')^2 \phi''
    - \frac{1}{2} (\phi')^2 q'(\phi)
    + 2(d - 4) \phi^{(3)}
    \\
    &\quad
    =
    2(d - 4) g(\phi) \phi'
    + 2(d - 4) (\phi')^3.
  \end{aligned}
\end{equation}
We multiply this through by $\phi'$, and integrate the left hand side:
\begin{equation}\label{eq:EnergyComputationIntegral}
  \begin{aligned}
    \int \phi' \, \text{LHS} \dd{s}
    &=
    \phi' \phi^{(3)} - \frac{1}{2} (\phi'')^2 + F(\phi) - \frac{3}{2} (\phi')^4
    + 2(d - 4) \phi' \phi''
    \\
    &\quad
    - \int 2(d - 4) \, (\phi'')^2 \dd{s}
    - \frac{1}{2} q(\phi) (\phi')^2,
  \end{aligned}
\end{equation}
where $F' = f$.

We choose $F$ so that $F(0) = 0$, and hence
\begin{equation}\label{eq:PotentialBiharmonicODE}
  F(\phi) = \frac{3}{2} (d-3) (d-1) \sin^2\phi.
\end{equation}
We set
\begin{equation}\label{eq:EnergyDefn}
  \Energy[\phi]
  :=
  \phi' \left(\phi^{(3)} + 2(d - 4) \phi'' - \frac{1}{2} q(\phi) \phi' - \frac{3}{2} (\phi')^3\right)
  +
  F(\phi) - \frac{1}{2} (\phi'')^2.
\end{equation}
We call this \emph{the energy} of $\phi$, and we decompose this into the sum of $T$, the kinetic energy, and $U$, the potential energy:
\begin{equation}\label{eq:EnergySplit}
  \left\{
    \begin{aligned}
      T[\phi] &:= \phi' \left(\phi^{(3)} + 2(d - 4) \phi'' - \frac{1}{2} q(\phi) \phi' - \frac{3}{2} (\phi')^3\right) \text{ and}
      \\
      U[\phi] &:= F(\phi) - \frac{1}{2} (\phi'')^2.
    \end{aligned}
  \right.
\end{equation}
Finishing the computation, we combine the definition of $\Energy$ with~\eqref{eq:EnergyDerivation1} and~\eqref{eq:EnergyComputationIntegral}:
\begin{equation*}
  \p_s \Energy[\phi] = 2(d - 4) \left( (\phi'')^2 + g(\phi) (\phi')^2 + (\phi')^4 \right).
\end{equation*}
More explicitly, we have
\begin{equation}\label{eq:EnergyRateOfChange}
  \p_s \Energy[\phi] =
  (d-4) \left(2 (\phi'')^2 + ((d - 1) \cos(2 \phi) + 3d - 5) (\phi')^2 + 2 (\phi')^4\right).
\end{equation}
Hence for $d = 4$ the energy is a conserved quantity, and for $d > 4$ it is monotone non-decreasing.

There are some basic symmetries of~\eqref{eq:FullODE} and~\eqref{eq:EnergyDefn} that we exploit.
If $\phi$ solves~\eqref{eq:FullODE}, then for any $k \in \Z$ both $s \mapsto \phi(s) + \pi k$ and $s \mapsto \pi k - \phi(s)$ are also solutions.
Furthermore $\Energy[\phi] = \Energy[\phi + k\pi] = \Energy[k\pi - \phi]$.

Next, we consider the critical points, and linearisations around these, of~\eqref{eq:FullODE}.
We substitute $\phi^{(i)} = 0$, for $i \in \{1, 2, 3, 4\}$, into~\eqref{eq:FullODE}:
\begin{equation*}
  0 = (d-3) (d-1) \sin(2\phi(s)).
\end{equation*}
Ignoring the cases $d = 1, 3$, we see that critical points occur when $\phi = k\pi/2$, for $k \in \Z$.

The linearizations depend on the parity of $k$.
For $k$ \emph{even} the matrix associated to the linearization of~\eqref{eq:FullODE} written as a first-order system for $(\phi, \phi', \phi'', \phi^{(3)})$ is
\begin{equation*}
  \left(
    \begin{array}{cccc}
      0 & 1 & 0 & 0 \\
      0 & 0 & 1 & 0 \\
      0 & 0 & 0 & 1 \\
      -3 (d-3) (d-1) & 2 (d-4) (2 d-3) & -(d-12) d-22 & 8-2 d \\
    \end{array}
  \right)
\end{equation*}
This has the eigenvalues $3, 1, 1 - d, 3 - d$.
A corresponding eigenvector is $(1, \lambda, \lambda^2, \lambda^3)^T$, where $\lambda$ is the eigenvalue.

For $k$ \emph{odd} the matrix associated to the linearization of~\eqref{eq:FullODE} written as a first-order system for $(\phi, \phi', \phi'', \phi^{(3)})$ is
\begin{equation*}
  \left(
    \begin{array}{cccc}
      0 & 1 & 0 & 0 \\
      0 & 0 & 1 & 0 \\
      0 & 0 & 0 & 1 \\
      3 (d-3) (d-1) & 2 (d-4) (d-2) & -(d-10) d-20 & 8-2 d \\
    \end{array}
  \right)
\end{equation*}
The eigensystem in this case is not as simple as when $k$ is even.
Since we do not need to know more about this case for our analysis, we will not go into more detail.

\section{Orbits are in \texorpdfstring{$W^u(0)$}{Wu(0)}}\label{sect:BiharmonicInUnstable}
In this section we show that we may restrict our attention to the unstable manifold of the origin of~\eqref{eq:FullODE} for $d \in \{5, 6, 7\}$.

\begin{theorem}\label{thm:BiharmonicInUnstable}
  Let $s_0 \in \R$, $\phi \in C^\infty((-\infty, s_0); \R)$, and $d \in \{5, 6, 7\}$.
  Suppose $\phi$ solves~\eqref{eq:FullODE}, and $\phi(s) \rightarrow 0$ as $s \rightarrow -\infty$.
  Then $\phi'(s)$, $\phi''(s)$, and $\phi^{(3)}(s)$ also converge to zero as $s \rightarrow -\infty$.
\end{theorem}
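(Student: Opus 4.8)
The plan is to show that the limit $\phi(s) \to 0$ as $s \to -\infty$ forces the full trajectory $(\phi, \phi', \phi'', \phi^{(3)})(s)$ to converge to the origin as $s \to -\infty$; equivalently, $\phi$ lies on the unstable manifold $W^u(0)$. The first step is to exploit the monotone quantity $\Energy$ from~\eqref{eq:EnergyDefn}. Since $d \in \{5,6,7\}$ we have $d - 4 \geq 1 > 0$, so~\eqref{eq:EnergyRateOfChange} shows that $\p_s \Energy[\phi] \geq 0$ away from the one situation where the bracket vanishes; in particular $\Energy[\phi](s)$ is non-decreasing in $s$, hence has a (possibly $-\infty$) limit $\ell := \lim_{s\to-\infty}\Energy[\phi](s) \in [-\infty, \infty)$. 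Using $\phi(s) \to 0$, the potential term $F(\phi) \to 0$, so along the backward flow the remaining terms of $\Energy$ must stabilise. The main work is to turn this \emph{energy} control into \emph{pointwise} decay of $\phi', \phi'', \phi^{(3)}$.

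Next I would set up an integrability/ODE-asymptotics argument. Integrating~\eqref{eq:EnergyRateOfChange} over $(-\infty, s_0)$ and using that $\Energy$ is bounded above on this interval (it is non-decreasing, so bounded above by $\Energy[\phi](s_0) - F(\phi(s_0)) + \tfrac12(\phi''(s_0))^2$ minus its $-\infty$-limit — more carefully, $\Energy[\phi](s_0) - \ell < \infty$ only if $\ell > -\infty$) gives, \emph{provided} $\ell > -\infty$, that
\begin{equation*}
  \int_{-\infty}^{s_0} \left( 2(\phi'')^2 + ((d-1)\cos(2\phi) + 3d - 5)(\phi')^2 + 2(\phi')^4 \right) \dd{s} < \infty.
\end{equation*}
For $d \in \{5,6,7\}$ the coefficient $(d-1)\cos(2\phi) + 3d - 5 = 2g(\phi)$ is bounded below by $3d - 5 - (d-1) = 2d - 4 \geq 6 > 0$, so the integrand dominates $c\big((\phi'')^2 + (\phi')^2\big)$ for a constant $c > 0$; hence $\phi', \phi'' \in L^2((-\infty, s_0))$. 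Then, since $\phi(s) \to 0$ and the ODE~\eqref{eq:FullAbstractODE} expresses $\phi^{(4)}$ polynomially in $(\phi,\phi',\phi'',\phi^{(3)})$ with bounded $\phi$, a standard bootstrap — $\phi' \in L^2$ together with boundedness of $\phi''$ on compact-to-the-left windows, then interpolation — upgrades $L^2$-decay to $\phi'(s) \to 0$, and similarly cascades to $\phi''(s) \to 0$ and $\phi^{(3)}(s) \to 0$. One clean route: show $\phi'$ is uniformly continuous on $(-\infty, s_0)$ (using that $\phi''$ is bounded, which itself follows from the energy bound plus the ODE), so that $L^2$-integrability plus uniform continuity yields $\phi'(s) \to 0$ by Barbalat's lemma; then repeat the scheme for $\phi''$ (uniform continuity of $\phi''$ from boundedness of $\phi^{(3)}$) and finally read off $\phi^{(3)}(s) \to 0$ from the ODE once $\phi, \phi', \phi''$ all vanish in the limit and $\Energy$ converges.

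The main obstacle is ruling out the degenerate case $\ell = -\infty$, i.e.\ $\Energy[\phi](s) \to -\infty$ as $s \to -\infty$: then the integral bound above is vacuous and the argument collapses. Here the structure of $\Energy$ must be used directly. With $\phi \to 0$ and $F(\phi) \to 0$, writing $\Energy[\phi] = \phi'\big(\phi^{(3)} + 2(d-4)\phi'' - \tfrac12 q(\phi)\phi' - \tfrac32(\phi')^3\big) + F(\phi) - \tfrac12(\phi'')^2$, a divergence $\Energy \to -\infty$ can only come from $-\tfrac12(\phi'')^2 \to -\infty$ or the cross term $\phi'(\phi^{(3)} + 2(d-4)\phi'')$ being very negative; in either scenario $|\phi''|$ or $|\phi^{(3)}|$ is unbounded along a backward sequence. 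The plan is to show this is incompatible with $\phi \to 0$: combine a Gronwall/contradiction argument on the first-order system near the origin — the linearisation at $0$ (even $k$, from the Preliminaries) has eigenvalues $3, 1, 1-d, 3-d$, with two stable and two unstable directions, and a trajectory approaching $0$ backward-in-time must align with the (backward-)contracting directions — with the monotonicity of $\Energy$ to trap the orbit in a neighbourhood of the origin where the unstable-manifold theorem applies. Concretely, one shows that if $\phi \to 0$ but the trajectory does not converge to $0$, then $\liminf |\phi''|>0$ along some sequence $s_j \to -\infty$, and since $\Energy$ is monotone one can run the flow forward from near such $s_j$ to contradict either the boundedness forced by $\phi(s)\to 0$ or the sign of $\p_s\Energy$. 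This localisation-plus-linearisation step, making the informal ``orbit must enter the unstable manifold'' precise, is where the real effort lies; everything after that is the Barbalat-type cascade described above.
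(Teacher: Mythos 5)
Your route (energy monotonicity, backward integrability of $\p_s\Energy$, then a Barbalat-type cascade) is genuinely different from the paper's, which does not use $\Energy$ at all for this theorem, but it has a real gap precisely at the step you flag as ``where the real effort lies''. Even in the favourable case $\ell > -\infty$, the Barbalat argument needs $\phi'$ (and later $\phi''$) to be uniformly continuous, i.e.\ a priori bounds on $\phi''$ and $\phi^{(3)}$ on $(-\infty, s_0)$. You assert these follow ``from the energy bound plus the ODE'', but they do not: in $\Energy = T + U$ the kinetic term $\phi'\bigl(\phi^{(3)} + 2(d-4)\phi'' - \tfrac12 q(\phi)\phi' - \tfrac32(\phi')^3\bigr)$ can be large and positive while $-\tfrac12(\phi'')^2$ is large and negative, so boundedness of $\Energy$ together with $F(\phi)\to 0$ puts no pointwise bound on $\phi''$ or $\phi^{(3)}$, and $\phi''\in L^2$ does not give boundedness either.

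The degenerate case $\ell = -\infty$ (equivalently, derivatives unbounded along some $s_j \to -\infty$) is the actual content of the theorem, and your plan for it is circular: to invoke the linearisation at the origin or the unstable-manifold theorem you need the full state $(\phi,\phi',\phi'',\phi^{(3)})$ to lie in a small neighbourhood of $0$ for all sufficiently negative $s$, which presupposes exactly the smallness of the derivatives you are trying to prove; nothing in ``$\phi\to 0$ plus monotone $\Energy$'' excludes small-amplitude, high-frequency oscillations of $\phi$ with large $\phi''$ and $\phi^{(3)}$. The paper closes this by time-reversing, $u(s) = \phi(-s)$, so that the equation takes the form $u^{(4)} = \rho(u,u',u'') + \alpha u^{(3)}$ with $\alpha = 2(d-4) > 0$ and $\p_{u_2}\rho \geq -d^2 + 10d - 20 > 0$, and then running the invariant-region arguments of Lemma~\ref{lem:UnstableSetup} and Lemma~\ref{lem:UnstableSetup2}: if $u''$ does not tend to zero, these force $u''(s) \to \infty$, contradicting $u(s)\to 0$; a separate short argument then handles $u^{(3)}$. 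Note that both dimension restrictions enter there ($d\geq 5$ via $\alpha>0$, $d\leq 7$ via $\p_{u_2}\rho>0$), whereas your sketch never uses $d\leq 7$ — a sign that the structural mechanism the proof actually needs is missing from your argument. Without a concrete substitute for Lemma~\ref{lem:UnstableSetup}/\ref{lem:UnstableSetup2} (or an independent a priori bound on $\phi''$, $\phi^{(3)}$ given only $\phi\to 0$), the proposal does not close.
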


In preparation for our proof of \Cref{thm:BiharmonicInUnstable} we prove two abstract lemmas.
The point of these lemmas is not to be the sharpest nor as general as possible, but to clarify the conceptual reasons for the truth of \Cref{thm:BiharmonicInUnstable}.

\begin{lemma}\label{lem:UnstableSetup}
  Suppose that $l, \beta, \eps_0 > 0$, $\alpha \geq \mu > 0$, and $\xi \in C^3([0, \infty); \R)$ such that $\xi'(0) \leq -l$, $\xi''(0) = 0$, $\xi^{(3)}(0) \geq 0$, and $\sup_{s \in [0, \infty)} \xi'(s) \geq 0$.
  Moreover, $\xi$ solves the differential equation
  \begin{equation*}
    \xi^{(3)}(s) = p(s, \xi(s), \xi'(s)) + \alpha \xi''(s) \text{ for } s \geq 0,
  \end{equation*}
  where $p : [0, \infty) \times \R^2 \rightarrow \R$.
  We suppose for each $(s, q_1) \in [0, \infty) \times \R$ that the function $q_2 \mapsto p(s, q_1, q_2)$ is continuously differentiable, and $\p_{q_2} p(s, q_1, q_2) \geq \beta$.
  This condition implies for fixed $(s, q_1) \in [0, \infty) \times \R$ that the function $q_2 \mapsto p(s, q_1, q_2)$ has exactly one zero, which we denote by $q_2^\star(s, q_1)$.
  We suppose that
  \begin{equation*}
    \left\{
      \begin{aligned}
        |q_2^\star(s, q_1)| &\leq \alpha\eps_0 \text{ for } s \geq 0, |q_1| < \eps_0,
        \\
        \frac{q_2^\star(s, q_1)}{q_1} &\in [\mu, \alpha] \text{ for } s \geq 0, |q_1| \geq \eps_0.
      \end{aligned}
    \right.
  \end{equation*}
  Then, for sufficiently small $\eps_0$, $\xi'(s) \rightarrow \infty$ as $s \rightarrow \infty$.
\end{lemma}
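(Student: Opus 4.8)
The plan is to pass to a second--order equation for $w:=\xi'$, recognise the resulting dynamics as that of a perturbed saddle, and trap the orbit on the side from which it is forced to escape to $+\infty$. With $w=\xi'$ the hypothesis reads $w''=p(s,\xi,w)+\alpha w'$ on $[0,\infty)$, with $w(0)\le -l$, $w'(0)=0$, $w''(0)\ge 0$ and $\sup_{s\ge 0}w\ge 0$; the claim becomes $w(s)\to\infty$. Since $\p_{q_2}p\ge\beta>0$, for each $(s,q_1)$ the map $q_2\mapsto p(s,q_1,q_2)$ is strictly increasing with unique zero $q_2^\star(s,q_1)$, so $\sgn p(s,q_1,q_2)=\sgn\big(q_2-q_2^\star(s,q_1)\big)$ and $|p(s,q_1,q_2)|\ge\beta\,|q_2-q_2^\star(s,q_1)|$. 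Thus $p$ accelerates $w$ upward exactly when $w$ lies above the slowly varying value $q_2^\star$; together with the expansive term $\alpha w'$ (recall $\alpha>0$) the equilibria $w\equiv q_2^\star$ of the frozen problem are saddles, and $w\to\infty$ says the orbit lies above the stable side of such a saddle.

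First I would locate the orbit at $s=0$. From $w''(0)=p(0,\xi(0),w(0))\ge 0$ we get $w(0)\ge q_2^\star(0,\xi(0))$; were $|\xi(0)|<\eps_0$ this would force $w(0)\ge-\alpha\eps_0$, contradicting $w(0)\le -l$ as soon as $\eps_0<l/\alpha$. Hence $\xi(0)\le-\eps_0$, so $q_2^\star(0,\xi(0))\in[\alpha\xi(0),\mu\xi(0)]$, giving $w(0)\ge\alpha\xi(0)$ and $\xi(0)\le w(0)/\alpha\le-l/\alpha$. Now introduce $\theta:=\xi'-\alpha\xi=w-\alpha\xi$: then $\theta(0)=w(0)-\alpha\xi(0)\ge 0$, $\theta'(0)=w'(0)-\alpha w(0)=-\alpha w(0)\ge\alpha l>0$, and, crucially, $\theta''=w''-\alpha w'=p(s,\xi,w)$. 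Hence on any interval where $p\ge 0$ the function $\theta$ is convex with $\theta'\ge\theta'(0)\ge\alpha l$, so $\theta(s)\ge\alpha l\,s$ there.

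The heart of the argument is that the orbit never leaves the set $\{\,w\ge q_2^\star(s,\xi)\,\}$ (equivalently, $p\ge 0$), which it occupies at $s=0$. Let $[0,s^\star)$ be the maximal interval on which $w\ge q_2^\star(s,\xi)$. There $p\ge 0$, so $(e^{-\alpha s}w')'=e^{-\alpha s}p\ge 0$ and, since $w'(0)=0$, $w'\ge 0$; thus $w$ is non-decreasing and $\theta(s)\ge\alpha l\,s$ as above. Suppose, for contradiction, $s^\star<\infty$, so $w(s^\star)=q_2^\star(s^\star,\xi(s^\star))$ and $\theta(s^\star)=w(s^\star)-\alpha\xi(s^\star)\ge\alpha l\,s^\star>0$. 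If $\xi(s^\star)\ge\eps_0$ then $q_2^\star\le\alpha\xi$ gives $\theta(s^\star)\le 0$, a contradiction. If $|\xi(s^\star)|<\eps_0$ then $w(s^\star)\le\alpha\eps_0$, so $\theta(s^\star)<2\alpha\eps_0$ and hence $s^\star<2\eps_0/l$; but on $[0,s^\star]$ the variable $\xi$ first decreases (while $w<0$) to a value $\le-l/\alpha$, then --- because $w(s^\star)>0$ (otherwise $\xi$ is non-increasing and never reaches $(-\eps_0,\eps_0)$) --- climbs back above $-\eps_0$ with $0<w\le\alpha\eps_0$ on the ascending stretch, which forces $s^\star>\tfrac{l}{\alpha^2\eps_0}-\tfrac{1}{\alpha}$, incompatible with $s^\star<2\eps_0/l$ once $\eps_0$ is small. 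The remaining case $\xi(s^\star)\le-\eps_0$, where $w$ is still non-decreasing with $w(s^\star)=q_2^\star(s^\star,\xi(s^\star))\le\mu\xi(s^\star)<0$, is excluded by a one-sided comparison at the surface $w=q_2^\star(s,\xi)$: the orbit reaches this surface tangentially from the region $p\ge 0$, and (using continuity of $q_2^\star$, automatic when $p$ is smooth) cannot cross into $p<0$. Therefore $s^\star=\infty$.

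Consequently $p\ge 0$ and $w'\ge 0$ on all of $[0,\infty)$. Since $w'\not\equiv 0$ --- otherwise $w\equiv w(0)\le -l<0$, contradicting $\sup w\ge 0$, which also disposes of the borderline case $w''(0)=0$ --- there is $s_1$ with $w'(s_1)>0$; then $(e^{-\alpha s}w')'\ge 0$ yields $w'(s)\ge e^{\alpha(s-s_1)}w'(s_1)$ for $s\ge s_1$, so $w(s)\to\infty$, as asserted. I expect the main obstacle to be the forward-invariance of $\{w\ge q_2^\star(s,\xi)\}$ in the regime $\xi\le-\eps_0$: the crude two-sided bounds on $q_2^\star$ by themselves do not rule out $q_2^\star(s,\xi(s))$ overtaking $w(s)$ while $\xi$ decreases, so some regularity of $q_2^\star$ (harmless in the explicit, smooth applications of this paper) together with the boundary comparison above is needed; the smallness of $\eps_0$ is used both to place the orbit at $s=0$ and to kill the passage through $\{|\xi|<\eps_0\}$.
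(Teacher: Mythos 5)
Your setup (locating the orbit at $s=0$, the auxiliary quantity $\theta=\xi'-\alpha\xi$ with $\theta''=p$, and the final exponential escape once $p\ge 0$ and $\xi''>0$ persist) matches the paper's, and your treatment of the exit cases $\xi(s^\star)\ge\eps_0$ and $|\xi(s^\star)|<\eps_0$ is correct. But the step you yourself flag as the main obstacle is a genuine gap, and it sits at the heart of the argument: the forward invariance of $\{\xi'\ge q_2^\star(s,\xi)\}$ in the regime $\xi\le-\eps_0$ does not follow from the hypotheses. The lemma assumes only the two-sided bound $q_2^\star(s,q_1)/q_1\in[\mu,\alpha]$ and regularity of $p$ in $q_2$ for each \emph{fixed} $(s,q_1)$; nothing controls $\p_s q_2^\star$ or even continuity of $q_2^\star$ in $s$. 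At a tangential touching point one has $p=0$, hence $\xi^{(3)}=\alpha\xi''\ge0$, but whether the orbit crosses depends on comparing $\xi''$ with $\frac{d}{ds}\,q_2^\star(s,\xi(s))$, which is uncontrolled: within the allowed band $[\alpha q_1,\mu q_1]$ the surface can oscillate rapidly in $s$ and overtake $\xi'$ while $\xi$ and $\xi'$ are both well below $-\eps_0$ and $-\gamma$ respectively. Once the orbit leaves $\{p\ge0\}$ your convexity of $\theta$ and monotonicity of $\xi'$ are lost and the argument stalls; the ``one-sided comparison at the surface'' is exactly the missing ingredient, not a routine remark.

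The paper avoids this entirely by never working with the moving set $\{p\ge 0\}$. It tracks the orbit through regions defined by \emph{explicit inequalities on the state} $(\xi,\xi',\xi'')$, on each of which the sign (indeed a quantitative lower bound) of $p$ can be read off pointwise from the hypotheses alone: setting $\gamma=\mu l/(2\alpha)$, on $\{q_1\le -l/\alpha,\ q_2\ge-\gamma\}$ one has $q_2^\star\le\mu q_1\le-2\gamma$ and hence $p\ge\beta(q_2-q_2^\star)\ge\beta\gamma>0$, uniformly in $s$. This strict bound forces $\xi''$ up to at least $\gamma\beta^{1/2}$ by the time $\xi'$ reaches $0$, which is then enough to push $\xi'$ past $2\alpha\eps_0$ before $\xi$ can climb back to $-\eps_0$, after which the cone $\{\xi'>\alpha\eps_0,\ \xi'-\alpha\xi>0,\ \xi''>0\}$ (whose boundary is again state-defined, with $p>0$ verifiable in every sub-case $|\xi|<\eps_0$, $\xi\ge\eps_0$, $\xi\le-\eps_0$) is positively invariant and yields $\xi'\to\infty$. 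If you replace your invariance claim for $\{p\ge0\}$ with this region-by-region tracking and the quantitative bound $p\ge\beta\gamma$, the rest of your argument goes through.
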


The following proof is much clearer if one draws a diagram in the $(\xi, \xi')$-plane as they follow along.

\begin{proof}
  We set $\gamma := \frac{\mu l}{2 \alpha}$.
  By $\eps_0$ sufficiently small, we precisely mean that $\eps_0 > 0$ satisfies
  \begin{align}
    \label{eq:eps0Cond1}
    \eps_0 &< \frac{\gamma \beta^{1/2}}{2 \alpha^2}, \text{ and }
    \\
    \label{eq:eps0Cond2}
    0 &> \frac{4 \alpha^2}{\gamma \beta^{1/2}} \eps_0^2 + \eps_0 - \frac{l}{\alpha}.
  \end{align}
  Note that~\eqref{eq:eps0Cond2} implies that $\eps_0 < \frac{l}{\alpha}$.

  The conditions $\xi''(0) = 0$ and $\xi^{(3)}(0) \geq 0$ imply that $p(0, \xi(0), \xi'(0)) \geq 0$, and hence $\xi(0) \leq -l/\alpha < -\eps_0$.
  We set
  \begin{equation*}
    s_0 := \sup\left\{ \sigma \in [0, \infty) : \xi'(s) < -\gamma \text{ for all } s \in [0, \sigma) \right\}.
  \end{equation*}
  Since $\sup_{s \in [0, \infty)} \xi'(s) \geq 0$, $s_0 < \infty$.
  Note that $\xi(s_0) < \xi(0)$ and $\xi''(s_0) \geq 0$.

  Observe that for $s \in [0, \infty)$, $q_1 \leq -l/\alpha$, $q_2 \geq -\gamma$, we have
  \begin{equation*}
    p(s, q_1, q_2) \geq \gamma \beta > 0.
  \end{equation*}
  We have $\xi^{(3)}(s_0) \geq \gamma \beta > 0$, and hence there exists an $\eps > 0$ such that for $s \in (s_0, s_0 + \eps)$ we have $\xi(s) < \xi(0)$, $\xi'(s) \in \left(-\gamma, 0\right)$, and $\xi''(s) > 0$.

  We set
  \begin{equation*}
    s_1 := \sup\left\{ \sigma \in (s_0, \infty) : \xi'(s) \in \left(-\gamma, 0\right) \text{ and } \xi''(s) > 0 \text{ for all } s \in (s_0, \sigma) \right\}.
  \end{equation*}
  For $s \in [s_0, s_1)$ we have $\xi(s) < \xi(0)$ and $\xi^{(3)}(s) \geq \gamma \beta > 0$, and hence $s_1 \in (s_0, \infty)$, $\xi(s_1) < \xi(0)$, $\xi'(s_1) = 0$, and $\xi''(s_1) > 0$.

  Next, we improve the lower bound on $\xi''(s_1)$.
  Over $s \in [s_0, s_1]$ we see that $\xi''$ is non-negative and strictly monotone increasing.
  Firstly, we have
  \begin{equation}\label{eq:Xi3Bound1}
    \gamma = \xi'(s_1) - \xi'(s_0) \leq (s_1 - s_0) \xi''(s_1).
  \end{equation}
  Secondly, we have
  \begin{equation}\label{eq:Xi3Bound2}
    \xi''(s_1) \geq \gamma \beta (s_1 - s_0).
  \end{equation}
  We combine~\eqref{eq:Xi3Bound1} and~\eqref{eq:Xi3Bound2} to obtain $\xi''(s_1) \geq \gamma \beta^{1/2}$.

  Next, we set
  \begin{equation*}
    s_2 := \sup\left\{ \sigma \in [s_1, \infty) : \xi(s) < -\eps_0, \xi'(s) < 2 \alpha\eps_0, \xi''(s) > 0 \text{ for all } s \in [s_1, \sigma) \right\}.
  \end{equation*}
  For $s \in [s_1, s_2)$ we see that $p(s, \xi(s), \xi'(s)) \geq 0$, and hence $\xi^{(3)}(s) \geq \alpha \xi''(s)$.
  Therefore, $s_2 \in (s_1, \infty)$.

  We wish to show that $\xi(s_2) < -\eps_0$, and hence $\xi'(s_2) = 2\alpha\eps_0$, since $\xi''(s) \geq \gamma \beta^{1/2} > 0$ for $s \in [s_1, s_2]$.
  First, we obtain an upper bound on $s_2 - s_1$.
  We estimate
  \begin{equation*}
    \gamma \beta^{1/2} (s_2 - s_1) \leq \xi'(s_2) - \xi'(s_1) \leq 2 \alpha \eps_0.
  \end{equation*}
  Therefore, using condition~\eqref{eq:eps0Cond2}, we have
  \begin{equation*}
    \xi(s_2)
    \leq -\frac{l}{\alpha} + 2 \alpha \eps_0 (s_2 - s_1)
    \leq -\frac{l}{\alpha} + \frac{4 \alpha^2 \eps_0^2}{\gamma \beta^{1/2}}
    < -\eps_0.
  \end{equation*}
  We now have $\xi(s_2) < 0$, $\xi'(s_2) = 2 \alpha \eps_0$, and $\xi''(s_2) \geq \gamma \beta^{1/2}$.
  We set $w = \xi' - \alpha \xi$ and $z = \xi'$.
  We compute
  \begin{equation}\label{eq:PosInvSetODE1}
    \left\{
      \begin{aligned}
        z' &= w' + \alpha z,
        \\
        w'' &= p(s, \xi, \xi'),
        \\
        z(s_2) &= 2\alpha\eps_0, w(s_2) > 0, w'(s_2) \geq \gamma \beta^{1/2} - 2 \alpha^2 \eps_0.
      \end{aligned}
    \right.
  \end{equation}
  Condition~\eqref{eq:eps0Cond1} implies that $w'(s_2) > 0$.

  From our assumptions on $p$, for $z = \xi' > \alpha \eps_0$ and $w > 0$ we have $p(s, \xi, \xi') > 0$.
  We define
  \begin{equation*}
    s_3 := \sup\left\{ \sigma \in [s_2, \infty) : z(s) > \alpha \eps_0, w(s) > 0, \text{ and } w'(s) > 0 \text{ for all } s \in [s_2, \sigma) \right\}.
  \end{equation*}
  For $s \in [s_2, s_3)$,~\eqref{eq:PosInvSetODE1} implies that $z'(s) > 0$ and $w''(s) > 0$.
  Therefore, $s_3 = \infty$, and for $s \geq s_2$, $z = \xi'$ satisfies $z'(s) > \alpha z(s)$ with $z(s_2) > 0$, and hence $\xi'(s) = z(s) \rightarrow \infty$ as $s \rightarrow \infty$.
\end{proof}

Now we present the second abstract lemma.

\begin{lemma}\label{lem:UnstableSetup2}
  Let $s_0 \in \R$, $\alpha > 0$, and $u \in C^4((s_0, \infty); \R)$ which solves the  differential equation
  \begin{equation*}
    u^{(4)}(s) = \rho(u(s), u'(s), u''(s)) + \alpha u^{(3)}(s) \text{ for } s \geq s_0,
  \end{equation*}
  where $\rho \in C^1(\R^3; \R)$.
  Suppose that:
  \begin{enumerate}[(i)]
  \item
    $u(s) \rightarrow 0$ as $s \rightarrow \infty$;
  \item
    $\displaystyle \inf_{u_0, u_1, u_2 \in \R} \p_{u_2} \rho(u_0, u_1, u_2) > 0$.
    This condition implies for fixed $(u_0, u_1) \in \R^2$ that the function $u_2 \mapsto \rho(u_0, u_1, u_2)$ has exactly one zero, which we denote by $u_{2; u_0}^\star(u_1)$.
  \item\label{cond:RootBound}
    $u_{2; 0}^\star(0) = 0$, if $u_1 \neq 0$ then $\displaystyle \frac{u_{2; 0}^\star(u_1)}{u_1} \in [\mu_0, \mu_1]$, where $0 < \mu_0 \leq \mu_1 < \alpha$; and
  \item\label{cond:RootUnifConvergence}
    $u_{2; u_0}^\star \rightarrow u_{2; 0}^\star$ uniformly as $u_0 \rightarrow 0$.
  \end{enumerate}
  Then $u''(s) \rightarrow 0$ as $s \rightarrow \infty$.
\end{lemma}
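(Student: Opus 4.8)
The plan is to argue by contradiction: suppose $u''(s) \not\to 0$ as $s \to \infty$ and produce a contradiction with hypothesis~(i). The key observation is that $\xi := u'$ solves the third-order equation $\xi^{(3)}(s) = p(s, \xi(s), \xi'(s)) + \alpha \xi''(s)$ with $p(s, q_1, q_2) := \rho(u(s), q_1, q_2)$, so $\p_{q_2} p \geq \beta := \inf_{u_0,u_1,u_2} \p_{u_2}\rho > 0$ by~(ii) and the zero of $q_2 \mapsto p(s, q_1, q_2)$ is $q_2^\star(s, q_1) = u_{2; u(s)}^\star(q_1)$; thus, after a shift of the time variable, one case of the argument will be an application of \Cref{lem:UnstableSetup}. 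I would also record the reflection symmetry: if $u$ solves the equation with nonlinearity $\rho$ satisfying~(i)--(iv), then $\t{u} := -u$ solves the analogous equation with $\t\rho(v_0,v_1,v_2) := -\rho(-v_0,-v_1,-v_2)$, which again satisfies~(i)--(iv) — its root function being $\t{u}_{2;v_0}^\star(v_1) = -u_{2;-v_0}^\star(-v_1)$ — and $\t{u}'' = -u''$. Since $u'' \not\to 0$ forces $m_0 := \limsup_{s\to\infty}|u''(s)| > 0$, I would split into the cases (A) $u''$ is eventually of one sign (say $u'' \geq 0$ on some $[s_1,\infty)$, the case $u'' \leq 0$ eventually being its reflection) and (B) $u''$ takes both signs at arbitrarily large times.

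In Case~(A) ($u'' \geq 0$ on $[s_1,\infty)$) the function $u'$ is nondecreasing there; as $u \to 0$ it cannot tend to $+\infty$, so $u'(s) \to L \in \R$, and a sign analysis (otherwise $u \to \pm\infty$) forces $L = 0$, hence $u' \leq 0$ and $u'(s) \to 0$. Also $\int_{s_1}^\infty u'' = -u'(s_1) < \infty$ with $u'' \geq 0$ gives $\liminf_{s\to\infty} u''(s) = 0$, so (as $u'' \not\to 0$) $u''$ has interior local maxima at a sequence $\tau_n \to \infty$ with $u''(\tau_n) \geq m_0/2$. At such a point $u^{(3)}(\tau_n) = 0$ and $u^{(4)}(\tau_n) \leq 0$, so $\rho(u(\tau_n), u'(\tau_n), u''(\tau_n)) = u^{(4)}(\tau_n) \leq 0$; since $q_2 \mapsto \rho$ is strictly increasing with zero $u_{2;u(\tau_n)}^\star(u'(\tau_n))$, this gives $u''(\tau_n) \leq u_{2;u(\tau_n)}^\star(u'(\tau_n))$. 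Because $u(\tau_n) \to 0$ and $u'(\tau_n) \to 0$, condition~\eqref{cond:RootUnifConvergence} together with the bound $|u_{2;0}^\star(q)| \leq \mu_1 |q|$ and $u_{2;0}^\star(0) = 0$ from~\eqref{cond:RootBound} forces $u_{2;u(\tau_n)}^\star(u'(\tau_n)) \to 0$, so $\limsup_n u''(\tau_n) \leq 0$, contradicting $u''(\tau_n) \geq m_0/2$.

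In Case~(B), after possibly applying the reflection I may assume $\liminf_{s\to\infty} u''(s) < 0$; setting $l := |\liminf u''|/2 > 0$ and using that $u''$ also attains nonnegative values at arbitrarily large times, $u''$ has interior local minima at a sequence $\tau \to \infty$ with $u''(\tau) \leq -l$ and with $\sup_{s \geq \tau} u''(s) \geq 0$. For such a $\tau$ I would apply \Cref{lem:UnstableSetup} to $\xi(s) := u'(\tau + s)$: indeed $\xi'(0) = u''(\tau) \leq -l$, $\xi''(0) = u^{(3)}(\tau) = 0$, $\xi^{(3)}(0) = u^{(4)}(\tau) \geq 0$, $\sup_{s\geq 0}\xi'(s) \geq 0$, and the structural hypotheses on $p$ and its root $q_2^\star(s,q_1) = u_{2; u(\tau+s)}^\star(q_1)$ are checked by fixing $l$ first, then choosing $\eps_0 > 0$ small in the sense of~\eqref{eq:eps0Cond1}--\eqref{eq:eps0Cond2} with $\mu := \mu_0/2$, then choosing $\tau$ so large that $\sup_{t \geq \tau}|u(t)|$ (which tends to $0$ by~(i)) is small enough that~\eqref{cond:RootUnifConvergence} makes $q_2^\star(s,\cdot)$ uniformly within $\eps_0 \min(\mu_0/2, \alpha - \mu_1)$ of $u_{2;0}^\star$; then the bound $|u_{2;0}^\star(q)| \leq \mu_1 |q|$ and the ratio bound $u_{2;0}^\star(q)/q \in [\mu_0,\mu_1]$ from~\eqref{cond:RootBound} yield exactly $|q_2^\star(s,q_1)| \leq \alpha\eps_0$ for $|q_1| < \eps_0$ and $q_2^\star(s,q_1)/q_1 \in [\mu, \alpha]$ for $|q_1| \geq \eps_0$. \Cref{lem:UnstableSetup} then gives $\xi'(s) = u''(\tau + s) \to \infty$, hence $u'(s) \to \infty$ and $u(s) \to \infty$, contradicting~(i).

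The main obstacle is Case~(B): one must (a) guarantee the existence, at arbitrarily large times, of interior local minima of $u''$ whose value lies below a fixed negative threshold \emph{and} which are followed by a return of $u''$ to nonnegative values — this return condition is precisely why Case~(A) cannot be folded into Case~(B) and must instead be handled by the monotonicity/potential-well argument above — and (b) order the choices of the constants $l$, $\eps_0$, $\mu$, the allowed size of the perturbation, and the threshold for $\tau$ so that the decay of $u$ and the uniform convergence in~\eqref{cond:RootUnifConvergence} actually deliver the coefficient hypotheses of \Cref{lem:UnstableSetup}; the remaining steps are soft real-analysis facts (a monotone bounded derivative converges, an $L^1$ nonnegative function has $\liminf 0$, oscillation yields interior extrema) and the bookkeeping of the reflection symmetry.
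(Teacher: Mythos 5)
Your proposal is correct, and its core is the same mechanism as the paper's proof: argue by contradiction, locate times $\tau\to\infty$ at which $u''$ has an interior extremum of definite size with $u^{(3)}(\tau)=0$ and the right sign of $u^{(4)}(\tau)$, time-shift to $\xi(s)=u'(\tau+s)$, verify the hypotheses of \Cref{lem:UnstableSetup} by combining the ratio bounds of Condition~(\ref{cond:RootBound}) with the uniform convergence of Condition~(\ref{cond:RootUnifConvergence}) (your ordering of $l$, $\mu=\mu_0/2$, $\eps_0$, then $\tau$, and the perturbation threshold $\eps_0\min(\mu_0/2,\alpha-\mu_1)$, is a clean way to do the bookkeeping the paper does with $\upsilon_i^{1/2}$), and use the reflection $u\mapsto -u$ for the opposite sign. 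The only genuine difference is your case decomposition: the paper splits solely according to the sign of the deep extrema, with both cases sent to \Cref{lem:UnstableSetup} (one via reflection), because the ``return'' hypotheses $\sup_{s\geq\sigma_i}u''(s)\geq 0$, resp.\ $\inf_{s\geq\sigma_i}u''(s)\leq 0$, are automatic: since $u\to 0$, $|u''|$ cannot stay bounded away from zero, so $\liminf_{s\to\infty}|u''(s)|=0$, and the non-strict inequalities follow even when $u''$ never changes sign. So your remark that the eventually-one-signed case ``cannot be folded into'' the oscillation case is not quite right — it can be, and that is exactly what the paper does — but this costs you nothing: your separate treatment of that case (monotonicity of $u'$, $u'\to 0$, integrability of $u''$ giving $\liminf u''=0$, and the evaluation of $\rho$ at interior local maxima of $u''$ forcing $u''(\tau_n)\leq u^\star_{2;u(\tau_n)}(u'(\tau_n))\to 0$) is a correct, elementary alternative that avoids a second invocation of \Cref{lem:UnstableSetup}, at the price of a slightly longer argument. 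Minor loose ends (e.g.\ replacing $m_0$ or $l$ by a fixed finite threshold when the relevant $\limsup$/$\liminf$ is infinite) are easily absorbed.
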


\begin{proof}
  We proceed via contradiction, and hence assume that $u''(s) \not\rightarrow 0$ as $s \rightarrow \infty$.
  Observe that, since $u(s) \rightarrow 0$ as $s \rightarrow \infty$, there cannot exist a $s_1 > s_0$ and a $m > 0$ such that $|u''(s)| \geq m$ for all $s \geq s_1$.
  Therefore, there exists a $l > 0$ and a strictly monotone increasing sequence $( \sigma_i )_{i \in \N} \subset (s_0, \infty)$ which diverges to $\infty$ such that either:
  \begin{enumerate}[(A)]
  \item
    $u''(\sigma_i) \leq -l$, $u^{(3)}(\sigma_i) = 0$, $u^{(4)}(\sigma_i) \geq 0$, and $\sup_{s \geq \sigma_i} u''(s) \geq 0$ for $i \in \N$; or
  \item
    $u''(\sigma_i) \geq l$, $u^{(3)}(\sigma_i) = 0$, $u^{(4)}(\sigma_i) \leq 0$, and $\inf_{s \geq \sigma_i} u''(s) \leq 0$ for $i \in \N$.
  \end{enumerate}
  {\bf Case A:}
  Our aim is to apply \Cref{lem:UnstableSetup} to $\xi(s) = u'(s + \sigma_i)$ with a sufficiently large $i$.
  Therefore, we proceed by setting $\xi(s) := u'(s + \sigma_i)$, and we keep $i \in \N$ arbitrary for now.

  We set $p(s, q_1, q_2) := \rho(u(s + \sigma_i), q_1, q_2)$ for $(s, q_1, q_2) \in [0, \infty) \times \R^2$.
  Observe that
  \begin{equation*}
    \xi^{(3)} = p(s, \xi, \xi') + \alpha \xi'' \text{ for } s \geq 0.
  \end{equation*}
  Next, we want to confirm that $p$ satisfies the conditions of \Cref{lem:UnstableSetup}.
  Firstly, we set
  \begin{equation*}
    \beta := \inf_{u_0, u_1, u_2 \in \R} \p_{u_2} \rho(u_0, u_1, u_2) > 0.
  \end{equation*}
  We define
  \begin{align*}
    r_i &:= \max_{s \geq \sigma_i} |u(s)| \text{ and }
    \\
    \upsilon_i &:= \sup_{|u_0| \leq r_i, u_1 \in \R} \left| u_{2; u_0}^\star(u_1) - u_{2; 0}^\star(u_1) \right|.
  \end{align*}
  Observe that $r_i \rightarrow 0$ and $\upsilon_i \rightarrow 0$ as $i \rightarrow \infty$.

  For $s \in [0, \infty)$ and $q_1 \in \R$ we set $q_2^\star(s, q_1)$ to be the unique zero of $q_2 \mapsto p(s, q_1, q_2)$.
  We observe that $q_2^\star(s, q_1) = u_{2; u(s + \sigma_i)}^\star(q_1)$, and hence $|q_2^\star(s, q_1) - u_{2; 0}^\star(q_1)| \leq \upsilon_i$.
  From this and Condition~(\ref{cond:RootBound}) of this lemma, we have
  \begin{equation*}
    \left\{
      \begin{aligned}
        \frac{q_2^\star(s, q_1)}{q_1} &\in \left[ \mu_0 - \upsilon_i^{1/2}, \mu_1 + \upsilon_i^{1/2} \right] \text{ for } s \geq 0, |q_1| \geq \upsilon_i^{1/2},
        \\
        |q_2^\star(s, q_1)| &\leq \mu_1 \upsilon_i^{1/2} + \upsilon_i \text{ for } s \geq 0, |q_1| < \upsilon_i^{1/2}.
      \end{aligned}
    \right.
  \end{equation*}
  For the $\mu$ in \Cref{lem:UnstableSetup} we use $\mu := \mu_0/2$.
  Observe that for any $\eps_0 > 0$ there exists a $N \in \N$ such that $i \geq N$ implies
  \begin{equation*}
    \left\{
      \begin{aligned}
        \frac{q_2^\star(s, q_1)}{q_1} &\in \left[ \mu, \alpha \right] \text{ for } s \geq 0, |q_1| \geq \eps_0,
        \\
        |q_2^\star(s, q_1)| &\leq \alpha \, \eps_0 \text{ for } s \geq 0, |q_1| < \eps_0.
      \end{aligned}
    \right.
  \end{equation*}
  Therefore, we see that for sufficiently large $i$ our $\xi$ satisfies \Cref{lem:UnstableSetup}, and hence $\xi'(s) \rightarrow \infty$ as $s \rightarrow \infty$.
  Therefore, $u''(s) \rightarrow \infty$ as $s \rightarrow \infty$.
  However, this contradicts our assumption that $u(s) \rightarrow 0$ as $s \rightarrow \infty$, and hence $u''(s) \rightarrow 0$ as $s \rightarrow \infty$.

  {\bf Case B:}
  We set $v(s) = -u(s)$ for $s > s_0$.
  It is straightforward to check that $v$ satisfies the conditions of this lemma and those of Case A.
  Therefore, $v''(s) \rightarrow 0$ as $s \rightarrow \infty$, and hence $u''(s) \rightarrow 0$ as $s \rightarrow \infty$.
\end{proof}

We are now ready to prove \Cref{thm:BiharmonicInUnstable}.

\begin{proof}[Proof of \Cref{thm:BiharmonicInUnstable}]
  We first consider the time-reversal $u(s) = \phi(-s)$.
  We have that $u(s) \rightarrow 0$ as $s \rightarrow \infty$, and our aim is to show that $u'(s)$, $u''(s)$, and $u^{(3)}(s)$ all converge to zero as $s \rightarrow \infty$.
  We see that $u$ solves
  \begin{equation}\label{eq:FullODEReversed}
    \begin{aligned}
      u^{(4)} &= ((d-1) \cos(2 u) - (d-11) d - 21) u'' - \frac{3}{2} (d-3) (d-1) \sin(2 u)
      \\
      &\quad
      + \left(6 u''-(d-1) \sin(2 u)\right) (u')^2
      -(d-4) ((d-1) \cos (2 u)+3 d-5) u'
      \\
      &\quad
      -2 (d-4) (u')^3
      + 2 (d-4) u^{(3)}.
    \end{aligned}
  \end{equation}

  {\bf Step 1:}
  We apply \Cref{lem:UnstableSetup2} to show that $u''(s) \rightarrow 0$ as $s \rightarrow \infty$.
  We set $\alpha := 2(d - 4)$.
  The $\alpha > 0$ condition in \Cref{lem:UnstableSetup2} is what leads to our $d \geq 5$ restriction.
  We set
  \begin{align*}
    \rho(u_0, u_1, u_2)
    &:=
      - \frac{3}{2} \left(d^2 - 4 d + 3 \right) \sin(2 u_0)
      - (d-4) ((d-1) \cos(2 u_0) + 3 d - 5) u_1
    \\
    &\quad
      - (d-1) \sin(2 u_0) u_1^2
      - 2 (d - 4) u_1^3
    \\
    &\quad
      + \left(-d^2 + (d-1) \cos(2 u_0) + 11 d - 21 + 6 u_1^2 \right) u_2.
  \end{align*}
  We see that $u^{(4)} = \rho(u, u', u'') + \alpha u^{(3)}$.

  We compute, for $d \in \{5, 6, 7\}$,
  \begin{equation*}
    \p_{u_2} \rho(u_0, u_1, u_2)
    =
    (d-1) \cos(2 u_0) - (d - 11) d - 21 + 6 u_1^2
    \geq
    -d^2 + 10 d - 20 > 0
  \end{equation*}
  for $u_0$, $u_1$, $u_2 \in \R$.
  Observe that for $d \geq 8$ the polynomial $-d^2 + 10 d - 20$ is negative.
  This is what leads to our $d \leq 7$ restriction.

  We now move onto verifying Condition~(\ref{cond:RootBound}) of \Cref{lem:UnstableSetup2}.
  We rearrange $\rho = 0$ to obtain
  \begin{equation*}
    u^\star_{2; 0}(u_1) =
    2 (d-4) \, \frac{2d - 3 + u_1^2}{-d^2 + 12 d - 22 + 6 u_1^2} \, u_1.
  \end{equation*}
  We see that $u^\star_{2; 0}(0) = 0$, and for $u_1 \neq 0$
  \begin{equation*}
    \frac{u^\star_{2; 0}(u_1)}{2 (d-4) u_1}
    \in
    \left[ \frac{1}{6}, \frac{2 d - 3}{-d^2 + 12 d - 22} \right].
  \end{equation*}
  Therefore, we set
  \begin{equation*}
    \mu_0 := \frac{d-4}{3}
    \text{ and }
    \mu_1 := \frac{2 (d-4) (2 d - 3)}{-d^2 + 12 d - 22}.
  \end{equation*}
  It is straightforward to verify that $0 < \mu_0 \leq \mu_1 < \alpha$ for $d \in \{5, 6, 7\}$.

  Before we can apply \Cref{lem:UnstableSetup2}, we need to check Condition~(\ref{cond:RootUnifConvergence}) of \Cref{lem:UnstableSetup2}.
  We are interested in $u^\star_{2; u_0}(u_1)$ as $u_0 \rightarrow 0$.
  We have
  \begin{equation*}
    u^\star_{2; u_0}(u_1)
    =
    \frac{
      2 (d - 4) (2 d - 3) u_1
      + 2 (d - 4) u_1^3 + o(1) + o(u_1) + o(u_1^2)}{-d^2 + 12 d - 22 + o(1) + 6 u_1^2} \text{ as } u_0 \rightarrow 0.
  \end{equation*}
  It is straightforward to show from this that $u^\star_{2; u_0} \rightarrow u^\star_{2; 0}$ uniformly as $u_0 \rightarrow 0$.

  {\bf Step 2:}
  Since $u(s)$ and $u''(s)$ both converge to zero as $s \rightarrow \infty$, we have that $u'(s) \rightarrow 0$ as $s \rightarrow \infty$.
  All that is remaining to show is that $u^{(3)}(s) \rightarrow 0$ as $s \rightarrow \infty$.
  We proceed via contradiction, and hence assume that $u^{(3)}(s) \not\rightarrow 0$ as $s \rightarrow \infty$.
  Therefore, there exists an $\eps > 0$ and a strictly monotone increasing sequence $\sigma_i \rightarrow \infty$ such that $|u^{(3)}(\sigma_i)| \geq \eps$.
  From~\eqref{eq:FullODEReversed} we see that
  \begin{equation*}
    \p_s u^{(3)} = 2 (d - 4) u^{(3)} + o(1) \text{ as } s \rightarrow \infty.
  \end{equation*}
  Therefore, we can find a $s_\star > - s_0$ sufficiently large such that
  \begin{equation*}
    \left\{
    \begin{aligned}
      \p_s |u^{(3)}(s)| &\geq 2 (d - 4) |u^{(3)}(s)| - \eps \text{ for } s \geq s_\star, \text{ and }
      \\
      |u^{(3)}(s_\star)| &\geq \eps.
    \end{aligned}
    \right.
  \end{equation*}
  Therefore, either $u^{(3)}(s) \rightarrow -\infty$ or $u^{(3)}(s) \rightarrow \infty$ as $s \rightarrow \infty$.
  However, this contradicts with $u''(s) \rightarrow 0$ as $s \rightarrow \infty$, and hence $u^{(3)}(s) \rightarrow 0$ as $s \rightarrow \infty$.
\end{proof}

\section{Finite-time blowup once \texorpdfstring{$|\phi''|$}{|phi''|} becomes large}\label{sect:EventualBlowup}
Before we discuss the aim of this section we define, for $d \in \{5, 6, 7\}$,
\begin{equation*}
  c_\star := \max\left\{\max_{\phi \in \R} \left(-\frac{1}{12} q'(\phi)\right),\, \max_{\phi \in \R} \frac{f(\phi)}{q(\phi)},\, \max_{\phi \in \R} \left(2F(\phi)\right)^{1/2} \right\},
\end{equation*}
where $f$ and $q$ are as defined in~\eqref{eq:FullAbstractODESpecificSubstitutions}, and $F$ is from~\eqref{eq:PotentialBiharmonicODE}.
For $d = 5$, $6$, and $7$ the values of $c_\star$ are, respectively, $2 \sqrt{6}$, $3 \sqrt{5}$, and $36/\sqrt{13}$.

The aim of this section is to prove the following theorem.

\begin{theorem}\label{thm:BiharmBlowup}
  Let $d \in \{5, 6, 7\}$ and $\phi \in C^\infty((-\infty, s_f); \R)$ be an orbit in $W^u(0)$ of~\eqref{eq:FullODE}, where $s_f \in (-\infty, \infty]$ is the maximal time of existence of $\phi$.
  If there exists a $s_0 \in (-\infty, s_f)$ such that $\phi''(s_0) \geq c_\star$ (or, $\phi''(s_0) \leq -c_\star$), then $s_f < \infty$, $\phi^{(3)}(s) > 0$ (resp., $\phi^{(3)}(s) < 0$) for $s \in (s_0, s_f)$, and $\phi^{(i)}(s) \rightarrow \infty$ (resp., $\phi^{(i)}(s) \rightarrow -\infty$) as $s \nearrow s_f$ for $i \in \{0, 1, 2, 3\}$.
\end{theorem}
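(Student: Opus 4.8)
The plan is to run the whole argument off the monotone quantity $\Energy$ from~\eqref{eq:EnergyDefn}. Since $\phi\in W^u(0)$ we have $\Energy[\phi](s)\to 0$ as $s\to-\infty$, and for $d\ge 5$ the identity~\eqref{eq:EnergyRateOfChange} gives $\p_s\Energy[\phi]\ge 0$ because $(d-1)\cos(2\phi)+3d-5\ge 2d-4>0$; hence $\Energy[\phi](s)\ge 0$ for all $s$. The number $c_\star$ is designed precisely so that the slab $\{\phi''\ge c_\star\}$ is favourable: from $c_\star\ge\max(-\tfrac1{12}q')$ one gets $6\phi''+\tfrac12 q'(\phi)\ge 5\phi''$ there (for $d\in\{5,6,7\}$ one checks $6\phi''-(d-1)\ge 5\phi''$ already at $\phi''=c_\star$); from $c_\star\ge\max(f/q)$ together with $q>0$ (valid for $d\le 7$) one gets $q(\phi)\phi''-f(\phi)\ge 0$; and from $c_\star\ge\max(2F)^{1/2}$ one gets $U[\phi]=F(\phi)-\tfrac12(\phi'')^2\le 0$, so that $T[\phi]=\Energy[\phi]-U[\phi]\ge 0$. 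Writing~\eqref{eq:FullAbstractODE} in the form
\begin{equation*}
  \phi^{(4)}+2(d-4)\phi^{(3)}
  = \big(q(\phi)\phi''-f(\phi)\big)+\big(6\phi''+\tfrac12 q'(\phi)\big)(\phi')^2+2(d-4)\,\phi'\big(g(\phi)+(\phi')^2\big),
\end{equation*}
the first two groups on the right are non-negative on $\{\phi''\ge c_\star\}$, and $g(\phi)\ge d-2>0$ always.

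First I would carry out a sign analysis on the slab $\{\phi''\ge c_\star\}$. When $\phi'\ge 0$ there, the displayed identity gives $\phi^{(4)}+2(d-4)\phi^{(3)}\ge 0$, hence $e^{2(d-4)s}\phi^{(3)}$ is non-decreasing. When $\phi'<0$ there, $T[\phi]\ge 0$ forces $\phi^{(3)}+2(d-4)\phi''-\tfrac12 q(\phi)\phi'-\tfrac32(\phi')^3\le 0$; feeding this into the displayed identity and using the pointwise identity $\tfrac12 q(\phi)-g(\phi)=-\tfrac12(d-4)^2$ reduces $\phi^{(4)}\ge 0$ to the inequality $2(d-4)\phi''\ge\tfrac12|\phi'|\big((d-4)^2-(\phi')^2\big)$, whose right side is at most $\tfrac{(d-4)^3}{3\sqrt3}$, so it holds because $\phi''\ge c_\star\ge\tfrac{(d-4)^2}{6\sqrt3}$ for $d\in\{5,6,7\}$. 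Either way $\phi^{(3)}$ has a non-negative drift on $\{\phi''\ge c_\star\}$, and from this I would extract an $s_1\ge s_0$ with $(\phi'(s_1),\phi''(s_1),\phi^{(3)}(s_1))\in\mathcal{R}:=[0,\infty)\times[c_\star,\infty)\times[0,\infty)$, and check that $\mathcal{R}$ is forward invariant: on $\{\phi^{(3)}=0\}$ we have $\phi^{(4)}\ge 0$; on $\{\phi''=c_\star\}$ we have $(\phi'')'=\phi^{(3)}\ge 0$; on $\{\phi'=0\}$ we have $(\phi')'=\phi''>0$. Inside $\mathcal{R}$, $\phi''\ge c_\star$ gives $\phi'(s)\to\infty$, then $\phi^{(4)}+2(d-4)\phi^{(3)}\ge 2(d-4)(d-2)\phi'(s)\to\infty$ together with the integrated form of $(e^{2(d-4)s}\phi^{(3)})'=e^{2(d-4)s}\big(q\phi''-f+(6\phi''+\tfrac12 q')(\phi')^2+2(d-4)\phi'(g+(\phi')^2)\big)$ gives $\phi^{(3)}(s)\to\infty$, hence $\phi''(s)\to\infty$ and $\phi(s)\to\infty$. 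Strict positivity $\phi^{(3)}>0$ on $(s_0,s_f)$ follows from the drift sign once we note $\phi^{(3)}$ cannot vanish on an open interval, as then $\phi$ would be a quadratic polynomial, incompatible with~\eqref{eq:FullODE}.

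The remaining --- and hardest --- point is upgrading ``$\phi',\phi'',\phi^{(3)}\to\infty$'' to blow-up at a \emph{finite} $s_f$. The relevant profile is $\phi(s)\sim-\log(s_f-s)$, for which $\phi''\sim(\phi')^2$ and $\phi^{(3)}\sim 2(\phi')^3$. To capture this I would pass, on the region where $\phi'$ is large, to the variables $\rho:=\phi''/(\phi')^2$ and $\eta:=\phi^{(3)}/(\phi')^3$; a direct computation gives $\rho'=(\eta-2\rho^2)\,\phi'$ and, from the displayed identity with $6\phi''+\tfrac12 q'(\phi)\ge 5\phi''$, the inequality $\eta'\ge(5-3\eta)\rho\,\phi'+2(d-4)(1-\eta)$. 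Regarding $\phi'$ as a diverging reparametrisation of time ($d\tau=\phi'\,ds$), the planar system governing $(\rho,\eta)$ is asymptotically autonomous with the attracting node $(\sqrt{5/6},\,5/3)$, and a short phase-plane argument shows $(\rho,\eta)$ is eventually trapped near it; in particular $\phi''\ge\tfrac12(\phi')^2$ from some time on. Then $\tfrac{d}{ds}\phi'=\phi''\ge\tfrac12(\phi')^2$ is a Riccati inequality, forcing $\phi'$, and with it $\phi$, $\phi''$, $\phi^{(3)}$, $\phi^{(4)}$, to blow up at a finite $s_f$, with all of $\phi^{(i)}$, $i\in\{0,1,2,3\}$, tending to $+\infty$. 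A more hands-on packaging of the same fact is to verify that the cone $\{\phi'\ge\sqrt{2(d-1)},\ \phi''\ge\tfrac12(\phi')^2,\ \phi^{(3)}\ge\tfrac34(\phi')^3\}$ is forward invariant --- checking only its three faces via the displayed identity --- and that the orbit enters it. The case $\phi''(s_0)\le-c_\star$ reduces to the above via the symmetry $\phi\mapsto-\phi$ of~\eqref{eq:FullODE} and~\eqref{eq:EnergyDefn}.

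I expect the main obstacle to be this last step: controlling the two ratios $\rho,\eta$ --- equivalently, showing the orbit enters the Riccati regime (or the cone) --- is where all four derivatives interact at once and where the constants must be made to work for $d=5$ (and $6,7$); in particular one also needs a complementary upper bound on $\phi^{(4)}$ to keep $\eta$ from escaping upward. The bookkeeping needed to pass from the bare hypothesis $\phi''(s_0)\ge c_\star$ to membership of $\mathcal{R}$ at some $s_1\ge s_0$, and to the strict sign $\phi^{(3)}>0$ immediately after $s_0$, is a secondary fiddly point.
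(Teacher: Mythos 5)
Your first half essentially reconstructs the paper's own reduction: the definition of $c_\star$ is exactly what makes $q(\phi)\phi''-f(\phi)\ge 0$, $6\phi''+\tfrac12 q'(\phi)\ge 0$ and $U[\phi]\le 0$ on the slab $\{\phi''\ge c_\star\}$, and monotonicity of $\Energy$ together with $\Energy[\phi](s)\to 0$ as $s\to-\infty$ gives $T[\phi]>0$ there. The paper packages the entry into your region $\mathcal{R}$ more cleanly: it takes $s_0$ to be the \emph{first} time $\phi''$ reaches $c_\star$, so that $\phi''(s_0)=c_\star$ and $\phi^{(3)}(s_0)\ge 0$ automatically, and then $T[\phi](s_0)>0$ forces $\phi'(s_0)>0$; forward propagation of the sign of $\phi^{(3)}$ is then exactly \Cref{lem:ThirdDerivativeGoesToInfty}, proved from the growth bounds~\eqref{eq:pGrowthBounds}. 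Modulo the bookkeeping you yourself flag (which the first-crossing trick disposes of), this part is sound and is the same mechanism as in the paper.

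The genuine gap is the finite-time blow-up step, which you correctly identify as the crux but do not carry out. Two concrete problems. First, the asymptotic fixed point of your $(\rho,\eta)$ system is $(1,2)$, not $(\sqrt{5/6},\,5/3)$: the coefficient of $(\phi')^2\phi''$ in~\eqref{eq:FullODE} is asymptotically $6$, and your value comes from inserting the one-sided bound $6\phi''+\tfrac12 q'\ge 5\phi''$ as if it were an equality. This slip is repairable, but it reflects the deeper issue that you only derive \emph{lower} differential inequalities, whereas the trapping you invoke requires two-sided control (an upper bound on $\phi^{(4)}$, hence on $\eta$ and on $\rho$), which is never established. Second, even granting trapping, the Riccati inequality $\phi''\ge\tfrac12(\phi')^2$ gives $(1/\phi')'\le-\tfrac12$, hence finite-time divergence of $\phi'$ and the \emph{upper} bound $\phi'(s)\le 2/(s_f-s)$; it does not yield $\phi(s)\to\infty$, for which one needs the complementary lower bound $\phi'(s)\ge c/(s_f-s)$, i.e.\ $\rho$ bounded \emph{above}. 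This two-sided control is precisely the content of the paper's \Cref{lem:ODEBlowup}: normalising by $\lambda=(\phi^{(3)})^{1/3}$, it proves upper and lower bounds for $v_1=\phi'/\lambda$ and $v_2=\phi''/\lambda^2$ and pinches $\lambda'/\lambda^2$ between positive constants, using both inequalities in~\eqref{eq:pGrowthBounds}, and only then concludes $s_f<\infty$ together with $\phi^{(i)}(s)\to\infty$ for all $i\in\{0,1,2,3\}$. As written, your proposal asserts rather than proves this step, so the argument is incomplete exactly where the theorem's difficulty lies.
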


First we have two abstract preparatory lemmas that apply to fourth-order ODE that have the same structure as~\eqref{eq:FullODE} in the $d \in \{5, 6, 7\}$ case.
Next, we set up these two lemmas.
Let $\alpha, \beta > 0$, and $p \in C^1(\R^3; \R)$.
Let $C_1 \geq 1$ and $c_0 \geq 0$ such that
\begin{equation}\label{eq:pGrowthBounds}
  \beta (\xi_2 - c_0) \xi_1^2 + C_1^{-1} \xi_1^3
  \leq p(\xi_0, \xi_1, \xi_2) \leq
  \beta \xi_1^2 \xi_2 + C_1 \left(1 + \xi_2 + \xi_1^3\right)
\end{equation}
for $\xi_0 \in \R$, $\xi_1 \geq 0$, and $\xi_2 \geq c_0$.
In the next two lemmas we will consider solutions $u \in C^4$ to the ODE
\begin{equation}\label{eq:BlowupODE}
  u^{(4)} = p(u, u', u'') - \alpha u^{(3)}.
\end{equation}
Now we present the first preparatory lemma.

\begin{lemma}\label{lem:ThirdDerivativeGoesToInfty}
  Suppose $u \in C^4([0, s_f); \R)$ satisfies~\eqref{eq:BlowupODE}, and $u'(0)$, $u''(0) - c_0$, and $u^{(3)}(0)$ are all non-negative with at least one of them positive.
  Furthermore, suppose $s_f \in (0, \infty]$ is the maximal time of existence of $u$, that is, either $s_f = \infty$, or $s_f < \infty$ and $|(u(s), u'(s), u''(s), u^{(3)}(s))| \rightarrow \infty$ as $s \nearrow s_f$.
  Then $u^{(3)}(s) > 0$ for $s \in (0, s_f)$, and $u^{(3)}(s) \rightarrow \infty$ as $s \nearrow s_f$.
\end{lemma}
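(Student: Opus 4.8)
The plan is to set up an open region in $(u', u'', u^{(3)})$-space on which all three coordinates stay non-negative and one of them strictly grows, show that the orbit cannot leave this region once it enters it, and then extract the divergence of $u^{(3)}$ from a differential inequality. Concretely, define
\begin{equation*}
  s_0 := \sup\left\{ \sigma \in [0, s_f) : u'(s) \geq 0,\ u''(s) \geq c_0,\ u^{(3)}(s) \geq 0 \text{ for all } s \in [0, \sigma) \right\}.
\end{equation*}
On $[0, s_0)$ the lower bound in~\eqref{eq:pGrowthBounds} gives $p(u, u', u'') \geq 0$, so $u^{(4)} \geq -\alpha u^{(3)}$, i.e.\ $(e^{\alpha s} u^{(3)})' \geq 0$; since $u^{(3)}(0) \geq 0$ this forces $u^{(3)} \geq 0$, hence $u''$ is non-decreasing, hence $u'' \geq c_0$, hence $u'$ is non-decreasing, hence $u' \geq 0$ throughout. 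A short argument using the hypothesis that at least one of $u'(0)$, $u''(0)-c_0$, $u^{(3)}(0)$ is positive promotes this to $u^{(3)}(s) > 0$ for $s \in (0, s_0)$: if $u^{(3)}(0) > 0$ it is immediate from $(e^{\alpha s}u^{(3)})' \geq 0$; if $u''(0) > c_0$ then $p \geq C_1^{-1}(u')^3 + \beta(u''-c_0)(u')^2$ together with the fact that once $u'$ becomes positive it stays positive forces $u^{(3)}$ to become and stay positive; and if only $u'(0)>0$, then $u''$ strictly increases past $c_0$ and we reduce to the previous case.

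The next step is to prove that in fact $s_0 = s_f$ — the orbit never exits the region. This is where I expect the only real subtlety, and it is an exit-time argument at $s = s_0$: if $s_0 < s_f$ then by continuity and maximality at least one of $u'(s_0) = 0$, $u''(s_0) = c_0$, $u^{(3)}(s_0) = 0$ holds while the others satisfy the corresponding inequalities, and one must rule each out. Since $u^{(3)} \geq 0$ on $[0,s_0)$, $u''$ is non-decreasing there, so $u''(s_0) \geq u''(0) \geq c_0$ with equality only if $u''$ is constant $=c_0$ on $[0,s_0]$, forcing $u^{(3)} \equiv 0$ there, hence (from the ODE and~\eqref{eq:pGrowthBounds}) $0 = u^{(4)} \geq C_1^{-1}(u')^3 \geq 0$ so $u' \equiv 0$ too, contradicting the hypothesis that one of the initial quantities is positive (this also handles $u'(s_0)=0$, since $u'$ is non-decreasing and would have to be identically zero). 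Finally $u^{(3)}(s_0) = 0$ is impossible because $u^{(3)}(s_0) \geq u^{(3)}(s)e^{\alpha(s - s_0)} > 0$ for any $s \in (0,s_0)$ using $(e^{\alpha s}u^{(3)})' \geq 0$ and the strict positivity established above. Hence $s_0 = s_f$ and $u^{(3)} > 0$ on all of $(0, s_f)$.

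The last step is the blow-up of $u^{(3)}$. On $(0, s_f)$ we now have $u' \geq 0$, $u'' \geq c_0$, $u^{(3)} > 0$, so the lower bound in~\eqref{eq:pGrowthBounds} gives $u^{(4)} = p - \alpha u^{(3)} \geq C_1^{-1}(u')^3 - \alpha u^{(3)}$; but this alone is not obviously enough, so instead I would iterate: because $u'' \geq c_0$ and $u^{(3)} > 0$, both $u'$ and $u''$ are non-decreasing, so after a short initial time we may assume $u' \geq \delta > 0$ and then $u'' \to \infty$ (as $u''(s) \geq u''(s_1) + \delta(s - s_1)$), and once $u''$ is large the term $\beta(u'' - c_0)(u')^2$ in the lower bound dominates, yielding $u^{(4)} \geq \tfrac{1}{2}\beta \delta^2 u'' - \alpha u^{(3)}$. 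Feeding the growth of $u''$ back in shows $\liminf u^{(4)}/u^{(3)}$ is controlled from below in a way that forces $u^{(3)}$ to grow at least exponentially (or, if $s_f < \infty$, to blow up); in either case $u^{(3)}(s) \to \infty$. If $s_f = \infty$ this is the exponential-growth conclusion; if $s_f < \infty$, maximality says $|(u,u',u'',u^{(3)})| \to \infty$, and since $u$, $u'$, $u''$ are all monotone and would remain finite on a finite interval unless $u^{(3)}$ is unbounded, the blow-up must occur through $u^{(3)}$, so $u^{(3)} \to \infty$ as well.
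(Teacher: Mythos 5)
Your first half is sound: the invariant-region/exit-time argument establishing $u^{(3)}(s)>0$ on $(0,s_f)$ (and hence monotonicity of $u$, $u'$, $u''$) is correct and is essentially a reorganisation of the paper's Steps 1--2, which instead derive the quantitative barrier $u^{(3)}(s)\geq\min\{u^{(3)}(q),\tfrac{1}{\alpha C_1}(u'(q))^3\}>0$ on $[q,s_0)$ to prevent $u^{(3)}$ from reaching zero.

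The blow-up half has genuine gaps. First, the inequality $u''(s)\geq u''(s_1)+\delta(s-s_1)$ does not follow from $u'\geq\delta$; it would require a positive lower bound on $u^{(3)}$, which you have not established at that point. This is fixable in the spirit of the paper: from $u'\geq\delta$ and the lower bound in~\eqref{eq:pGrowthBounds}, $u^{(4)}+\alpha u^{(3)}\geq C_1^{-1}\delta^3$, and the damped comparison gives $\liminf u^{(3)}\geq C_1^{-1}\delta^3/\alpha>0$ when $s_f=\infty$, whence $u''\to\infty$, $u'\to\infty$, and then $u^{(4)}+\alpha u^{(3)}\geq C_1^{-1}(u')^3\to\infty$ forces $u^{(3)}\to\infty$. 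Second, and more seriously, your treatment of the case $s_f<\infty$ only shows that $u^{(3)}$ is \emph{unbounded}: since $u^{(3)}$ is not monotone, ``the blow-up must occur through $u^{(3)}$'' yields $\limsup_{s\nearrow s_f}u^{(3)}(s)=\infty$ but not $u^{(3)}(s)\to\infty$, which is what the lemma asserts. Nor does the damped comparison alone close this on a finite interval: if the forcing only becomes large on $[S,s_f)$ with $s_f-S$ small, the integrating factor contributes only a factor $1-e^{-\alpha(s-S)}$, so a divergent forcing need not translate into a divergent lower bound for $u^{(3)}$. The paper needs a dedicated argument here: assume $l_0:=\liminf_{s\nearrow s_f}u^{(3)}(s)<\infty$, deduce $u''\to\infty$ (if $u''$ were bounded, so would be $u$ and $u'$, and the blow-up criterion would force $u^{(3)}\to\infty$), hence $g:=u^{(4)}+\alpha u^{(3)}=p(u,u',u'')\to\infty$ by~\eqref{eq:pGrowthBounds}, then use $\limsup u^{(3)}=\infty$ to pick $s_2$ with $g>\alpha(l_0+1)$ on $[s_2,s_f)$ \emph{and} $u^{(3)}(s_2)>l_0+1$, so that $u^{(3)}\geq l_0+1$ on $[s_2,s_f)$, contradicting the definition of $l_0$. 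Some such barrier/contradiction step, using the limsup to get above the barrier level, is missing from your sketch, as is any actual derivation of the claimed exponential growth.
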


\begin{proof}
  {\bf Step 1.}
  Observe that there exists an $\eps_0 > 0$ such that $u'(s) > 0$, $u''(s) > c_0$, and $u^{(3)}(s) > 0$ for all $s \in (0, \eps_0)$.

  {\bf Step 2.}
  We show that $u^{(3)}(s) > 0$ for all $s \in (0, s_f)$.
  We define
  \begin{equation*}
    s_0 := \sup\{ \tau \in (0, s_f) : u^{(3)}(s) > 0 \text{ for all } s \in (0, \tau) \}.
  \end{equation*}
  Note that $s_0 \in (0, s_f]$.
  Observe that for all $s \in [0, s_0)$, $u$, $u'$, and $u''$ are monotone strictly-increasing, and hence
  \begin{equation}\label{eq:AbstractBlowup1}
    u^{(4)}(s) + \alpha u^{(3)}(s) \geq C_1^{-1} \left(u'(q)\right)^3 > 0 \text{ for } s \in [q, s_0),
  \end{equation}
  where $q \in (0, s_0)$.
  Therefore, after setting $q = \min\{1, s_0/2\}$ we have
  \begin{equation*}
    u^{(3)}(s) \geq \min\{u^{(3)}(q), \frac{1}{\alpha C_1} \left(u'(q)\right)^3\} > 0 \text{ for } s \in [q, s_0),
  \end{equation*}
  and hence $s_0 = s_f$.

  {\bf Step 3.}
  We show that $u^{(3)}(s) \rightarrow \infty$ as $s \nearrow s_f$.
  First we assume that $s_f = \infty$.
  From~\eqref{eq:AbstractBlowup1} we see that there exists a $s_1 > 1$ such that $u^{(3)}(s) \geq \frac{1}{2} C^{-1} \left(u'(1)\right)^3$ for all $s \geq s_1$.
  Therefore, $u''(s) \rightarrow \infty$ and $u'(s) \rightarrow \infty$, as $s \rightarrow s_f$.
  Since
  \begin{equation*}
    u^{(4)}(s) + \alpha u^{(3)}(s) \geq C_1^{-1} \left(u'(s)\right)^3 \text{ for } s \in [0, s_f),
  \end{equation*}
  we have $u^{(3)}(s) \rightarrow \infty$ as $s \rightarrow s_f$.

  Next, we suppose that $s_f < \infty$.
  Hoping for a contradiction, we assume that
  \begin{equation*}
    l_0 := \liminf_{s \nearrow s_f} u^{(3)}(s) < \infty.
  \end{equation*}
  Since $|(u(s), u'(s), u''(s), u^{(3)}(s))| \rightarrow \infty$ as $s \nearrow s_f$ and $u''$ is strictly monotone increasing, we have $u''(s) \rightarrow \infty$ as $s \nearrow s_f$.
  Therefore,
  \begin{equation*}
    \limsup_{s \nearrow s_f} u^{(3)}(s) = \infty.
  \end{equation*}
  We set
  \begin{equation*}
    g(s) := u^{(4)}(s) + \alpha u^{(3)}(s) \text{ for } s \in [0, s_f).
  \end{equation*}
  From~\eqref{eq:pGrowthBounds} we have $g(s) \rightarrow \infty$ as $s \nearrow s_f$.
  There exists a $s_2 \in [0, s_f)$ such that $g(s) > \alpha (l_0 + 1)$ for all $s \in [s_2, s_f)$ and $u^{(3)}(s_2) > l_0 + 1$.
  Therefore, $u^{(3)}(s) \geq l_0 + 1$ for all $s \in [s_2, s_f)$ which is our desired contradiction.
\end{proof}

Now we build upon this result with the following lemma.

\begin{lemma}\label{lem:ODEBlowup}
  Suppose $u \in C^4([0, s_f); \R)$ satisfies~\eqref{eq:BlowupODE}, and $u'(0)$, $u''(0) - c_0$, and $u^{(3)}(0)$ are all non-negative with at least one of them positive.
  Furthermore, suppose $s_f \in (0, \infty]$ is the maximal time of existence of $u$, that is, either $s_f = \infty$, or $s_f < \infty$ and $|(u(s), u'(s), u''(s), u^{(3)}(s))| \rightarrow \infty$ as $s \nearrow s_f$.
  Then $s_f < \infty$, and $u^{(i)}(s) \rightarrow \infty$ as $s \nearrow s_f$ for $i \in \{0, 1, 2, 3\}$.
\end{lemma}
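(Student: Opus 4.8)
The plan is to bootstrap from \Cref{lem:ThirdDerivativeGoesToInfty}: that lemma already gives us $u^{(3)}(s) > 0$ on $(0, s_f)$ and $u^{(3)}(s) \to \infty$ as $s \nearrow s_f$, so the task reduces to (a) ruling out $s_f = \infty$ and (b) upgrading $u^{(3)} \to \infty$ to the statement that $u$, $u'$, and $u''$ all diverge to $+\infty$ as well. Since $u^{(3)} > 0$ throughout $(0, s_f)$, all three lower-order derivatives are strictly monotone increasing on $(0, s_f)$, so each has a limit in $(-\infty, +\infty]$ as $s \nearrow s_f$; the content is to show each of these limits is $+\infty$. Note $u''$ is increasing and its derivative $u^{(3)} \to \infty$, so $u''(s) \to +\infty$ is immediate; then $u'$ is increasing with $u'(0) \geq 0$ and derivative $u'' \to \infty$, so $u'(s) \to +\infty$; and likewise $u(s) \to +\infty$. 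Thus (b) follows essentially for free once (a) is done, or even in the $s_f = \infty$ case.

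The real work is showing $s_f < \infty$, i.e. that the blowup happens in finite $s$-time rather than the derivatives merely growing without bound over an infinite time interval. First I would record that, because $u' \to \infty$, there is some $s_1 \in (0, s_f)$ with $u'(s) \geq 1$ for $s \geq s_1$, and from then on the cubic term $C_1^{-1} (u')^3$ in the lower bound of~\eqref{eq:pGrowthBounds} dominates. So on $[s_1, s_f)$ we have the differential inequality
\begin{equation*}
  u^{(4)} + \alpha u^{(3)} \geq C_1^{-1} (u')^3.
\end{equation*}
The strategy is to convert this into a closed differential inequality for a single monotone quantity that is already known to be blowing up. A clean choice is to work with $v := u'$, which satisfies $v''' + \alpha v'' \geq C_1^{-1} v^3$ with $v, v', v''$ all positive and increasing on $[s_1, s_f)$. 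Since $v'' > 0$ we get $v''' \geq v''' + \alpha v''(\geq 0)$ is false in the wrong direction — instead I would absorb the $\alpha v''$ term differently: integrate. Multiplying through is delicate because of the sign of $\alpha u^{(3)}$ on the left; the cleaner route is to note that for $s$ large, $u^{(4)}(s) = p - \alpha u^{(3)} \geq C_1^{-1}(u')^3 - \alpha u^{(3)}$, and once $u^{(3)}$ is large enough relative to the (controlled) growth this still forces super-linear growth. The sharpest approach: pick $s_2 \geq s_1$ so large that $C_1^{-1}(u'(s))^3 \geq 2\alpha\, u^{(3)}(s)$ for $s \geq s_2$ — possible because $u'$ is increasing and $(u')^3$ outpaces $u^{(3)}$, which one checks by another monotonicity/integration argument using that $u^{(3)}(s) \leq u^{(3)}(s_2) + \int_{s_2}^s u^{(4)}$ and controlling $u^{(4)}$ via the upper bound in~\eqref{eq:pGrowthBounds}. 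Then $u^{(4)} \geq \tfrac12 C_1^{-1}(u')^3$ on $[s_2, s_f)$, and comparing with the scalar ODE $w^{(4)} = \tfrac12 C_1^{-1} w^3$ (or more simply integrating up to get $u^{(3)}(s) \geq c\,(s - s_2)$, then $u''(s) \gtrsim (s-s_2)^2$, then $u'(s) \gtrsim (s-s_2)^3$, and feeding this back: $u^{(4)}(s) \gtrsim (s - s_2)^9$, which is consistent with but does not yet close) one runs a standard finite-time-blowup comparison.

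Concretely, the cleanest finishing argument: once $u^{(4)} \geq \tfrac12 C_1^{-1}(u')^3$ and $u', u''$ are positive, set $y := u'$, so $y^{(3)} \geq \tfrac12 C_1^{-1} y^3 - \alpha y''$; but since we arranged the cubic to beat $\alpha u^{(3)}$, and $\alpha y'' = \alpha u^{(3)}$, we in fact have $y^{(3)} \geq \tfrac14 C_1^{-1} y^3$ on $[s_2, s_f)$ with $y, y', y''$ all positive. Multiplying $y^{(3)} \geq \tfrac14 C_1^{-1} y^3$ by $y'' > 0$ and also using $y' y^{(3)} \le (y' y'')' $ type manipulations, or simply: from $y'' > 0$ we get $y(s) \geq y(s_2) + y'(s_2)(s - s_2)$, and plugging a linear lower bound into $y^{(3)} \geq \tfrac14 C_1^{-1} y^3$ gives $y^{(3)}(s) \geq c(1 + s - s_2)^3$, hence $y(s) \geq c'(1 + s - s_2)^6$ eventually, and iterating this bootstrap $y \gtrsim (s-s_2)^{6^k}$ forces $s_f < \infty$ (an infinite time interval is incompatible with faster-than-every-polynomial growth being self-consistent in $y^{(3)} \geq \tfrac14 C_1^{-1} y^3$, since that inequality integrated three times gives $y(s) \le y(s_2) + y'(s_2)(s-s_2) + \ldots$ — no, that is the wrong direction; instead use the standard fact that $y^{(3)} \ge c y^3$, $y>0$ increasing, cannot have a global solution, which follows by comparison with the ODE $z^{(3)} = c z^3$ whose positive increasing solutions blow up in finite time). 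I expect \textbf{this last step — establishing finite-time blowup from the differential inequality, carefully handling the $-\alpha u^{(3)}$ damping term — to be the main obstacle}, since one must quantitatively guarantee the cubic nonlinearity overwhelms the linear damping before invoking a scalar comparison principle; everything after $s_f < \infty$ is established (namely $u, u', u'' \to \infty$) follows directly from monotonicity together with $u^{(3)}(s) \to \infty$ supplied by \Cref{lem:ThirdDerivativeGoesToInfty}.
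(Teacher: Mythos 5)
There are two genuine gaps. First, your claim that once $s_f<\infty$ is known (or even before), the divergence of $u$, $u'$, $u''$ ``follows essentially for free'' from monotonicity plus $u^{(3)}(s)\rightarrow\infty$ is false on a finite interval: $u^{(3)}$ can tend to $+\infty$ while being integrable up to $s_f$ (e.g.\ like $(s_f-s)^{-1/2}$), in which case $u''$, $u'$, $u$ would all converge. Ruling this out requires quantitative rate information, and that is exactly what the paper's proof supplies: it rescales by $\lambda=(u^{(3)})^{1/3}$, proves that $v_1=u'/\lambda$ and $v_2=u''/\lambda^2$ stay trapped between positive constants (\eqref{eq:BoundsOnV1}, \eqref{eq:BoundsOnV2}), and deduces $u'(s)\geq a_0/(l_1(s_f-s))$, whose non-integrability gives $u(s)\rightarrow\infty$. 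Nothing in your sketch produces such a lower bound.

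Second, the pivotal step of your finite-time argument --- choosing $s_2$ so that $C_1^{-1}(u')^3\geq 2\alpha\,u^{(3)}$ thereafter --- is unjustified and is not obtainable the way you indicate. Since $u^{(3)}$ and $(u')^3$ blow up at the same rate (in the paper's variables both are comparable to $\lambda^3$, with $u'=v_1\lambda$ and $v_1$ possibly as small as $\delta_0$), the desired inequality is a pure constants comparison between $C_1^{-1}$ and $\alpha$ with no slack; integrating the upper bound of~\eqref{eq:pGrowthBounds} only yields $u^{(3)}(s)\lesssim \tfrac{\beta}{3}(u'(s))^3+C_1\int(1+u''+(u')^3)$, whose constants have no relation to $2\alpha C_1$ (and the integral term is not controlled by $(u'(s))^3$ on a long interval), so the cubic need not beat the damping pointwise. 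The paper avoids this trap entirely: in the $\lambda$ variable the damping enters linearly, $\lambda'=\tfrac{1}{3\lambda^2}p-\tfrac{\alpha}{3}\lambda$, while the nonlinearity contributes $\sim\tfrac{\beta}{3}v_1^2v_2\,\lambda^2$, so for $\lambda$ large (guaranteed by \Cref{lem:ThirdDerivativeGoesToInfty}) one gets the Riccati inequality $\lambda'\geq l_0\lambda^2$, which forces $s_f<\infty$ without any comparison of $\alpha$ against $C_1$. Finally, your fallback ``standard fact'' that $y^{(3)}\geq c\,y^3$ with $y>0$ increasing has no global solution is not a routine citation --- comparison principles fail for third-order ODEs in general, and your iterated polynomial bootstrap (as you noticed mid-argument) does not yield finite-time blowup --- so even that last step would need a real proof. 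As it stands, the proposal's route does not close, and the rescaling/invariant-region mechanism of the paper (or some substitute providing the same two-sided rate control) is the missing ingredient.
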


\begin{proof}
  We set
  \begin{equation}\label{eq:RescaledVariables}
    \lambda(s) = (u^{(3)}(s))^{1/3}, v_1(s) = \frac{u'(s)}{\lambda(s)}, \text{ and } v_2(s) = \frac{u''(s)}{(\lambda(s))^2}
    \text{ for }
    s \in (0, s_f).
  \end{equation}
  Note that $\lambda(s), v_1(s), v_2(s) > 0$ for $s \in (0, s_f)$.
  We fix an arbitrary $s_0 \in (0, s_f)$.

  We compute
  \begin{equation*}
    \lambda' = \frac{1}{3 \lambda^2} \left(p(u, u', u'') - \alpha \lambda^3\right)
  \end{equation*}
  and
  \begin{equation*}
    v_1'
    =
    \lambda v_2 + \frac{\alpha v_1}{3} - \frac{v_1}{3 \lambda^3} p(u, u', u'').
  \end{equation*}
  Next, we show for very small $v_1$ and very large $\lambda$ that $v_1' > 0$.
  More precisely, for $\delta > 0$, $\lambda \geq \delta^{-1}$, and $v_1 \in (0, \delta]$, we have
  \begin{equation*}
    v_1'
    \geq
    \lambda v_2
    + \frac{\alpha}{3} v_1
    + o(\lambda v_2)
    + o(v_1)
    \text{ as }
    \delta \searrow 0.
  \end{equation*}
  Therefore, there exists a $\delta_0 > 0$ such that for $\lambda \geq \delta_0^{-1}$ and $v_1 \in (0, \delta_0]$ we have $v_1' > 0$.

  Next, we show for $\lambda \geq 1$ and very large $v_1$ that $v_1' < 0$.
  For $\delta > 0$, $\lambda \geq 1$, and $v_1 \geq \delta^{-1}$, we have
  \begin{equation*}
    v_1'
    \leq
    - \lambda v_1^3 v_2 \left(\frac{\beta}{3} + o(1)\right)
    - v_1^4 \left(\frac{1}{3 C_1} + o(1)\right)
    \text{ as }
    \delta \searrow 0.
  \end{equation*}
  Therefore, there exists a $\delta_1 > 0$ such that for $\lambda \geq 1$ and $v_1 \geq \delta_1^{-1}$, we have $v_1' < 0$.

  If $\inf_{s \in [s_0, s_f)} \lambda(s) \geq \max\{1, \delta_0^{-1}\}$ then we set $s_1 := s_0$.
  Otherwise, we set
  \begin{equation*}
    s_1 := \sup \left\{ s \in [s_0, s_f) \: \lambda(s) <  \max\{1, \delta_0^{-1}\} \right\}.
  \end{equation*}
  Note that $s_1 \in [s_0, s_f)$, since \Cref{lem:ThirdDerivativeGoesToInfty} implies that $\lambda(s) \rightarrow \infty$ as $s \nearrow s_f$.

  Putting this together, we have the following bounds on $v_1$:
  \begin{equation}\label{eq:BoundsOnV1}
    0 < a_0 := \min\{v_1(s_1), \delta_0\} \leq v_1(s) \leq \max\{v_1(s_1), \delta_1^{-1}\} =: a_1 \text{ for } s \in [s_1, s_f).
  \end{equation}
  We now consider $v_2$.
  We compute
  \begin{equation*}
    v_2'
    =
    \lambda
    + \frac{2 \alpha v_2}{3}
    - \frac{2 v_2}{3 \lambda^3} p(u, u', u'').
  \end{equation*}
  Next, we show for $\lambda \geq 1$ and very small $v_2$ that $v_2' > 0$.
  For $\delta > 0$, $\lambda \geq 1$, and $v_2 \in (0, \delta]$, we have
  \begin{equation*}
    \frac{v_2'}{\lambda}
    \geq
    1 + o(1)
    \text{ as }
    \delta \searrow 0.
  \end{equation*}
  Therefore, there exists a $\delta_2 > 0$ such that for $\lambda \geq 1$ and $v_2 \in (0, \delta_2]$ we have $v_2' > 0$.

  Next, we show for $\lambda \geq 1$ and very large $v_2$ that $v_2' < 0$.
  More precisely, for $\lambda \geq 1$, we have
  \begin{equation*}
    \frac{v_2'}{\lambda}
    \leq
    1 + C v_2 - \frac{2 \beta a_0^2}{3} v_2^2.
  \end{equation*}
  Therefore, there exists a $\Lambda_0 > 0$ such that for $v_2 \geq \Lambda_0$ we have $v_2' < 0$.

  Putting this together, we have the following bounds on $v_2$:
  \begin{equation}\label{eq:BoundsOnV2}
    0 < b_0 := \min\{v_2(s_1), \delta_2\} \leq v_2(s) \leq \min\{v_2(s_1), \Lambda_0\} =: b_1 \text{ for } s \in [s_1, s_f).
  \end{equation}
  Next, we focus on $\lambda$.
  For $\delta > 0$ and $\lambda \geq \delta^{-1}$, we have
  \begin{equation*}
    \lambda' = \lambda^2 \left(o(1) + \frac{\beta}{3} v_1^2 v_2\right)
    \text{ as }
    \delta \searrow 0.
  \end{equation*}
  Therefore, there exists a $\delta_3 > 0$ and $0 < l_0 \leq l_1$, such that for $\lambda \geq \delta_3^{-1}$ we have
  \begin{equation*}
    0 < l_0 \leq \frac{\lambda'}{\lambda^2} \leq l_1.
  \end{equation*}
  Since $\lambda(s) \rightarrow \infty$ as $s \nearrow s_f$ from \Cref{lem:ThirdDerivativeGoesToInfty}, we have $s_f < \infty$.
  Using~\eqref{eq:RescaledVariables},~\eqref{eq:BoundsOnV1}, and~\eqref{eq:BoundsOnV2}, we obtain $u^{(i)}(s) \rightarrow \infty$ as $s \nearrow s_f$ for $i \in \{1, 2, 3\}$.

  To finish up we show that $u(s) \rightarrow \infty$ as $s \nearrow s_f$.
  We define $\zeta \in C((0, s_f); \R)$ via
  \begin{equation*}
    \zeta(s) := \frac{\lambda'(s)}{(\lambda(s))^2}.
  \end{equation*}
  We set $\lambda_0 := \max\{1, \delta_0^{-1}, \delta_3^{-1}\}$.
  If $\inf_{s \in [s_0, s_f)} \lambda(s) \geq \lambda_0$ then we set $s_2 := s_0$.
  Otherwise, we set
  \begin{equation*}
    s_2 := \sup\left\{ s \in [s_0, s_f) \: \lambda(s) < \lambda_0 \right\}.
  \end{equation*}
  Note that $s_2 \in [s_1, s_f)$, since \Cref{lem:ThirdDerivativeGoesToInfty} implies that $\lambda(s) \rightarrow \infty$ as $s \nearrow s_f$, and $\zeta(s) \in [l_0, l_1]$ for $s \in [s_2, s_f)$.

  We have
  \begin{equation*}
    -\p_s \left(\frac{1}{\lambda(s)}\right) = \zeta(s) \text{ for } s \in (0, s_f),
  \end{equation*}
  and $1/\lambda(s) \rightarrow 0$ as $s \nearrow s_f$.
  For $s \in [s_2, s_f)$, we have
  \begin{equation*}
    \lambda(s) \geq \frac{1}{l_1 (s_f - s)},
  \end{equation*}
  and hence
  \begin{equation*}
    u'(s) \geq \frac{a_0}{l_1 (s_f - s)}.
  \end{equation*}
  Therefore, $u(s) \rightarrow \infty$ as $s \nearrow s_f$.
\end{proof}

We now apply \Cref{lem:ThirdDerivativeGoesToInfty} and \Cref{lem:ODEBlowup} to~\eqref{eq:FullODE} in the $d \in \{5, 6, 7\}$ case.

\begin{lemma}\label{lem:BiharmonicODEBlowup}
  Let $d \in \{5, 6, 7\}$, $s_0 \in \R$, and $\phi \in C^\infty([s_0, s_f); \R)$ be a solution of~\eqref{eq:FullODE}, where $s_f \in (-\infty, \infty]$ is the maximal time of existence of $\phi$.
  Moreover, suppose that $\phi'(s_0) > 0$, $\phi''(s_0) \geq c_\star$, and $\phi^{(3)}(s_0) \geq 0$.
  Then $s_f < \infty$, $\phi^{(3)}(s) > 0$ for $s \in (s_0, s_f)$, and $\phi^{(i)}(s) \rightarrow \infty$ as $s \nearrow s_f$ for $i \in \{0, 1, 2, 3\}$.
\end{lemma}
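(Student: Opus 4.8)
The strategy is to reduce Lemma~\ref{lem:BiharmonicODEBlowup} to the two abstract preparatory lemmas (Lemma~\ref{lem:ThirdDerivativeGoesToInfty} and Lemma~\ref{lem:ODEBlowup}) by exhibiting the right choice of $p$, $\alpha$, $\beta$, $C_1$, and $c_0$. First I would rewrite~\eqref{eq:FullODE} in the form~\eqref{eq:BlowupODE} by reading off from~\eqref{eq:FullAbstractODE}: set $\alpha := 2(d-4)$ (which is positive precisely for $d \geq 5$), and set
\begin{equation*}
  p(\xi_0, \xi_1, \xi_2) := q(\xi_0)\xi_2 - f(\xi_0) + \left(6\xi_2 + \tfrac{1}{2}q'(\xi_0)\right)\xi_1^2 + 2(d-4)\left(g(\xi_0)\xi_1 + \xi_1^3\right),
\end{equation*}
so that $\phi^{(4)} = p(\phi, \phi', \phi'') - \alpha\phi^{(3)}$, matching~\eqref{eq:BlowupODE}. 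Then I would verify the growth bounds~\eqref{eq:pGrowthBounds} hold with $c_0 := c_\star$ and an appropriate $\beta, C_1$.

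**The core estimate.** The heart of the matter is checking~\eqref{eq:pGrowthBounds} for $\xi_1 \geq 0$ and $\xi_2 \geq c_\star$. The dominant term in $p$ is $6\xi_1^2\xi_2$, so $\beta = 6$ is the natural choice. For the upper bound I would bound each remaining term crudely: $q(\xi_0)\xi_2 \leq C\xi_2$ since $q$ is bounded (and positive for $d \in \{5,6,7\}$), $-f(\xi_0) \leq C$, $\tfrac12 q'(\xi_0)\xi_1^2 \leq C\xi_1^2 \leq C(1 + \xi_1^3)$, $2(d-4)g(\xi_0)\xi_1 \leq C\xi_1 \leq C(1+\xi_1^3)$, and $2(d-4)\xi_1^3 \leq C\xi_1^3$; collecting constants gives the required $\beta\xi_1^2\xi_2 + C_1(1 + \xi_2 + \xi_1^3)$. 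The lower bound $6(\xi_2 - c_\star)\xi_1^2 + C_1^{-1}\xi_1^3 \leq p$ is where the definition of $c_\star$ is designed to do exactly the right thing. Writing
\begin{equation*}
  p = 6\xi_1^2\xi_2 + \tfrac12 q'(\xi_0)\xi_1^2 + 2(d-4)\xi_1^3 + q(\xi_0)\xi_2 - f(\xi_0) + 2(d-4)g(\xi_0)\xi_1,
\end{equation*}
the first defect term $\tfrac12 q'(\xi_0)\xi_1^2 \geq -6 c_\star \xi_1^2$ by the first entry in the $\max$ defining $c_\star$ (namely $c_\star \geq -\tfrac{1}{12}q'$), which is absorbed by lowering the coefficient of $\xi_1^2\xi_2$ from $6\xi_2$ to $6(\xi_2 - c_\star)$. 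The term $q(\xi_0)\xi_2 - f(\xi_0) \geq 0$ because $\xi_2 \geq c_\star \geq f(\xi_0)/q(\xi_0)$ and $q > 0$ (the second entry in the $\max$). The cubic term $2(d-4)\xi_1^3$ is positive and gives $C_1^{-1}\xi_1^3$. The only genuinely signed leftover is $2(d-4)g(\xi_0)\xi_1$, which can be negative; but $g$ is bounded and this term is at most linear in $\xi_1$, so it is dominated by $2(d-4)\xi_1^3$ except on a bounded set of $\xi_1$, where it is in turn dominated by the $6(\xi_2 - c_\star)\xi_1^2$ piece once one notes $\xi_2 - c_\star$ can be taken $\geq$ some positive quantity... — this last point needs care, and I would handle it by instead splitting the cubic: use half of $2(d-4)\xi_1^3$ to absorb the linear term (valid for $\xi_1$ large) and on $\xi_1$ bounded simply note $g(\xi_0)\xi_1$ is bounded below and can be absorbed into a redefinition of $c_0$ slightly larger than $c_\star$, or more cleanly, observe $6(\xi_2-c_0)\xi_1^2$ with a marginally larger $c_0$ still works. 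I expect this bookkeeping — getting the constants to line up without inflating $c_0$ past $c_\star$ — to be the main obstacle, and the clean fix is to absorb the linear term into the cubic for large $\xi_1$ and into a harmless constant $C_1$ for small $\xi_1$ (the lower bound in~\eqref{eq:pGrowthBounds} only requires $p \geq 6(\xi_2-c_0)\xi_1^2 + C_1^{-1}\xi_1^3$, and for bounded $\xi_1$ and $\xi_2 \geq c_0$ the right side is bounded above while $p \geq q(\xi_0)c_0 - f(\xi_0) - C \geq -C$, which one arranges by the third entry $c_\star \geq (2F)^{1/2}$ entering elsewhere — in fact the third entry is what guarantees the hypotheses needed to start the argument, via the energy).

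**Conclusion.** Once~\eqref{eq:pGrowthBounds} is verified, I apply Lemma~\ref{lem:ODEBlowup} directly: translating so that $s_0 = 0$, the hypotheses $\phi'(s_0) > 0$, $\phi''(s_0) - c_\star \geq 0$, $\phi^{(3)}(s_0) \geq 0$ with $\phi'(s_0) > 0$ positive are exactly the initial conditions ``$u'(0)$, $u''(0) - c_0$, $u^{(3)}(0)$ all non-negative with at least one positive'' required there. Lemma~\ref{lem:ODEBlowup} then yields $s_f < \infty$ and $\phi^{(i)}(s) \to \infty$ as $s \nearrow s_f$ for $i \in \{0,1,2,3\}$, and the intermediate conclusion $\phi^{(3)}(s) > 0$ on $(s_0, s_f)$ comes from Step~2 of Lemma~\ref{lem:ThirdDerivativeGoesToInfty} (which Lemma~\ref{lem:ODEBlowup} invokes). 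This completes the proof of Lemma~\ref{lem:BiharmonicODEBlowup}; the full Theorem~\ref{thm:BiharmBlowup} will follow afterward by combining this with the symmetry $\phi \mapsto -\phi$ for the $\phi''(s_0) \leq -c_\star$ case and an argument handling general $s_0$ with $\phi''(s_0) \geq c_\star$ but $\phi'(s_0)$, $\phi^{(3)}(s_0)$ not yet of the right sign, presumably by first showing the orbit enters the good cone.
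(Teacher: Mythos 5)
Your overall route is exactly the paper's: set $\alpha = 2(d-4)$, $\beta = 6$, $c_0 = c_\star$, read $p$ off from~\eqref{eq:FullAbstractODE}, verify~\eqref{eq:pGrowthBounds} for $\xi_1 \geq 0$, $\xi_2 \geq c_\star$, and then quote \Cref{lem:ThirdDerivativeGoesToInfty} and \Cref{lem:ODEBlowup}; the upper bound, the absorption of $\tfrac12 q'(\xi_0)\xi_1^2$ via the first entry of $c_\star$, and the use of $\xi_2 \geq c_\star \geq f(\xi_0)/q(\xi_0)$ with $q>0$ all match the paper, as does the final invocation of the two abstract lemmas.

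The one place you go wrong is the linear term. Your premise that $2(d-4)g(\xi_0)\xi_1$ ``can be negative'' is false in the relevant range: $g(\phi) = \tfrac12\left((d-1)\cos(2\phi) + 3d - 5\right) \geq \tfrac12\left(3d-5-(d-1)\right) = d-2 > 0$, so for $\xi_1 \geq 0$ this term is non-negative and is simply discarded in the lower bound --- which is precisely what the paper does, with no bookkeeping at all. Moreover, the patch you sketch would not survive scrutiny as written: the lower bound in~\eqref{eq:pGrowthBounds} has no additive constant into which a negative leftover could be absorbed; ``$\xi_2 - c_\star$ can be taken $\geq$ some positive quantity'' fails since $\xi_2 = c_\star$ is allowed; the claim that the right-hand side $6(\xi_2 - c_0)\xi_1^2 + C_1^{-1}\xi_1^3$ is ``bounded above'' for bounded $\xi_1$ is false because $\xi_2$ is unbounded; and enlarging $c_0$ beyond $c_\star$ would prove a strictly weaker statement than the lemma (and would also spoil its use in \Cref{thm:BiharmBlowup}, where $\phi''(s_0) = c_\star$ exactly is what the energy argument delivers). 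So the gap is real but trivially closed: note $g > 0$ for $d \in \{5,6,7\}$ and drop the term. Your closing remarks are otherwise accurate --- the third entry in the definition of $c_\star$ plays no role in this lemma (it enters only in \Cref{thm:BiharmBlowup} to force $\phi'(s_0) > 0$), and the positivity of $\phi^{(3)}$ on $(s_0, s_f)$ indeed comes from \Cref{lem:ThirdDerivativeGoesToInfty}.
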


\begin{proof}
  For our range of $d$,~\eqref{eq:FullODE} has the same structure of~\eqref{eq:BlowupODE}.
  Specifically, we take $\alpha := 2 (d - 4)$, $\beta := 6$, $c_0 := c_\star$, and
  \begin{equation*}
    p(\xi_0, \xi_1, \xi_2) :=
    q(\xi_0) \xi_2 - f(\xi_0)
    + \beta \xi_2 \xi_1^2 + \frac{1}{2} q'(\xi_0) \xi_1^2
    + \alpha g(\xi_0) \xi_1
    + \alpha \xi_1^3,
  \end{equation*}
  where $q, f$, and $g$ are from~\eqref{eq:FullAbstractODESpecificSubstitutions}.
  Next, we verify the growth conditions~\eqref{eq:pGrowthBounds}, and hence we restrict $\xi_1 \geq 0$ and $\xi_2 \geq c_0$.
  First we consider the lower bound:
  \begin{align*}
    p(\xi_0, \xi_1, \xi_2)
    &\geq
      q(\xi_0) \left(\xi_2 - \frac{f(\xi_0)}{q(\xi_0)}\right)
      + \beta \left(\xi_2 + \frac{1}{2\beta} q'(\xi_0)\right) \xi_1^2
      + \alpha \xi_1^3
    \\
    &\geq
      q(\xi_0) \left(\xi_2 - c_0\right)
      + \beta \left(\xi_2 - c_0\right) \xi_1^2
      + \alpha \xi_1^3
    \\
    &\geq
      \beta \left(\xi_2 - c_0\right) \xi_1^2
      + \alpha \xi_1^3.
  \end{align*}
  We now consider the upper bound.
  It is straightforward to show that
  \begin{equation*}
    p(\xi_0, \xi_1, \xi_2)
    \leq
    \beta \xi_1^2 \xi_2
    + C \left(1 + \xi_2 + \xi_1^3\right).
  \end{equation*}
  This finishes our verification of~\eqref{eq:pGrowthBounds}.

  Since $\phi'(s_0) > 0$, $\phi''(s_0) \geq c_0$, and $\phi^{(3)}(s_0) \geq 0$, our conclusion follows from \Cref{lem:ThirdDerivativeGoesToInfty} and \Cref{lem:ODEBlowup}.
\end{proof}

\Cref{thm:BiharmBlowup} is a simple corollary of this for orbits in $W^u(0)$.
We prove it next.

\begin{proof}[Proof of \Cref{thm:BiharmBlowup}]
  Due to invariance of~\eqref{eq:FullODE} under $\phi \mapsto -\phi$, it suffices to only prove this theorem in the $\phi''(s_0) \geq c_\star$ case.
  Without loss of generality, we assume that $s_0$ is the first $s$ such that $\phi''(s) \geq c_\star$.
  Therefore, $\phi''(s_0) = c_\star$ and $\phi^{(3)}(s_0) \geq 0$.
  Moreover, $\Energy[\phi](s_0) > 0$ follows from $\Energy[\phi](s) \rightarrow 0$ as $s \rightarrow -\infty$ and~\eqref{eq:EnergyRateOfChange}.
  Furthermore, $U[\phi](s_0) \leq 0$, $T[\phi](s_0) > 0$, and $\phi''(s_0) > 0$, and hence $\phi'(s_0) > 0$.
  Our conclusion now follows from \Cref{lem:BiharmonicODEBlowup}.
\end{proof}

\section{Exiting \texorpdfstring{$\cC$}{C}}\label{sect:ExitingC}
In this section we restrict our attention to $d = 5$.
Before we explain the purpose of this section, we have some definitions to present.
First we define $\cF : [0, \pi] \rightarrow \R$ via
\begin{equation*}
  \cF(\phi_0) :=
  \left\{
    \begin{aligned}
      &U_0(\phi_0) \text{ for } \phi_0 \in [0, \pi/2],
      \\
      &2 \sqrt{6} \text{ for } \phi_0 \in (\pi/2, \pi],
    \end{aligned}
  \right.
\end{equation*}
where $U_0(\phi_0) = 2 \sqrt{6} \sin\phi_0$.
Now we use $\cF$ to define the central object of study in this section,
\begin{equation}\label{eq:TheMiddleRegion}
  \cC := \{ (x, y) \in \R^2 \: 0 < x < \pi \text{ and } -\cF(\pi - x) < y < \cF(x) \}.
\end{equation}
The main purpose of this section is two show that a solution to~\eqref{eq:5dODE} in $W^u(0)$ that exits $\cC$ must blowup in finite $s$-time.

Before we start, we need some set up.
For the following, we suppose that $\phi_0 \in [0, \pi/2]$.
We set $y_0 = U_0(\phi_0)$.
Note that $U(\phi_0, y_0) = 0$.

Consider the tangent line to $U_0$ at $\phi_0$, that is, $y_{\phi_0}(\phi) := U_0'(\phi_0) (\phi - \phi_0) + y_0$.
We shift the coordinate system by considering $w(s) := \phi''(s) - y_{\phi_0}(\phi(s))$, where $\phi$ solves~\eqref{eq:5dODE}.
We see that $\phi$ and $w$ satisfy the second-order system
\begin{equation}\label{eq:PhiWSystem}
  \left\{
    \begin{aligned}
      \phi''(s) &= w(s) + U'_0(\phi_0)(\phi - \phi_0) + y_0,
      \\
      w''(s) &= a(\phi_0, \phi(s), \phi'(s)) \, w(s) - 2w'(s) + P(\phi_0, \phi(s), \phi'(s)),
    \end{aligned}
  \right.
\end{equation}
where
\begin{equation}\label{eq:WOdeCoeffs}
  \left\{
    \begin{aligned}
      a &= 4 \cos(2 \phi) - 2 \sqrt{6} \cos(\phi_0) + 9 + 6 (\phi')^2, \text{ and}
      \\
      P &=
      -12 \sin(2 \phi _0) - 12 \left(\phi - \phi _0 + \sin(2 \phi) \right) + 2\sqrt{6} \left(\phi - \phi _0\right) (4 \cos(2\phi) + 9) \cos\phi_0
      \\
      &\quad
      + 12 \left(\phi _0 - \phi\right) \cos(2 \phi_0) + 2\sqrt{6}(4 \cos(2\phi) + 9) \sin\phi_0
      \\
      &\quad
      + \left(4 \cos(2\phi) - 4\sqrt{6} \cos\phi _0 + 10\right) \phi'
      \\
      &\quad
      + \left(12 \sqrt{6} \left(\sin \phi_0 + \left(\phi -\phi_0\right) \cos \phi _0 \right)- 4 \sin(2\phi)\right) (\phi')^2
      + 2 (\phi')^3.
    \end{aligned}
  \right.
\end{equation}
Observe that $P(\phi_0, \phi, v)$ is a cubic polynomial in $v$ whose coefficients are functions of $\phi_0$ and $\phi$.
We define
\begin{equation*}
  \phi_{\text{max}} :=
  \left\{
    \begin{aligned}
      &\frac{2 \sqrt{6} - y_0}{U_0'(\phi_0)} + \phi_0 \text{ for } \phi_0 \in [0, \pi/2), \text{ and}
      \\
      &\frac{\pi}{2} \text{ for } \phi_0 = \frac{\pi}{2}.
    \end{aligned}
  \right.
\end{equation*}
Note that $\phi_{\text{max}}$ is defined so that $y_{\phi_0}(\phi_{\text{max}}) = 2\sqrt{6}$.
Next, we prove some bounds on $a$ and $P$.

\begin{lemma}\label{lem:CoeffBoundsOnWODE}
  We have the following bounds on $a$ and $P$ as defined in~\eqref{eq:WOdeCoeffs}:
  \begin{enumerate}[(i)]
  \item
    $a(\phi_0, \phi, v) \geq 0.1$ for all $\phi_0, \phi, v \in \R$; and
  \item
    $P(\phi_0, \phi, v) \geq 0$ for $0 \leq \phi_0 \leq \pi/2$, $\phi_0 \leq \phi \leq \phi_{\text{max}}$, and $v \geq 0$, with equality only when $\phi_0$, $\phi$, and $v$ are all zero.
  \end{enumerate}
\end{lemma}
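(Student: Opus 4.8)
The plan is to prove the two bounds essentially by reduction to elementary (if somewhat lengthy) estimates on trigonometric polynomials, with the genuinely delicate part of (ii) handled by the interval-analysis techniques the paper has already flagged as computer-assisted.

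\emph{Part (i).} This one is immediate and does not need the computer. From the formula $a = 4\cos(2\phi) - 2\sqrt{6}\cos\phi_0 + 9 + 6(\phi')^2$, I would simply bound each term from below: $4\cos(2\phi) \geq -4$, $-2\sqrt{6}\cos\phi_0 \geq -2\sqrt{6}$, and $6(\phi')^2 \geq 0$, giving $a \geq 9 - 4 - 2\sqrt{6} = 5 - 2\sqrt{6} \approx 0.101$. Since $5 - 2\sqrt 6 > 0.1$, the claimed bound $a \geq 0.1$ follows. (It is worth noting that $a$ is exactly the coefficient $q(\phi) + 6(\phi')^2$ with $d=5$ shifted by the $-2\sqrt 6 \cos\phi_0$ correction, so positivity here is the $d=5$ instance of the same positivity already used in \Cref{thm:BiharmonicInUnstable}.)

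\emph{Part (ii), structure.} Fix $\phi_0 \in [0,\pi/2]$ and $\phi \in [\phi_0, \phi_{\max}]$, and write $P(\phi_0,\phi,v)$ as a cubic in $v$: $P = c_3 v^3 + c_2(\phi_0,\phi) v^2 + c_1(\phi_0,\phi) v + c_0(\phi_0,\phi)$ with $c_3 = 2 > 0$. I would prove $P \geq 0$ for $v \geq 0$ by establishing three facts: (a) the constant term $c_0(\phi_0,\phi) = P(\phi_0,\phi,0) \geq 0$ on the stated region, with equality only at $\phi_0=\phi=0$; (b) the linear coefficient $c_1(\phi_0,\phi) = 4\cos(2\phi) - 4\sqrt6\cos\phi_0 + 10 \geq 0$ on the region — again a trivial termwise bound, $c_1 \geq 10 - 4 - 4\sqrt 6$... wait, that is negative, so (b) actually needs the constraint $\phi_0\le\phi\le\phi_{\max}$ and is not trivial; and (c) a lower bound on $c_2$. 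Given (a) and $c_1 \ge 0$, the cubic has all coefficients nonnegative for $v\ge 0$ as soon as $c_2 \ge 0$, and then $P\ge c_0 \ge 0$ with equality forcing $v$ and $\phi_0,\phi$ to vanish; if $c_2 < 0$ somewhere we instead complete the argument by noting $c_3 v^3 + c_2 v^2 \ge v^2(c_3 v + c_2)$ and splitting on whether $v$ is large or small relative to $|c_2|/c_3$, absorbing the negative contribution into $c_0$ and $c_1 v$. The cleanest route, and the one I expect the paper to take, is to verify directly that $c_0 \ge 0$, $c_1 \ge 0$, and $c_2 \ge 0$ simultaneously on the compact parameter set $\{(\phi_0,\phi): 0\le\phi_0\le\pi/2,\ \phi_0\le\phi\le\phi_{\max}\}$.

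\emph{The main obstacle.} The hard step is (a), showing the \emph{constant term} $P(\phi_0,\phi,0)\ge 0$ (and the simultaneous sign of $c_1,c_2$) on that two-dimensional region, because $\phi_{\max}$ itself depends on $\phi_0$ in a nonlinear way (through $\phi_{\max} = \phi_0 + (2\sqrt6 - y_0)/U_0'(\phi_0)$ with $y_0 = 2\sqrt6\sin\phi_0$, $U_0'(\phi_0) = 2\sqrt6\cos\phi_0$, so $\phi_{\max} = \phi_0 + (1-\sin\phi_0)/\cos\phi_0$), and near $\phi_0 = \pi/2$ this blows up, making the region awkward. Here I would not try for a closed-form inequality; instead I would substitute $\phi = \phi_0 + t$ with $t \in [0, (1-\sin\phi_0)/\cos\phi_0]$, reparametrise (e.g.\ by $\theta = \phi_{\max} - \phi \ge 0$ and a normalised version of $t$) to get a bounded box of parameters, and then verify the resulting trigonometric-polynomial inequalities by interval arithmetic, exactly as announced in the introduction. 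The equality analysis at the corner $\phi_0 = \phi = 0$: there $y_0 = 0$, $P(0,0,v) = 2v^3$, which vanishes at $v=0$ and is positive otherwise, and a local expansion of $c_0$ near $(0,0)$ shows $c_0$ is strictly positive in a punctured neighbourhood, so combined with the global interval bound away from the corner we get the claimed ``equality only at zero.'' I expect the writeup to isolate the delicate trigonometric inequalities as the computer-assisted lemmas and keep the reduction above as the human-readable scaffolding.
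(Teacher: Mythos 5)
Your part (i) matches the paper, and much of your scaffolding for (ii) (treating $P$ as a cubic in $v$, the reparametrisation $z=(\phi-\phi_0)/(\phi_{\max}-\phi_0)$, Taylor expansions with interval enclosures near the corner $\phi_0=\phi=0$, interval arithmetic elsewhere) is exactly what the paper does. But your ``cleanest route'' --- verifying $c_0\ge 0$, $c_1\ge 0$, $c_2\ge 0$ simultaneously on the admissible region --- fails, because the $v^1$-coefficient is genuinely negative on part of that region. Concretely, at $\phi_0=0$ we have $\phi_{\max}=1$, and at $(\phi_0,\phi)=(0,1)$ the coefficient is $c_1=4\cos 2+10-4\sqrt6\approx-1.46<0$; so an interval computation set up to certify $c_1\ge0$ would simply report a counterexample, and your argument as structured does not close. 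Your only fallback is aimed at the wrong coefficient: you provide for $c_2<0$ (which does not occur --- the paper proves $c_2\ge0$ with equality only at the origin, giving convexity in $v$), but make no provision for $c_1<0$, which is the case that actually arises.

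The paper's fix is the step you are missing: it locates, by a divide-and-conquer interval computation in $(\phi_0,z)$-coordinates, a box $A=[0,783/1024]\times[779/1024,1]$ containing the set where $c_1\le 0.01$, and on $A$ it discards the $2v^3$ term and verifies by interval arithmetic that the quadratic $c_0+c_1v+c_2v^2$ has minimum at least $0.5$ in $v$ (equivalently, a discriminant-type bound $c_1^2<4c_0c_2$ there). You would need this, or an equivalent absorption of the negative $c_1v$ term into $c_0$ and $c_2v^2$, to complete the proof. A minor further slip: $\phi_{\max}=\phi_0+(1-\sin\phi_0)/\cos\phi_0$ does not blow up as $\phi_0\to\pi/2$; it tends to $\pi/2$, so the parameter region is already compact --- the $z$-reparametrisation is convenient but not forced on you by any unboundedness.
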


The following proof is computer-assisted using interval arithmetic, see, for example,~\cite{AlefeldHerzbergerInterval},~\cite{KulischMirankerInterval},~\cite{Moore66Interval}, or~\cite{Moore79Interval}.
We outline, at a high-level, the steps and intermediate results of the computations.

\begin{proof}
  Part (i) easily follows from the expression for $a$ in~\eqref{eq:WOdeCoeffs}.

  Now we focus on Part (ii).
  We first observe that $P(0, 0, v) = (14 - 4\sqrt{6}) v + 2 v^3$, and hence $P(0, 0, 0) = 0$.
  In what follows, we view $P(\phi_0, \phi, v)$ as a cubic polynomial in $v$, and analyse the coefficients separately.

  {\bf Coefficient of $v^0$}.
  Using a standard branch-and-bound algorithm for interval arithmetic, we show that this coefficient is bounded below by $0.01$ for
  \begin{equation*}
    0.4 \leq \phi_0 \leq \frac{\pi}{2} \text{ and } 0 \leq \phi \leq \frac{\pi}{2}.
  \end{equation*}
  Therefore, we now only need to consider the case where $\phi_0 < 0.4$.
  We introduce the new variable
  \begin{equation*}
    z := \frac{\phi - \phi_0}{\phi_{\text{max}} - \phi_0}.
  \end{equation*}
  Note that the condition $\phi_0 \leq \phi \leq \phi_{\text{max}}$ is equivalent to $0 \leq z \leq 1$.
  Working in $(\phi_0, z)$-coordinates we show, separately, that this coefficient is bounded below by $0.01$ for the two regions
  \begin{equation*}
    0.01 \leq \phi_0 \leq 0.4 \text{ and } 0 \leq z \leq 1,
  \end{equation*}
  and
  \begin{equation*}
    0 \leq \phi_0 \leq 0.4 \text{ and } 0.01 \leq z \leq 1.
  \end{equation*}
  Now, we focus on the region
  \begin{equation}\label{eq:Phi0ZBounbdsAtOrigin}
    0 \leq \phi_0 \leq 0.01 \text{ and } 0 \leq z \leq 0.01.
  \end{equation}
  For this we switch back to $(\phi_0, \phi)$-coordinates.
  The bounds~\eqref{eq:Phi0ZBounbdsAtOrigin} imply $0 \leq \phi \leq 0.021$.
  We use Taylor expansion around zero of $\sin$ up to sixth-order and $\cos$ up to fifth-order, and enclose the remainder terms in intervals.
  After this expansion, we compute the following enclosure of this coefficient over the region $0 \leq \phi_0 \leq 0.01$ and $0 \leq \phi \leq 0.021$,
  \begin{equation*}
    [13.2121, 13.24] \phi_0^3
    +
    [15.673, 15.6867] \phi.
  \end{equation*}
  Therefore, this coefficient is strictly positive over the range of $\phi_0$ and $\phi$ supposed in this lemma, except when $\phi_0$ and $\phi$ are both zero.

  {\bf The coefficient of $v^2$.}
  We show that this coefficient is non-negative for $0 \leq \phi_0 \leq \pi/2$ and $0 \leq \phi \leq \pi/2$, and is zero only when $\phi_0 = \phi = 0$.
  Using the same standard branch-and-bound algorithm for interval arithmetic, we show, separately, that this coefficient is bounded below by $0.01$ over the regions
  \begin{equation*}
    0.11 \leq \phi_0 \leq \frac{\pi}{2} \text{ and } 0 \leq \phi \leq \frac{\pi}{2},
  \end{equation*}
  and
  \begin{equation*}
    0 \leq \phi_0 \leq \frac{\pi}{2} \text{ and } 0.0006 \leq \phi \leq \frac{\pi}{2}.
  \end{equation*}
  It now suffices to consider the region
  \begin{equation*}
    0 \leq \phi_0 \leq 0.11 \text{ and } 0 \leq \phi \leq 0.0006.
  \end{equation*}
  In exactly the same way as for the $v^0$ coefficient, we expand $\cos$ and $\sin$ into their sixth- and fifth-order, respectively, Taylor series around zero, and enclose the remainder terms in intervals.
  Using this expansion, we compute the following enclosure of this coefficient over this region,
  \begin{equation*}
    [9.76536, 9.83056] \phi_0^3 + [21.2159, 21.4586] \phi.
  \end{equation*}
  Therefore, the polynomial $v \mapsto P(\phi_0, \phi, v)$ is convex, and strictly convex when $(\phi_0, \phi) \neq 0$.

  {\bf The coefficient of $v^1$.}
  If $\phi_0$ and $\phi$ are such that this coefficient is non-negative then the above computations yield the conclusion of this lemma.
  Using the same standard branch-and-bound algorithm for interval arithmetic, we show that this coefficient is bounded below by $0.01$ in the region
  \begin{equation*}
    1 \leq \phi_0 \leq \frac{\pi}{2} \text{ and } 0 \leq \phi \leq \frac{\pi}{2}.
  \end{equation*}
  Therefore, we may restrict our attention to $\phi_0 < 1$.

  Now, similarly to when we were considering the coefficient of $v^0$, we work in $(\phi_0, z)$-coordinates.
  Using a standard divide-and-conquer algorithm, we compute the following enclosure of the subset of $[0, 1] \times [0, 1]$ in $(\phi_0, z)$-coordinates in which the $v^1$-coefficient is bounded above by $0.01$,
  \begin{equation*}
    A := [0, 783/1024] \times [779/1024, 1],
  \end{equation*}
  and hence we focus our attention here.

  Now we drop the $2 v^3$ term from $P(\phi_0, \phi, v)$, and estimate the minimum of the remaining quadratic in $v$, after fixing $(\phi_0, \phi) \in A$, using the same standard branch-and-bound algorithm for interval arithmetic used above.
  Note that this forms a lower bound of $P(\phi_0, \phi, v)$ for the values of the parameters of interest to us.
  These computations show that for $(\phi_0, \phi) \in A$ the minimum value of this quadratic in $v$ is greater than or equal to $0.5$.
\end{proof}

Next, we show that if an orbit in $W^u(0)$ exits $\cC$ through the top-left or bottom-right, then it must blowup in finite $s$-time.

\begin{lemma}\label{lem:LeftSideExit}
  Let $\phi \in C^\infty((-\infty, s_f); \R)$ be a solution to~\eqref{eq:5dODE}, where $s_f \in (-\infty, \infty]$ is the maximal time of existence of $\phi$.
  Suppose that $\Energy[\phi](s) > 0$ for all $s \in (-\infty, s_f)$.
  Moreover, suppose that there exists $s_0 < s_1 < s_f$ such that $(\phi(s), \phi''(s)) \in \cC$ for all $s \in [s_0, s_1)$, $(\phi(s_1), \phi''(s_1)) \in \p\cC$, and either:
  \begin{enumerate}[(i)]
  \item
    $\phi(s_1) \in [0, \pi/2]$, and $\phi''(s_1) \geq 0$; or
  \item
    $\phi(s_1) \in [\pi/2, \pi]$, and $\phi''(s_1) \leq 0$.
  \end{enumerate}
  Then there exists a $s_\star \in [s_1, s_f)$ such that $\phi''(s_\star) = 2\sqrt{6}$ in the case of (i), or $\phi''(s_\star) = -2\sqrt{6}$ in the case of (ii).
  Therefore, $s_f < \infty$ by \Cref{thm:BiharmBlowup}.
\end{lemma}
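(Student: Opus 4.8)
The plan is to exploit the second-order system~\eqref{eq:PhiWSystem} together with the sign information on $a$ and $P$ from \Cref{lem:CoeffBoundsOnWODE}. By the $\phi \mapsto \pi - \phi$ symmetry of~\eqref{eq:5dODE} (which reflects $\cC$ about the vertical line $x = \pi/2$ and negates $y$), it suffices to treat case~(i), so assume $\phi(s_1) \in [0, \pi/2]$ and $\phi''(s_1) \geq 0$. Since $(\phi(s), \phi''(s)) \in \cC$ for $s \in [s_0, s_1)$ and $(\phi(s_1), \phi''(s_1)) \in \p\cC$ with $\phi''(s_1) \geq 0$, the boundary point the orbit hits must lie on the upper boundary $y = \cF(x)$; in the regime $x = \phi(s_1) \in [0, \pi/2]$ this means $\phi''(s_1) = U_0(\phi(s_1)) = 2\sqrt{6}\sin(\phi(s_1))$. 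Set $\phi_0 := \phi(s_1)$ and adopt the shifted coordinate $w(s) := \phi''(s) - y_{\phi_0}(\phi(s))$ from the setup preceding the lemma, so that $w(s_1) = 0$ (the tangent line agrees with $\phi''$ at that instant) and $(\phi, w)$ solves~\eqref{eq:PhiWSystem}.

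First I would record the initial data at $s_1$: we have $\phi(s_1) = \phi_0$, $w(s_1) = 0$, and I claim $\phi'(s_1) \geq 0$ and $w'(s_1) \geq 0$. The sign of $\phi'(s_1)$ follows from the positivity of the energy: since $\Energy[\phi](s_1) > 0$ and $U[\phi](s_1) = F(\phi_0) - \tfrac12 (2\sqrt6\sin\phi_0)^2 = 0$ by the choice $y_0 = U_0(\phi_0)$ and the form~\eqref{eq:PotentialBiharmonicODE} of $F$, we get $T[\phi](s_1) > 0$; combined with $\phi''(s_1) = 2\sqrt6\sin\phi_0 \geq 0$ this forces $\phi'(s_1) > 0$ (the bracketed factor in $T$ is $\phi^{(3)} + 2\phi'' - \tfrac12 q(\phi)\phi' - \tfrac32(\phi')^3$, and one argues as in the proof of \Cref{thm:BiharmBlowup} that $\phi'(s_1) \leq 0$ would make $T$ non-positive — this needs a short sign chase but no real computation). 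For $w'(s_1)$ one uses that the orbit enters $\p\cC$ from inside $\cC$, so $\tfrac{d}{ds}\big(\phi''(s) - \cF(\phi(s))\big) \geq 0$ at $s_1$; since along the graph of $U_0$ the tangent line $y_{\phi_0}$ touches $U_0$ from below to second order, this transversality translates into $w'(s_1) \geq 0$ (again after unwinding definitions).

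The heart of the argument is then a trapping/barrier step. Define
\[
  s_2 := \sup\bigl\{ \sigma \in [s_1, s_f) : \phi_0 \leq \phi(s) \leq \phi_{\text{max}},\ \phi'(s) > 0,\ w(s) \geq 0,\ w'(s) \geq 0 \ \text{for all } s \in [s_1, \sigma) \bigr\}.
\]
On $[s_1, s_2)$ we are exactly in the parameter range $0 \leq \phi_0 \leq \pi/2$, $\phi_0 \leq \phi \leq \phi_{\text{max}}$, $\phi' \geq 0$ where \Cref{lem:CoeffBoundsOnWODE} applies, so $a(\phi_0,\phi,\phi') \geq 0.1 > 0$ and $P(\phi_0,\phi,\phi') \geq 0$. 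From the $w$-equation in~\eqref{eq:PhiWSystem}, $w'' = a\,w - 2w' + P$, so wherever $w \geq 0$ and we can control the $-2w'$ term we push $w''$ up; more carefully, I would show the region $\{w \geq 0,\ w' \geq 0\}$ is forward-invariant: if $w' = 0$ and $w \geq 0$ then $w'' = a w + P \geq 0$, and if $w = 0$ then since $\phi', \phi - \phi_0 \geq 0$ we have $\phi'' = y_{\phi_0}(\phi) \geq y_0 = U_0(\phi_0) \geq 0$, so $\phi'$ keeps increasing; meanwhile $\phi' > 0$ is maintained because $\phi'' \geq 0$ on this region. Hence $\phi$ is strictly increasing and convex-ish, and since $\phi$ cannot increase past $\phi_{\text{max}} \leq \pi/2$ within $\cC$ — wait: actually the point is that the exit from this box must occur through $\phi = \phi_{\text{max}}$, because $w, w', \phi'$ cannot return to zero. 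At $s_2$ we therefore have $\phi(s_2) = \phi_{\text{max}}$, and then
\[
  \phi''(s_2) = w(s_2) + y_{\phi_0}(\phi_{\text{max}}) = w(s_2) + 2\sqrt6 \geq 2\sqrt6,
\]
by the defining property $y_{\phi_0}(\phi_{\text{max}}) = 2\sqrt6$ and $w(s_2) \geq 0$. So there is $s_\star := s_2 \in [s_1, s_f)$ with $\phi''(s_\star) \geq 2\sqrt6$; by continuity (and since $\phi''(s_1) \leq 2\sqrt6$) we may take $s_\star$ with $\phi''(s_\star) = 2\sqrt6$ exactly. Finally \Cref{thm:BiharmBlowup} with $c_\star = 2\sqrt6$ gives $s_f < \infty$.

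The main obstacle I anticipate is making the forward-invariance of the box $\{\phi_0 \leq \phi \leq \phi_{\text{max}},\ \phi' > 0,\ w \geq 0,\ w' \geq 0\}$ airtight: one must rule out $\phi'$ hitting $0$ (handled by $\phi'' \geq 0$, which itself needs $w \geq 0$ and $\phi \geq \phi_0$), rule out $w$ dipping negative (handled by $w'' \geq 0$ when $w' = 0$, via \Cref{lem:CoeffBoundsOnWODE}(ii) and $a > 0$), and rule out $w'$ dipping negative, which is the delicate one because the $-2w'$ damping term works against us — the resolution is that $w' = 0$ together with $w \geq 0$, $P \geq 0$ gives $w'' \geq 0$ immediately, so $w'$ is actually trapped at or above $0$. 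A secondary subtlety is the endpoint case $\phi_0 = \pi/2$, where $\phi_{\text{max}} = \pi/2$ and $U_0'(\phi_0) = 0$, so $y_{\phi_0} \equiv 2\sqrt6$ is constant and the argument degenerates pleasantly: $w = \phi'' - 2\sqrt6$, $w(s_1) = 0$ means $\phi''(s_1) = 2\sqrt6$ and we are already done with $s_\star = s_1$. Once these invariance checks are in place, the conclusion and the appeal to \Cref{thm:BiharmBlowup} are immediate.
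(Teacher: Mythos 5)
Your proposal follows essentially the same route as the paper: reduce to case (i) by the $\phi \mapsto \pi-\phi$ symmetry, dispose of $\phi_0 = \pi/2$ separately, pass to the shifted system~\eqref{eq:PhiWSystem}, use \Cref{lem:CoeffBoundsOnWODE} to make $\{w \geq 0,\, w' \geq 0,\, \phi' > 0\}$ forward invariant while $\phi \in [\phi_0, \phi_{\text{max}}]$, and conclude that $\phi$ is pushed up to $\phi_{\text{max}}$, where $\phi'' = w + y_{\phi_0}(\phi_{\text{max}}) \geq 2\sqrt 6$. The paper phrases the last step as a contradiction argument (assume $\phi'' < 2\sqrt 6$ forever, deduce $s_f = \infty$ and $\phi' \geq \phi'(s_2) > 0$ for all later times, contradicting $\phi < \phi_{\text{max}}$), while you phrase it as an exit-through-the-face-$\phi = \phi_{\text{max}}$ argument; these are equivalent, but to make yours airtight you must still rule out the orbit staying in your box for all time (and rule out blowup inside the box): the missing sentence is exactly the paper's observation that $\phi'' \geq 0$ in the box forces $\phi' \geq \phi'(s_1) > 0$, so $\phi$ grows at a definite rate and cannot remain below $\phi_{\text{max}} \leq \pi$ indefinitely, while $|\phi|, |\phi''|$ bounded excludes blowup there.

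The one genuine gap is your justification of $\phi'(s_1) > 0$. You claim it follows from $T[\phi](s_1) > 0$ together with $\phi''(s_1) \geq 0$ by a ``short sign chase as in \Cref{thm:BiharmBlowup}'', but that chase does not close with these two facts alone: with $\phi' < 0$, $\phi'' \geq 0$ and $\phi^{(3)}$ very negative, the bracket $\phi^{(3)} + 2\phi'' - \tfrac12 q(\phi)\phi' - \tfrac32(\phi')^3$ is negative and $T$ is positive, so no contradiction arises. In \Cref{thm:BiharmBlowup} the chase works because one additionally knows $\phi^{(3)}(s_0) \geq 0$; here the analogous input is the inequality $w'(s_1) \geq 0$, i.e.\ $\phi^{(3)}(s_1) \geq U_0'(\phi_0)\,\phi'(s_1)$, which you derive but never use. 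Inserting it, and noting $2\sqrt6\cos\phi_0 - \tfrac12 q(\phi_0) = -4\cos^2\phi_0 + 2\sqrt6\cos\phi_0 - \tfrac52 < 0$, one gets that $\phi'(s_1) \leq 0$ forces the bracket to be nonnegative and hence $T[\phi](s_1) \leq 0$, the desired contradiction; this is exactly how the paper concludes $\phi'(s_1) > 0$ from the two inequalities $T[\phi](s_1) > 0$ and $w'(s_1) \geq 0$. Your derivation of $w'(s_1) \geq 0$ itself is fine in substance, though the clean reason is concavity of $U_0$ on $[0,\pi/2]$: the tangent line lies above $\cF$, so $w \leq 0$ on $[s_0, s_1]$ with $w(s_1) = 0$.
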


\begin{proof}
  Due the symmetry of our situation under the transformation $\phi \mapsto \pi - \phi$, we assume without loss of generality that we are in the situation described by (i).

  We have $\phi''(s_1) = U_0(\phi(s_1))$.
  Using the set up from the beginning of this section, we set $\phi_0 = \phi(s_1)$ and $y = U_0(\phi_1)$.
  If $\phi_1 = \pi/2$ then $\phi''(s_1) = 2\sqrt{6}$, and we are done.
  Therefore, it suffices to focus on the case $0 \leq \phi_0 < \pi/2$.

  We work with the $(\phi, w)$-system as defined in~\eqref{eq:PhiWSystem}.
  Observe that $w(s_1) = 0$, and $w'(s_1) \geq 0$, since $w(s_1) < 0$ for all $s \in [s_0, s_1)$.
  From~\eqref{eq:EnergySplit}, we see that, in our context,
  \begin{equation*}
    T[\phi] = \phi' \left(\phi^{(3)} + 2 \phi'' - \frac{1}{2} (4 \cos(2 \phi) + 9) \phi' - \frac{3}{2} (\phi')^3\right).
  \end{equation*}
  Because $\phi$ cannot be trivial, we know that $T[\phi](s_1) > 0$, since $\Energy[\phi](s_1) > 0$ and $U[\phi](s_1) = 0$.
  Moreover, $w'(s_1) \geq 0$ implies $\phi^{(3)}(s_1) \geq U_0'(\phi_0) \phi'(s_1)$.
  We conclude from these two inequalities that $\phi'(s_1) > 0$.

  From~\eqref{eq:PhiWSystem} and \Cref{lem:CoeffBoundsOnWODE}, we can find a $s_2 \in (s_1, s_f)$ such that $\phi'(s_2), w(s_2), w'(s_2) > 0$ and $\phi(s_2) \in (\phi_0, \phi_{\text{max}})$.
  Hoping for a contradiction, we assume that $\phi''(s) < 2\sqrt{6}$ for all $s \in (s_1, s_f)$.
  Therefore, if $w(s) \geq 0$ then $\phi(s) < \phi_{\text{max}}$.

  From~\eqref{eq:PhiWSystem} and \Cref{lem:CoeffBoundsOnWODE}, we see that if $\phi', w, w' > 0$ and $\phi \in (\phi_0, \phi_{\text{max}})$, then $\phi'' > 0$ and
  \begin{equation}
    w'' > c_1 w - 2w',
  \end{equation}
  where $c_1 > 0$.
  Next, we see that $w(s), w'(s), \phi'(s), \phi''(s) > 0$ for $s \in [s_2, s_f)$.
  Therefore, $|\phi''|$ is bounded, and hence $s_f = \infty$.

  Next, observe that $\phi'(s) \geq \phi'(s_2) > 0$ for $s \geq s_2$.
  This is our desired contradiction, since $\phi(s) < \phi_{\text{max}}$ cannot be true for all $s \in [s_1, s_f)$.
\end{proof}

Next, we show that there is another useful positive invariant cone for~\eqref{eq:5dODE}.
We consider the convex cone
\begin{equation*}
  \cK := \left\{ (x, y) \in \R^2 \:  x \geq 0 \text{ and } y \geq 3 x \right\}.
\end{equation*}
We are interested in showing that the condition $(\phi, \phi''), (\phi', \phi^{(3)}) \in \cK$ is positive invariant under the flow of~\eqref{eq:5dODE}.
We set $\xi := \phi'' - 3\phi$, and with this $(\phi, \phi''), (\phi', \phi^{(3)}) \in \cK$ is equivalent to $\phi, \phi', \xi, \xi' \geq 0$.

From~\eqref{eq:5dODE} we compute
\begin{equation*}
  \xi''
  =
  \left(6 \left(\phi'\right)^2 + 4 \cos(2 \phi) + 6\right) \xi - 2 \xi' + Q(\phi, \phi'),
\end{equation*}
where
\begin{equation*}
  Q(\phi, v) := 2 v^3 + (18\phi - 4 \sin(2 \phi)) v^2 + 8 \cos^2(\phi) v + 6 (3 \phi - 2 \sin(2 \phi) + 2 \phi \cos(2\phi)).
\end{equation*}
We now write the fourth-order ODE~\eqref{eq:5dODE} as a system of two second-order ODE
\begin{equation}\label{eq:SecondOrderDecomposition}
  \left\{
    \begin{aligned}
      \phi'' &= 3\phi + \xi,
      \\
      \xi'' &= \left(6 \left(\phi'\right)^2 + 4 \cos(2 \phi) + 6\right) \xi - 2 \xi' + Q(\phi, \phi').
    \end{aligned}
  \right.
\end{equation}
Observe that if $\xi \geq 0$ then $\xi'' \geq 2\xi - 2\xi' + Q(\phi, \phi')$.

\begin{lemma}\label{lem:PositiveInvariantCone}
  Let $s_0 < s_1$, $\phi \in C^\infty([s_0, s_1]; \R)$ be a solution to~\eqref{eq:5dODE}, and $\xi = \phi'' - 3\phi$.
  Moreover, suppose that $\phi(s_0)$, $\phi'(s_0)$, $\xi(s_0)$, and $\xi'(s_0)$ are all non-negative.
  Then $\phi(s)$, $\phi'(s)$, $\xi(s)$, $\xi'(s)$ are all non-negative for $s \in [s_0, s_1]$.
  Moreover, if one of $\phi(s_0)$, $\phi'(s_0)$, $\xi(s_0)$, or $\xi'(s_0)$ is positive, then $\phi(s)$, $\phi'(s)$, $\xi(s)$, $\xi'(s)$ are all positive for $s \in (s_0, s_1]$.
\end{lemma}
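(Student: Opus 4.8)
The plan is to run a standard barrier/invariant-region argument on the second-order system~\eqref{eq:SecondOrderDecomposition}, exploiting the sign structure we have already assembled: $\phi'' = 3\phi + \xi$ couples $\phi$ to $\xi$ with a nonnegative coefficient, and the remark just before the lemma gives $\xi'' \geq 2\xi - 2\xi' + Q(\phi, \phi')$ whenever $\xi \geq 0$. The first step is to check that $Q$ has the right sign: I would verify that $Q(\phi, v) \geq 0$ for all $\phi, v \geq 0$, with equality only at $\phi = v = 0$. Reading off the coefficients, the $v^3$, $v^0$ terms are manifestly nonnegative for $\phi, v \geq 0$ (for the constant term, $3\phi - 2\sin(2\phi) + 2\phi\cos(2\phi) \geq 0$ on $[0, \infty)$ since its derivative $3 - 4\cos(2\phi) + 2\cos(2\phi) - 4\phi\sin(2\phi) = 3 - 2\cos(2\phi) - 4\phi\sin(2\phi)$ is $\geq 1 - 4\phi\sin(2\phi)$, which one bounds; or more cleanly, one checks directly that the function vanishes at $0$ and is nonnegative), the linear coefficient $8\cos^2\phi \geq 0$, and the $v^2$ coefficient $18\phi - 4\sin(2\phi) \geq 18\phi - 8\phi > 0$ for $\phi > 0$ and $= 0$ at $\phi = 0$. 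Hence $Q \geq 0$ on the closed first quadrant, strictly except at the origin.

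Next I would set up the invariance via a continuity/supremum argument. Define
\[
  s_\star := \sup\{ \sigma \in [s_0, s_1] \: \phi \geq 0,\ \phi' \geq 0,\ \xi \geq 0,\ \xi' \geq 0 \text{ on } [s_0, \sigma] \},
\]
which is well-defined and in $[s_0, s_1]$ by hypothesis at $s_0$ and continuity. Suppose $s_\star < s_1$. On $[s_0, s_\star]$ all four quantities are nonnegative, so $\phi'' = 3\phi + \xi \geq 0$, giving $\phi'$ nondecreasing, hence $\phi' \geq \phi'(s_0) \geq 0$ and $\phi$ nondecreasing on that interval; and $\xi'' \geq 2\xi - 2\xi' + Q(\phi,\phi') \geq 2\xi - 2\xi'$ there. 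The ODE $\xi'' = 2\xi - 2\xi' + (\text{nonneg})$ with $\xi(s_0), \xi'(s_0) \geq 0$ keeps $\xi, \xi' \geq 0$: one way to see this is to use the integrating factor for the $\xi' $ term, writing $(e^{s}\xi')' = e^{s}(\xi'' + \xi') \geq e^{s}(2\xi - \xi') $, which is not immediately sign-definite, so instead I would argue directly — if $\xi'$ were to reach $0$ from above at some first time $\tau \in (s_0, s_\star]$ with $\xi'' (\tau) \leq 0$, then $2\xi(\tau) - 2\xi'(\tau) + Q \leq 0$ forces $\xi(\tau) \leq 0$, hence $\xi(\tau) = 0$, hence (since $\xi$ was nonnegative and $\xi(\tau)=0$) $\xi'(\tau) = 0$ is a degenerate minimum and a short Gronwall/uniqueness argument pins $\xi \equiv 0$ on a neighbourhood, contradicting $s_\star < s_1$ unless we are in the all-zero case. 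The cleaner packaging: since $\xi \geq 0$ on $[s_0, s_\star]$, the inequality $\xi'' + 2\xi' \geq 2\xi \geq 0$ gives $(e^{2s}\xi')' \geq 0$, so $e^{2s}\xi'(s) \geq e^{2s_0}\xi'(s_0) \geq 0$, whence $\xi' \geq 0$ throughout $[s_0, s_\star]$ and therefore $\xi$ is nondecreasing and $\xi \geq \xi(s_0) \geq 0$. Thus at $s_\star$ all four quantities remain nonnegative, and by the strict inequalities coming from $\phi'' = 3\phi + \xi$ and $(e^{2s}\xi')' \geq 0$ one gets them bounded away from obstruction on a right-neighbourhood, contradicting maximality; hence $s_\star = s_1$.

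For the strict part, suppose one of $\phi(s_0), \phi'(s_0), \xi(s_0), \xi'(s_0)$ is positive. The coupling propagates positivity: if $\xi'(s_0) > 0$ then $e^{2s}\xi'$ is nondecreasing and positive at $s_0$, so $\xi' > 0$ on $(s_0, s_1]$, hence $\xi$ strictly increasing, hence $\xi > 0$ on $(s_0, s_1]$, hence $\phi'' \geq \xi > 0$ so $\phi'$ strictly increasing and then $\phi$ strictly increasing, so both become positive immediately. If instead $\xi(s_0) > 0$, then $\xi > 0$ at least initially and $\xi' $ nondecreasing gives $\xi$ nondecreasing, so $\xi > 0$ on all of $[s_0, s_1]$, and the same cascade applies. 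If $\phi(s_0) > 0$ or $\phi'(s_0) > 0$, then $\phi > 0$ on $(s_0, s_1]$ (respectively immediately), so $Q(\phi, \phi') > 0$ there by the strict positivity of $Q$ off the origin, which forces $\xi'' > 2\xi - 2\xi' $, and integrating $(e^{2s}\xi')' = e^{2s}(\xi'' + 2\xi') > e^{2s}\cdot 2\xi \geq 0$ actually gives $\xi'(s) > \xi'(s_0) \geq 0$ for $s > s_0$ once $Q > 0$ has acted over a positive-length interval, so $\xi' > 0$, then $\xi > 0$, then the cascade. I expect the main obstacle to be exactly this bookkeeping of strictness — making sure the "one positive entry propagates to all entries positive" claim is airtight in the degenerate sub-cases (e.g. $\phi(s_0) = \phi'(s_0) = 0$ but $\xi(s_0) = 0$, $\xi'(s_0) > 0$), which is handled by the integrating-factor monotonicity of $e^{2s}\xi'$ together with the sign of $Q$; everything else is routine ODE comparison.
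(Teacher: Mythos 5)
Your overall strategy is the same as the paper's: first establish $Q(\phi, v) \geq 0$ on the closed first quadrant with equality only at the origin, then run a cone-invariance/comparison argument for the second-order system~\eqref{eq:SecondOrderDecomposition}, disposing of the all-zero case by uniqueness of solutions. Your packaging of the invariance step (the supremum time $s_\star$, the integrating-factor observation $(e^{2s}\xi')' = e^{2s}(\xi'' + 2\xi') \geq 0$ valid while $\xi \geq 0$, and uniqueness in the degenerate corner cases) is a legitimate and completable way to make precise what the paper states as an observation, so that part is fine even if the bookkeeping at $s_\star$ is only sketched.

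The genuine gap is your treatment of the $v^0$-coefficient of $Q$, namely $h(\phi) := 6\left(3\phi - 2\sin(2\phi) + 2\phi\cos(2\phi)\right)$. It is not ``manifestly nonnegative'', and the derivative route you float fails: $h'(\phi)/6 = 3 - 2\cos(2\phi) - 4\phi\sin(2\phi)$ equals $3 - \pi < 0$ at $\phi = \pi/4$, so $h$ is not monotone, the bound ``$\geq 1 - 4\phi\sin(2\phi)$'' is useless on exactly the range where $h$ comes close to zero (its minimum on $(0,\infty)$ is about $2$ after the factor $6$), and ``one checks directly that it is nonnegative'' is just a restatement of the claim. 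This is precisely the step the paper cannot do by inspection either: it verifies $\min_{\phi \in [\pi/8, 3]} h(\phi) > 1.9$ by the interval-arithmetic branch-and-bound of \Cref{lem:CoeffBoundsOnWODE}, with elementary bounds outside that range. Moreover the strict positivity of $h$ for $\phi > 0$ is essential, not cosmetic: in the case $\phi(s_0) > 0$, $\phi'(s_0) = \xi(s_0) = \xi'(s_0) = 0$, it is exactly $\xi''(s_0) = Q(\phi(s_0), 0) = h(\phi(s_0)) > 0$ that launches the positivity cascade, so the strict half of the lemma is unproved without it. If you want to avoid computer assistance, an elementary fix exists: with $t = 2\phi$ the claim is $4\sin t < t\,(3 + 2\cos t)$ for $t > 0$, which follows because $g(t) := 4\sin t/(3 + 2\cos t)$ satisfies $g(0) = 0$ and $g'(t) = (12\cos t + 8)/(3 + 2\cos t)^2 < 1$, since $(3 + 2\cos t)^2 - (12\cos t + 8) = 1 + 4\cos^2 t > 0$; hence $g(t) < t$. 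Some such argument (or the paper's interval computation) must be supplied before your proof is complete.
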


\begin{proof}
  First we show that $Q(\phi, v) \geq 0$ for $\phi, v \geq 0$ with equality only when $(\phi, v) = 0$.
  We can consider $Q(\phi, v)$ as a cubic in $v$ with coefficients being functions of $\phi$.
  Clearly $Q(0, 0) = 0$, and the coefficients of $v^1$, $v^2$, and $v^3$ in $Q(\phi, v)$, are all non-negative for $\phi \geq 0$.

  Next, we show that the coefficient of $v^0$ is positive for $\phi > 0$.
  For $\phi \in [0, \pi/8]$ we have
  \begin{equation*}
    6 (3 \phi - 2 \sin(2 \phi) + 2 \phi \cos(2\phi))
    \geq 6 (\sqrt{2} - 1) \phi.
  \end{equation*}
  We also have the trivial lower bound, for $\phi \geq 0$,
  \begin{equation*}
    6 (3 \phi - 2 \sin(2 \phi) + 2 \phi \cos(2\phi))
    \geq 6 (\phi - 2),
  \end{equation*}
  and hence we have this bounded uniformly above zero for, say, $\phi \geq 3$.
  Next, we use the same branch-and-bound algorithm for interval arithmetic that we used multiple times in the proof of \Cref{lem:CoeffBoundsOnWODE} to verify that
  \begin{equation*}
    \min_{\phi \in [\pi/8, 3]} 6 (3 \phi - 2 \sin(2 \phi) + 2 \phi \cos(2\phi)) > 1.9,
  \end{equation*}
  and hence the coefficient of $v^0$ is positive for $\phi > 0$.

  Observe that the condition $\phi, \phi', \xi, \xi' > 0$ is a positive invariant under the flow of~\eqref{eq:SecondOrderDecomposition}.
  Moreover, if $\phi(s_0), \phi'(s_0), \xi(s_0), \xi'(s_0) \geq 0$ and one of these are positive, then it easy to see from~\eqref{eq:SecondOrderDecomposition} that $\phi(s), \phi'(s), \xi(s), \xi'(s) > 0$ for $s \in (s_0, s_1]$.
  If $\phi, \phi', \xi, \xi' = 0$, then uniqueness of solutions implies that $\phi, \xi \equiv 0$.
  Putting this together yields our claim.
\end{proof}

Next, we use this lemma to prove that once a non-trivial orbit enters the $(\phi, \phi''), (\phi', \phi^{(3)}) \in \cK$ cone then it must blowup in finite $s$-time.

\begin{lemma}\label{lem:ConeBlowup}
  Let $s_0 \in \R$, $\phi \in C^\infty([s_0, s_f); \R)$ solves~\eqref{eq:5dODE}, where $s_f \in (s_0, \infty]$ is the maximal time of existence of $\phi$.
  Set $\xi = \phi'' - 3\phi$, and suppose $\phi(s_0)$, $\phi'(s_0)$, $\xi(s_0)$, $\xi'(s_0)$ are all non-negative with at least one of them positive.
  Then there exists a $s_1 \in [s_0, s_f)$ such that $\phi'(s_1) > 0$, $\phi''(s_1) \geq 2\sqrt{6}$ and $\phi^{(3)}(s_1) > 0$, and hence $s_f < \infty$ by \Cref{lem:BiharmonicODEBlowup}.
\end{lemma}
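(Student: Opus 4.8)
The plan is to combine the positive invariance of the cone from \Cref{lem:PositiveInvariantCone} with the growth of $\phi''$ that this invariance forces, and then feed the outcome into \Cref{lem:BiharmonicODEBlowup}.

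First I would invoke \Cref{lem:PositiveInvariantCone} on $[s_0, s]$ for each $s < s_f$: since $\phi(s_0), \phi'(s_0), \xi(s_0), \xi'(s_0) \geq 0$ with at least one strictly positive, we get $\phi(s), \phi'(s), \xi(s), \xi'(s) > 0$ for all $s \in (s_0, s_f)$. Consequently $\phi'' = 3\phi + \xi > 0$ and $\phi^{(3)} = 3\phi' + \xi' > 0$ on $(s_0, s_f)$, so $\phi$, $\phi'$ and $\phi''$ are all strictly increasing there. In particular $\phi'(s_1) > 0$ and $\phi^{(3)}(s_1) > 0$ hold automatically for every $s_1 \in (s_0, s_f)$, so the entire content of the lemma reduces to producing some $s_1 \in (s_0, s_f)$ with $\phi''(s_1) \geq 2\sqrt{6}$; once that is done, \Cref{lem:BiharmonicODEBlowup} applied with $s_1$ in place of $s_0$ (using $c_\star = 2\sqrt{6}$ when $d = 5$) yields $s_f < \infty$ together with the divergence of all $\phi^{(i)}$.

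To produce such an $s_1$ I would argue by contradiction, assuming $\phi''(s) < 2\sqrt{6}$ for every $s \in [s_0, s_f)$. Then $\phi = (\phi'' - \xi)/3 < 2\sqrt{6}/3$ and $\xi = \phi'' - 3\phi < 2\sqrt{6}$ on $[s_0, s_f)$, so $\phi$ and $\xi$ are bounded; moreover $0 < \phi'(s) \leq \phi'(s_0) + 2\sqrt{6}\,(s - s_0)$. If $s_f = \infty$, fix any $s' \in (s_0, s_f)$; since $\phi'' > 0$ makes $\phi'$ increasing, $\phi'(s) \geq \phi'(s') > 0$ for $s \geq s'$, whence $\phi(s) \to \infty$, contradicting the bound on $\phi$. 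If instead $s_f < \infty$, the displayed bound shows $\phi'$ is bounded on $[s_0, s_f)$; plugging the bounds on $\phi, \phi', \xi$ into the $\xi$-equation of~\eqref{eq:SecondOrderDecomposition} gives $(\xi')' = -2\xi' + h(s)$ with $h$ bounded on $[s_0, s_f)$ (the coefficient of $\xi$ and $Q(\phi,\phi')$ being continuous functions of bounded arguments), so a Gronwall estimate bounds $\xi'$, and hence $\phi^{(3)} = 3\phi' + \xi'$, on $[s_0, s_f)$. Thus $\phi, \phi', \phi'', \phi^{(3)}$ would all be bounded on $[s_0, s_f)$, contradicting the maximality of the finite $s_f$. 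Either way the assumption fails.

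Hence $\phi''$ is not bounded above by $2\sqrt{6}$ on $[s_0, s_f)$. If $\phi''(s_0) \geq 2\sqrt{6}$ I take any $s_1 \in (s_0, s_f)$, so that $\phi''(s_1) > \phi''(s_0) \geq 2\sqrt{6}$ by strict monotonicity; otherwise I let $s_1$ be the first time $\phi''$ attains $2\sqrt{6}$ (well-defined and $> s_0$ by continuity). In both cases $s_1 \in (s_0, s_f)$, so $\phi'(s_1), \phi^{(3)}(s_1) > 0$, and \Cref{lem:BiharmonicODEBlowup} completes the proof. The only real subtlety is the finite-$s_f$ sub-case, where one has to exclude $\phi^{(3)}$ (equivalently $\xi'$) blowing up while the lower-order quantities stay bounded; this is precisely what the linear-in-$\xi'$ structure of~\eqref{eq:SecondOrderDecomposition} and Gronwall's inequality handle.
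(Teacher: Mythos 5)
Your proof is correct, and its skeleton is the same as the paper's: invoke \Cref{lem:PositiveInvariantCone} to get strict positivity of $\phi,\phi',\xi,\xi'$ (hence of $\phi''$ and $\phi^{(3)}$) on $(s_0,s_f)$, assume $\phi''<2\sqrt{6}$ on $[s_0,s_f)$ to reach a contradiction, then take the first time $\phi''$ reaches $2\sqrt{6}$ and feed it to \Cref{lem:BiharmonicODEBlowup}. Where you differ is in how the contradiction is produced. The paper first asserts that boundedness of $|\phi''|$ forces $s_f=\infty$ (leaving the ``no finite-time blow-up while $\phi''$ is bounded'' step implicit) and then uses the cone dynamics, namely the differential inequality $\xi''\geq 2\xi-2\xi'$ from~\eqref{eq:SecondOrderDecomposition}, to conclude that $\xi,\xi'$, and hence $\phi,\phi'$, diverge, contradicting $\phi''=3\phi+\xi<2\sqrt{6}$. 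You avoid the $\xi$-growth argument entirely: in the $s_f=\infty$ case you note that $\phi''>0$ makes $\phi'$ increasing and bounded below by a positive constant, so $\phi\to\infty$, contradicting the bound $3\phi<\phi''<2\sqrt{6}$ that positivity of $\xi$ gives; and in the $s_f<\infty$ case you make the implicit step explicit, bounding $\phi'$ by integration and $\xi'$ (hence $\phi^{(3)}=3\phi'+\xi'$) by a Gronwall estimate on $\xi''=-2\xi'+h(s)$ with $h$ bounded, contradicting maximality of a finite $s_f$. Your route is slightly more elementary in the infinite-time case and more detailed in the finite-time case, at the cost of a case split the paper compresses into one line; both are valid.
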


\begin{proof}
  From \Cref{lem:PositiveInvariantCone}, we know that $\phi(s)$, $\phi'(s)$, $\xi(s)$, $\xi'(s)$ are all positive, and hence $\phi''(s)$ and $\phi^{(3)}(s)$ are also positive, for $s \in (s_0, s_f)$.
  If there exists a $s \in (s_0, s_f)$ such that $\phi''(s) \geq 2\sqrt{6}$ then we are done.

  Hoping for a contradiction, we assume that $\phi''(s) < 2\sqrt{6}$ for all $s \in (s_0, s_f)$, and hence $s_f = \infty$, since $|\phi''|$ is bounded.
  We have $\xi''(s) \geq 2\xi(s) - 2\xi'(s)$ on $[s_0, \infty)$, and hence $\xi(s)$ and $\xi'(s)$ both diverge to infinity as $s \rightarrow \infty$.
  From~\eqref{eq:SecondOrderDecomposition} we see that $\phi(s)$ and $\phi'(s)$ both diverge to infinity as $s \rightarrow \infty$.
  Putting all of this together, we see that there exists a $s_1 > s_0$ such that $\phi''(s_1) \geq 2\sqrt{6}$ which is our desired contradiction.
\end{proof}

Next, we show that if an orbit in $W^u(0)$ exits $\cC$ through the left or right, then it must blowup in finite $s$-time.

\begin{lemma}\label{lem:RightSideExit}
  Let $\phi \in C^\infty((-\infty, s_f); \R)$ be a solution of~\eqref{eq:5dODE}, where $s_f \in (-\infty, \infty]$ is the maximal time of existence of $\phi$.
  Suppose that $\Energy[\phi](s) > 0$ for all $s \in (-\infty, s_f)$.
  Furthermore, suppose that there exists $s_0 < s_1 < s_f$ such that $(\phi(s), \phi''(s)) \in \cC$ for $s \in [s_0, s_1)$, and $\phi(s_1) = \pi$ (or, $\phi(s_1) = 0$).
  Then there exists a $s_\star \in [s_1, s_f)$ such that $\phi''(s_\star) = 2\sqrt{6}$ (resp., $\phi(s_\star) = -2\sqrt{6}$), and hence $s_f < \infty$ by \Cref{thm:BiharmBlowup}.
\end{lemma}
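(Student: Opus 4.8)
The plan is to reduce to the right-hand exit $\phi(s_1)=\pi$ and then push the translated orbit into the invariant cone $\cK$, invoking \Cref{lem:ConeBlowup}. Since $\cC$ is invariant under $(x,y)\mapsto(\pi-x,-y)$ and both \eqref{eq:5dODE} and $\Energy$ are invariant under $\phi\mapsto\pi-\phi$, replacing $\phi$ by $\pi-\phi$ converts an exit through $\phi=0$ into one through $\phi=\pi$ and converts the desired conclusion $\phi''(s_\star)=-2\sqrt6$ into $\phi''(s_\star)=2\sqrt6$; so I would assume $\phi(s_1)=\pi$ from now on.

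Next I would read off the state at $s_1$. From $(\phi(s),\phi''(s))\in\cC$ for $s\in[s_0,s_1)$ we have $\phi(s)<\pi$ there while $\phi(s_1)=\pi$, so $\phi'(s_1)\ge0$; letting $s\nearrow s_1$ in $-\cF(\pi-\phi(s))<\phi''(s)<\cF(\phi(s))$ and using continuity of $\cF$ together with $\cF(0)=0$ and $\cF(\pi)=2\sqrt6$ gives $0\le\phi''(s_1)\le2\sqrt6$. If $\phi''(s_1)=2\sqrt6$ I take $s_\star=s_1$ and finish via \Cref{thm:BiharmBlowup}, so I would assume $\phi''(s_1)<2\sqrt6$.

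The crux is to show that positivity of the energy forces the translated orbit $\eta:=\phi-\pi$, which again solves \eqref{eq:5dODE}, into $\cK$ at $s_1$. Since $F(\pi)=0$ we have $U[\phi](s_1)=-\tfrac{1}{2}(\phi''(s_1))^2\le0$, so $\Energy[\phi](s_1)>0$ gives $T[\phi](s_1)>\tfrac{1}{2}(\phi''(s_1))^2\ge0$; as $\phi'$ is a factor of $T[\phi]$ (see \eqref{eq:EnergySplit}), this already yields $\phi'(s_1)>0$. Writing out $T[\phi](s_1)$ with $q(\pi)=13$, dividing by $\phi'(s_1)$, and splitting $\tfrac{13}{2}\phi'=\tfrac{7}{2}\phi'+3\phi'$, the inequality $T[\phi](s_1)>\tfrac{1}{2}(\phi''(s_1))^2$ becomes
\begin{equation*}
  \phi^{(3)}(s_1)
  >
  \left(\frac{(\phi''(s_1))^2}{2\phi'(s_1)}+\frac{7}{2}\phi'(s_1)-2\phi''(s_1)\right)
  +3\phi'(s_1)+\frac{3}{2}(\phi'(s_1))^3 .
\end{equation*}
By AM--GM, $\tfrac{(\phi''(s_1))^2}{2\phi'(s_1)}+\tfrac{7}{2}\phi'(s_1)\ge\sqrt7\,\phi''(s_1)$ (valid because $\phi'(s_1)>0$ and $\phi''(s_1)\ge0$), and since $\sqrt7>2$ the parenthesised term is non-negative, whence $\phi^{(3)}(s_1)>3\phi'(s_1)$. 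Therefore, with $\zeta:=\eta''-3\eta$, the numbers $\eta(s_1)=0$, $\eta'(s_1)=\phi'(s_1)>0$, $\zeta(s_1)=\phi''(s_1)\ge0$ and $\zeta'(s_1)=\phi^{(3)}(s_1)-3\phi'(s_1)>0$ are all non-negative with at least one positive.

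To conclude, I would apply \Cref{lem:ConeBlowup} to $\eta$ with initial time $s_1$, obtaining $s_2\in[s_1,s_f)$ with $\phi''(s_2)=\eta''(s_2)\ge2\sqrt6$ (and $s_f<\infty$). As $\phi''(s_1)<2\sqrt6\le\phi''(s_2)$, the intermediate value theorem produces $s_\star\in(s_1,s_2]$ with $\phi''(s_\star)=2\sqrt6$, and \Cref{thm:BiharmBlowup} gives $s_f<\infty$. The only delicate step I anticipate is the energy computation verifying $\phi^{(3)}(s_1)>3\phi'(s_1)$ — i.e.\ that the exit state really lies in $\cK$; the rest is bookkeeping together with the already-proved cone invariance and blow-up results.
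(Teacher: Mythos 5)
Your proposal is correct and follows essentially the same route as the paper: reduce by the $\phi\mapsto\pi-\phi$ symmetry to the exit $\phi(s_1)=\pi$, use $U[\phi](s_1)=-\tfrac12(\phi''(s_1))^2$ and $\Energy>0$ to get $T[\phi](s_1)>\tfrac12(\phi''(s_1))^2$, deduce $\phi'(s_1)>0$ and $\phi^{(3)}(s_1)>3\phi'(s_1)$, and conclude via \Cref{lem:ConeBlowup}. The only cosmetic difference is that you verify the key inequality $y^2-4yv+7v^2+3v^4\geq 0$ by AM--GM where the paper writes it as a sum of squares, and you make the intermediate-value step to hit $\phi''=2\sqrt6$ exactly explicit.
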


\begin{proof}
  Due the symmetry of our situation under the transformation $\phi \mapsto \pi - \phi$, we assume without loss of generality that we are in the situation where $\phi(s_1) = \pi$.

  We set $\nu = \phi - \pi$ and $\xi = \nu'' - 3 \nu$.
  Observe that $\nu$ also solves~\eqref{eq:5dODE}, and $(\nu, \xi)$ satisfies~\eqref{eq:SecondOrderDecomposition}.
  Since $(\phi(s_1), \phi''(s_1)) \in \p\cC$ and $\phi(s_1) = \pi$, we have $\phi''(s_1) \geq 0$, and hence $\nu''(s_1) \geq 0$.
  Now observe that $\nu(s_1) = 0$ and $\xi(s_1) \geq 0$.
  Next, we show that $\nu'(s_1) = \phi'(s_1)  > 0$ and $\xi'(s_1) > 0$.
  After this the conclusion follows from \Cref{lem:ConeBlowup}.

  We know that $\Energy[\phi](s_1) > 0$ and $U[\phi](s_1) = - \frac{1}{2} (\phi''(s_1))^2$.
  Therefore, $T[\phi](s_1) > \frac{1}{2} (\phi''(s_1))^2 \geq 0$.
  We know that $\phi'(s_1) > 0$, since $s_1$ is the exit time from $\cC$ and $T[\phi](s_1) > 0$.

  Now we turn our attention to showing that $\xi'(s_1) = \phi^{(3)}(s_1) - 3\phi'(s_1) > 0$.
  After making the substitutions $\phi'(s_1) \mapsto v$ and $\phi''(s_1) \mapsto y$, the inequalities $\phi'(s_1) > 0$ and $T[\phi](s_1) > 0$ give the following lower bound on $\phi^{(3)}(s_1)$,
  \begin{equation*}
    \phi^{(3)}(s_1) > \frac{y^2}{2v} - 2y + \frac{13}{2}v + \frac{3}{2} v^3.
  \end{equation*}
  Therefore, the task of showing that $\xi'(s_1) > 0$ reduces to showing
  \begin{equation*}
    y^2 - 4 y v + 7 v^2 + 3 v^4 \geq 0 \text{ for all } v > 0 \text{ and } y \geq 0.
  \end{equation*}
  This is easy to show, since we can write left hand side as a sum of squares
  \begin{equation*}
    y^2 - 4 y v + 7 v^2 + 3 v^4 = 3 v^4 + 7 \left(v - \frac{2}{7} y\right)^2 + \frac{3}{7} y^2.\qedhere
  \end{equation*}
\end{proof}

\begin{remark}
  Note that solutions of~\eqref{eq:5dODE} in $W^u(0)$ cannot exit $\cC$ with $(\phi, \phi'') = 0$ or $(\phi, \phi'') = (\pi, 0)$.
  Indeed, \Cref{lem:LeftSideExit} and \Cref{lem:RightSideExit} apply to both of these points, with one of these lemmas giving a later $s$-time at which $\phi'' = 2\sqrt{6}$, and the other a later $s$-time at which $\phi'' = -2\sqrt{6}$.
  \Cref{thm:BiharmBlowup} implies that only one of these is possible, and this yields a contradiction.

  We can show this previous statement more directly by noting that upon exiting through one of these points $T[\phi] > 0$.
  The restriction this puts on $(\phi', \phi^{(3)})$ is not compatible with this being a $s$-time at which the orbit exits $\cC$.
  \hfill$\blacklozenge$
\end{remark}

\begin{remark}
  One of the barriers to extending all of our results to $d \in \{6, 7\}$ is that \Cref{lem:RightSideExit} is not true when $d \in \{6, 7\}$.
  Indeed, for $d \in \{6, 7\}$, $\phi' > 0$ and $T[\phi] > 0$ does not even imply that $\phi^{(3)} \geq 0$.
  This makes it impossible to find a suitable positive invariant cone analogous to $\cK$.
  \hfill$\blacklozenge$
\end{remark}

Combining \Cref{thm:BiharmBlowup}, \Cref{lem:LeftSideExit}, and \Cref{lem:RightSideExit}, we have the following corollary.

\begin{corollary}\label{cor:ExitMiddleBlowup}
  Let $\phi \in C^\infty((-\infty, s_f); \R)$ be an orbit in $W^u(0)$ of~\eqref{eq:5dODE}, where $s_f \in (-\infty, \infty]$ is the maximal time of existence of $\phi$.
  Suppose that there exists $s_0 < s_1 < s_f$ such that $(\phi(s), \phi''(s)) \in \cC$ for all $s \in [s_0, s_1)$, and $(\phi(s_1), \phi''(s_1)) \in \p \cC$.
  Then there exists a $s_\star \in [s_1, s_f)$ such that $\phi''(s_\star) = 2\sqrt{6}$ (or, $\phi''(s_\star) = -2\sqrt{6}$), and hence $s_f < \infty$ and $\phi^{(i)}(s) \rightarrow \infty$ (resp., $\phi^{(i)}(s) \rightarrow -\infty$) as $s \nearrow s_f$ for $i \in \{0, 1, 2, 3\}$.
\end{corollary}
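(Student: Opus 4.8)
The plan is to reduce the statement to a finite case analysis over the pieces of $\p\cC$: on each piece I will produce, via \Cref{lem:LeftSideExit}, \Cref{lem:RightSideExit}, or a direct observation, a time $s_\star \in [s_1, s_f)$ at which $\phi''(s_\star) = \pm 2\sqrt{6} = \pm c_\star$, and then invoke \Cref{thm:BiharmBlowup} for the blowup conclusion. Before that I must verify the standing hypothesis of the two exit lemmas, namely $\Energy[\phi](s) > 0$ for all $s \in (-\infty, s_f)$. Since $(\phi(s), \phi''(s)) \in \cC$ for $s \in [s_0, s_1)$ with $s_0 < s_1$, the orbit $\phi$ is non-constant; as $\phi \in W^u(0)$ we have $\Energy[\phi](s) \to 0$ as $s \to -\infty$, and by \eqref{eq:EnergyRateOfChange} with $d = 5$ the derivative $\p_s\Energy[\phi] = 2(\phi'')^2 + (4\cos(2\phi) + 10)(\phi')^2 + 2(\phi')^4$ is non-negative and vanishes only where $\phi' = \phi'' = 0$. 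Hence $\Energy[\phi] \geq 0$ throughout, and if it vanished at some $s$ it would vanish on all of $(-\infty, s]$, forcing $\phi' \equiv \phi'' \equiv 0$ there and, by analyticity of solutions of \eqref{eq:5dODE}, $\phi$ constant, a contradiction. So $\Energy[\phi] > 0$ on $(-\infty, s_f)$.

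Next I will spell out $\p\cC$ using $\cF(\phi_0) = U_0(\phi_0) = 2\sqrt{6}\sin\phi_0$ for $\phi_0 \in [0, \pi/2]$ and $\cF \equiv 2\sqrt{6}$ on $(\pi/2, \pi]$. The boundary is the union of: the top graph $y = \cF(x)$, which is the arc $y = U_0(x)$ over $x \in [0, \pi/2]$ together with the horizontal segment $y = 2\sqrt{6}$ over $x \in [\pi/2, \pi]$; the bottom graph $y = -\cF(\pi - x)$, which is the horizontal segment $y = -2\sqrt{6}$ over $x \in [0, \pi/2]$ together with the arc $y = -U_0(\pi - x)$ over $x \in [\pi/2, \pi]$; the left segment $\{0\} \times [-2\sqrt{6}, 0]$; and the right segment $\{\pi\} \times [0, 2\sqrt{6}]$. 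Writing $(\phi_1, y_1) := (\phi(s_1), \phi''(s_1)) \in \p\cC$, I argue: (a) if $y_1 = 2\sqrt{6}$ or $y_1 = -2\sqrt{6}$, which covers the two horizontal segments and the corners $(\pi/2, \pm 2\sqrt{6})$, $(\pi, 2\sqrt{6})$, $(0, -2\sqrt{6})$, take $s_\star = s_1$; (b) if $(\phi_1, y_1)$ lies on the top arc, then $\phi_1 \in [0, \pi/2]$ and $y_1 = U_0(\phi_1) \geq 0$, so \Cref{lem:LeftSideExit}(i) yields $s_\star$ with $\phi''(s_\star) = 2\sqrt{6}$; (c) if $(\phi_1, y_1)$ lies on the bottom arc, then $\phi_1 \in [\pi/2, \pi]$ and $y_1 \leq 0$, so \Cref{lem:LeftSideExit}(ii) yields $\phi''(s_\star) = -2\sqrt{6}$ --- note (b) and (c) also cover the corners $(0,0)$ and $(\pi, 0)$; (d) if $\phi_1 = 0$ with $y_1 \in (-2\sqrt{6}, 0)$, then \Cref{lem:RightSideExit} in its $\phi(s_1) = 0$ form yields $\phi''(s_\star) = -2\sqrt{6}$, and if $\phi_1 = \pi$ with $y_1 \in (0, 2\sqrt{6})$, then \Cref{lem:RightSideExit} in its $\phi(s_1) = \pi$ form yields $\phi''(s_\star) = 2\sqrt{6}$. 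A quick check confirms that (a)--(d) exhaust $\p\cC$.

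In every case we obtain $s_\star \in [s_1, s_f)$ with $\phi''(s_\star) = c_\star$ or $\phi''(s_\star) = -c_\star$, and \Cref{thm:BiharmBlowup} then delivers $s_f < \infty$ together with $\phi^{(i)}(s) \to \infty$ (respectively $\phi^{(i)}(s) \to -\infty$) as $s \nearrow s_f$ for $i \in \{0, 1, 2, 3\}$, with the sign matching that of $\phi''(s_\star)$. I do not anticipate any genuinely hard step: all the substance is already packed into \Cref{lem:LeftSideExit}, \Cref{lem:RightSideExit}, and \Cref{thm:BiharmBlowup}. The only point demanding care is the bookkeeping in the preceding paragraph --- confirming that the hypotheses of the two exit lemmas together cover \emph{every} point of $\p\cC$, including all six corners and the junctions between the curved arcs and the horizontal segments --- which is precisely why I handle the levels $y = \pm 2\sqrt{6}$ separately and directly rather than through a lemma.
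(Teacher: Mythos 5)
Your proposal is correct and follows essentially the same route as the paper, which derives the corollary directly by "combining \Cref{thm:BiharmBlowup}, \Cref{lem:LeftSideExit}, and \Cref{lem:RightSideExit}"; your contribution is simply to make explicit the boundary case analysis and the verification that $\Energy[\phi] > 0$ on $(-\infty, s_f)$ (where non-constancy should be attributed to $\phi \in W^u(0)$ together with $\phi(s_0) \in (0,\pi)$, since the constant $\pi/2$ also lies over $\cC$), both of which the paper leaves implicit.
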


\section{Existence of heteroclinic orbits}\label{sect:HeteroOrbitsExist}
Again, in this section we focus on the $d = 5$ case, except for some preparatory lemmas in which we can relax this condition on $d$.
In the last section we considered what happened to orbits in $W^u(0)$ which exited $\cC$.
In this section we prove that non-trivial orbits in $W^u(0)$ either stay in $\cC \cup -\cC$ forever, or blowup in finite $s$-time.
Furthermore, we show that the orbits which stay in $\cC$ forever are heteroclinic orbits connecting the origin and $(\pi/2, 0, 0, 0)$, and we prove the existence of such an orbit.
By combining these results we end with a proof of \Cref{thm:EntireSolutionsAreHeteroclinic}.
We begin with some preparatory lemmas.

\begin{lemma}\label{lem:BoundedQBoundedQDot}
  Let $d \in \N$, $R > 0$, and $\phi \in C^\infty((-\infty, s_1]; \R)$ be a solution of~\eqref{eq:FullODE}, where $s_1 \in \R$.
  Suppose that $|(\phi(s), \phi''(s))| \leq R$ for all $s \in (-\infty, s_1]$.
  Then there exists $C > 0$, depending only on $d$ and $R$ such that,
  \begin{equation*}
    |(\phi'(s), \phi^{(3)}(s))| \leq C \text{ for } s \leq s_1.
  \end{equation*}
\end{lemma}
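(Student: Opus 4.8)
The plan is to exploit the autonomy of~\eqref{eq:FullODE} together with the monotone energy $\Energy$ from~\eqref{eq:EnergyDefn} to obtain $L^\infty_{\mathrm{loc}}$ bounds on the lower derivatives. The key structural observation is that~\eqref{eq:FullODE} expresses $\phi^{(4)}$ as a polynomial in $(\phi,\phi',\phi'',\phi^{(3)})$ with coefficients that are bounded functions of $\phi$ (through $\cos 2\phi$, $\sin 2\phi$) — so once we know $\phi$ and $\phi''$ are bounded, the only genuinely unknown quantities are $\phi'$ and $\phi^{(3)}$, and we must bootstrap bounds on these from the ODE and the energy.

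First I would use the energy. Since $d \geq 4$ is \emph{not} assumed here (the lemma allows general $d \in \N$), I would instead argue more robustly: define $\h\Energy(s) := \Energy[\phi](s)$ and recall from~\eqref{eq:EnergyRateOfChange} that $\p_s\Energy[\phi]$ is a polynomial in $\phi',\phi''$ with $\phi$-bounded coefficients; but this alone does not immediately bound $\Energy$. The cleaner route is the following two-step bootstrap. \textbf{Step 1: bound $\phi'$ on unit-length subintervals via an interpolation/Landau–Kolmogorov argument.} Fix $s \leq s_1$ and apply the standard interpolation inequality on $[s-1, s+1] \subset (-\infty, s_1]$ (shrinking if $s$ is within distance $1$ of $s_1$): $\|\phi'\|_{L^\infty[s-\frac12,s+\frac12]} \leq C(\|\phi\|_{L^\infty[s-1,s+1]} + \|\phi''\|_{L^\infty[s-1,s+1]}) \leq C(d,R)$. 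The same inequality bounds $\phi^{(3)}$ in terms of $\|\phi''\|_{L^\infty}$ and $\|\phi^{(4)}\|_{L^\infty}$, but $\phi^{(4)}$ is not yet controlled, so this is not yet closed. \textbf{Step 2: close the loop using the ODE.} Having bounded $\phi$, $\phi'$, and $\phi''$ uniformly, rewrite~\eqref{eq:FullODE} in the form $\phi^{(4)} + 2(d-4)\phi^{(3)} = \Phi(s)$, where $\Phi$ is a bounded function (its right side involves only $\phi,\phi',\phi''$, all now controlled, with $\phi$-bounded coefficients). This is a first-order linear ODE for $\phi^{(3)}$ with bounded inhomogeneity; integrating backward from $s_1$, and using that $\phi^{(3)}$ itself is obtained by differentiating the bounded $\phi''$ so cannot grow, one deduces $|\phi^{(3)}(s)| \leq C(d,R)$ for all $s \leq s_1$. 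Concretely: if $d \neq 4$, the homogeneous solution is $e^{-2(d-4)s}$; on the half-line $s \to -\infty$ one uses a Gronwall/variation-of-constants estimate on $[s, s_1]$ combined with the interpolation bound $\|\phi^{(3)}\|_{L^\infty[s-\frac12,s+\frac12]} \leq C(\|\phi''\|_{L^\infty[s-1,s+1]} + \|\phi^{(4)}\|_{L^\infty[s-1,s+1]})$ to avoid circularity — one shows that any local bound on $\phi^{(3)}$ propagates without growth. If $d = 4$ the equation reads $\phi^{(4)} = \Phi$ directly and interpolation finishes it immediately.

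A slicker way to organize Step 2, avoiding the backward-Gronwall bookkeeping, is: let $M(s) := \sup_{\sigma \leq s}|\phi^{(3)}(\sigma)|$ (finite for each fixed $s \leq s_1$ by smoothness, though possibly growing in $s$ a priori). Plugging the bounds on $\phi,\phi',\phi''$ into~\eqref{eq:FullODE} gives $|\phi^{(4)}(\sigma)| \leq C(d,R) + 2|d-4|\,|\phi^{(3)}(\sigma)|$ for $\sigma \leq s$. Then the interpolation inequality $|\phi^{(3)}(\sigma)| \leq \eta\|\phi^{(4)}\|_{L^\infty[\sigma-1,\sigma+1]} + C\eta^{-3}\|\phi''\|_{L^\infty[\sigma-1,\sigma+1]}$ with $\eta$ small enough that $\eta \cdot 2|d-4| \leq \tfrac12$ yields $M(s) \leq \tfrac12 M(s) + C(d,R)$, hence $M(s) \leq C(d,R)$ uniformly in $s \leq s_1$. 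This handles all $d$ at once.

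\textbf{Main obstacle.} The one point needing care is the circularity between $\phi^{(3)}$ and $\phi^{(4)}$: the interpolation inequality bounds $\phi^{(3)}$ in terms of $\phi''$ \emph{and} $\phi^{(4)}$, while the ODE bounds $\phi^{(4)}$ in terms of $\phi^{(3)}$. The absorption trick above (choosing the interpolation parameter $\eta$ small relative to the coefficient $2|d-4|$ of $\phi^{(3)}$ in the ODE) is exactly what breaks this loop, and it is clean \emph{because} the dependence of $\phi^{(4)}$ on $\phi^{(3)}$ in~\eqref{eq:FullODE} is linear with a constant coefficient. I expect the only slightly delicate bookkeeping to be making sure all intervals $[\sigma-1,\sigma+1]$ stay inside $(-\infty, s_1]$, which forces us to work with $\sigma \leq s_1 - 1$ first and then handle the last unit interval $[s_1-1, s_1]$ separately using one-sided interpolation on $[s_1-2, s_1]$; this is routine.
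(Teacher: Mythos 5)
Your overall strategy is essentially the paper's: first deduce a bound on $\phi'$ from the bounds on $\phi$ and $\phi''$ (the paper makes the same point, phrased as: if $|\phi'|$ were large then, $\phi''$ being bounded, $\phi$ could not stay in $[-R,R]$), and then exploit the fact that in~\eqref{eq:FullODE} the fourth derivative depends on $\phi^{(3)}$ only through the linear term $-2(d-4)\phi^{(3)}$, together with the bound on $\phi''$, to control $\phi^{(3)}$. The paper implements the second step by writing $|\p_s\phi^{(3)} + 2(d-4)\phi^{(3)}| \leq C(d,R)$ and invoking $|\phi''|\leq R$; your interpolation-plus-absorption is an equally valid implementation, and it has the small advantage of treating every $d\in\N$ (including $d=4$, where the linear term vanishes) uniformly, as the statement requires. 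The energy $\Energy$ plays no role in either argument, as you correctly ended up not using it.

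One step as written does not stand. In the ``slicker'' version you set $M(s):=\sup_{\sigma\leq s}|\phi^{(3)}(\sigma)|$ and call it finite ``by smoothness,'' but the supremum is over the non-compact half-line $(-\infty,s]$, where smoothness gives no finiteness; finiteness of exactly this quantity is what the lemma asserts, so the absorption $M\leq\tfrac12 M+C$ is circular as stated. (The earlier variation-of-constants sketch has the same soft spot: ``$\phi^{(3)}$ is obtained by differentiating the bounded $\phi''$ so cannot grow'' is not an argument.) The repair is immediate and uses only ingredients you already wrote down: run the absorption on compact windows. For each $\sigma\leq s_1$ take $I_\sigma:=[\sigma-2,\sigma]\subset(-\infty,s_1]$; then $\|\phi^{(3)}\|_{L^\infty(I_\sigma)}<\infty$ by compactness, the interpolation inequality on $I_\sigma$ gives $\|\phi^{(3)}\|_{L^\infty(I_\sigma)}\leq \eta\,\|\phi^{(4)}\|_{L^\infty(I_\sigma)}+C\eta^{-1}\|\phi''\|_{L^\infty(I_\sigma)}$, and the ODE bound $|\phi^{(4)}|\leq C(d,R)+2|d-4|\,|\phi^{(3)}|$ on $I_\sigma$ (valid once $\phi$, $\phi'$, $\phi''$ are bounded) lets you absorb after choosing $2|d-4|\,\eta\leq\tfrac12$, yielding $\|\phi^{(3)}\|_{L^\infty(I_\sigma)}\leq C(d,R)$ uniformly in $\sigma$; taking the union over $\sigma\leq s_1$ finishes the proof. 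Using left-sided windows also removes the boundary bookkeeping near $s_1$ that you flagged, and it avoids the further slip that in your definition of $M(s)$ the windows $[\sigma-1,\sigma+1]$ reach to the right of $s$.
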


\begin{proof}
  First we focus on showing the boundedness of $\phi'$.
  Since $|\phi''(s)| \leq R$ for all $s \leq s_1$, we must have $|\phi'(s)| \leq C(R)$, or else $|\phi(s)| \leq R$ will not be true for all $s \leq s_1$.

  Next, we use this with~\eqref{eq:FullODE} to obtain
  \begin{equation*}
    |\p_s \phi^{(3)} + 2 (d-4) \phi^{(3)}| \leq C(d, R).
  \end{equation*}
  Since $|\phi''(s)| \leq R$ for all $s \leq s_1$, we have $|\phi^{(3)}(s)| \leq C(d, R)$ for all $s \leq s_1$.
\end{proof}

Next, we have another preparatory lemma.

\begin{lemma}\label{lem:LimitWhenBoundedForAllTime}
  Let $d \in \N_{\geq 5}$ and $\phi \in C^\infty(\R; \R)$ be a non-trivial solution to~\eqref{eq:FullODE} in $W^u(0)$.
  Suppose
  \begin{equation*}
    \mathcal{A} := \left\{ (\phi(s), \phi''(s)) \: s \in \R \right\} \subset \R^2
  \end{equation*}
  is bounded.
  Then $(\phi(s), \phi'(s), \phi''(s), \phi^{(3)}(s)) \rightarrow (\phi_\infty, 0, 0, 0)$, where $\phi_\infty = (k + \frac{1}{2}) \pi$ for some $k \in \Z$.
\end{lemma}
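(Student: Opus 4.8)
The plan is to exploit the monotone energy $\Energy[\phi]$ together with the boundedness of $\mathcal{A}$ to force convergence, then identify the limit using the structure of the critical points. Since $\phi \in W^u(0)$ we have $\Energy[\phi](s) \to 0$ as $s \to -\infty$, and by~\eqref{eq:EnergyRateOfChange} with $d \geq 5$ the energy is non-decreasing; moreover \Cref{lem:BoundedQBoundedQDot} applied on each interval $(-\infty, s_1]$ shows that $\phi'$ and $\phi^{(3)}$ are bounded on all of $\R$ whenever $(\phi, \phi'')$ is bounded. Hence the full orbit $\Gamma := \{(\phi(s), \phi'(s), \phi''(s), \phi^{(3)}(s)) : s \in \R\}$ is precompact, and $\Energy$ is bounded and monotone along it, so $\Energy[\phi](s) \to \ell$ for some $\ell \in [0, \infty)$ as $s \to \infty$.

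Next I would analyse the $\omega$-limit set $\omega(\phi)$, which is non-empty, compact, connected, and flow-invariant by the precompactness of $\Gamma$. The crucial point is that $\Energy$ is constant (equal to $\ell$) on $\omega(\phi)$ — this is the standard LaSalle-type argument: any point of $\omega(\phi)$ is a limit $\lim (\phi(s_n+ \cdot))$ along $s_n \to \infty$, and the limiting orbit has constant energy by continuity and monotonicity. But from~\eqref{eq:EnergyRateOfChange}, $\p_s \Energy[\phi] = 0$ along an orbit forces $(\phi'')^2 + g(\phi)(\phi')^2 + (\phi')^4 \equiv 0$; since $g(\phi) = \tfrac12((d-1)\cos 2\phi + 3d-5) > 0$ for $d \geq 5$ (indeed $\geq \tfrac12(4d - 6) > 0$), this forces $\phi' \equiv 0$ and $\phi'' \equiv 0$ on every orbit in $\omega(\phi)$. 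Thus $\omega(\phi)$ consists entirely of equilibria of~\eqref{eq:FullODE}, which by the computation in \Cref{sect:Prelims} are the points $(k\pi/2, 0, 0, 0)$, $k \in \Z$. Since the set of equilibria is discrete and $\omega(\phi)$ is connected, $\omega(\phi)$ is a single point $(\phi_\infty, 0, 0, 0)$ with $\phi_\infty = k_0 \pi /2$, and therefore $(\phi, \phi', \phi'', \phi^{(3)})(s) \to (\phi_\infty, 0, 0, 0)$ as $s \to \infty$.

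It remains to rule out $\phi_\infty$ being an integer multiple of $\pi$, i.e.\ to show $k_0$ is odd. Suppose $\phi_\infty = m\pi$ for some $m \in \Z$. Then $\ell = \lim \Energy[\phi](s) = \Energy[\phi](\phi_\infty,0,0,0) = F(m\pi) - 0 = 0$ using~\eqref{eq:PotentialBiharmonicODE}, so $\Energy[\phi] \equiv 0$ (being non-decreasing, bounded below by its $-\infty$ limit $0$ and above by $0$); but then~\eqref{eq:EnergyRateOfChange} forces $\phi' \equiv 0$ and hence $\phi \equiv 0$, contradicting non-triviality. (At an even critical point the energy level is $0$, at an odd one it is $F((k+\tfrac12)\pi) = \tfrac32(d-3)(d-1) > 0$; so a non-trivial orbit, which must have strictly increased its energy above $0$, cannot limit onto an even critical point.) Hence $\phi_\infty = (k + \tfrac12)\pi$ for some $k \in \Z$, as claimed.

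\textbf{Main obstacle.} The delicate point is the LaSalle step: concluding that $\Energy$ is genuinely constant on the $\omega$-limit set and hence that $\omega(\phi)$ lies in the equilibrium set. This requires precompactness of the \emph{full} four-dimensional orbit, which is exactly what \Cref{lem:BoundedQBoundedQDot} supplies, and requires care that the linearised/nonlinear flow is well-defined long enough for the translates $\phi(s_n + \cdot)$ to subconverge in $C^3_{\mathrm{loc}}$ — this follows from standard ODE continuous dependence together with the uniform bounds. Once precompactness and constancy of the energy are in hand, the positivity $g > 0$ (valid precisely because $d \geq 5$) does the rest cleanly.
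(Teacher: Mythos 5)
Your proposal is correct, and it reaches the conclusion by a somewhat different mechanism than the paper for the convergence step. Both arguments share the same backbone: \Cref{lem:BoundedQBoundedQDot} upgrades the boundedness of $(\phi,\phi'')$ to boundedness of the full orbit in $\R^4$, the energy $\Energy[\phi]$ is monotone non-decreasing with $\Energy[\phi](s)\rightarrow 0$ as $s\rightarrow-\infty$, and the positivity of the coefficient of $(\phi')^2$ in~\eqref{eq:EnergyRateOfChange} for $d\geq 5$ is what forces $\phi'$ and $\phi''$ to vanish in the limit; the final exclusion of the limits $k\pi$ via the energy level (zero at even critical points, $\tfrac32(d-3)(d-1)>0$ at odd ones, while a non-trivial orbit has strictly positive energy) is essentially identical to the paper's last line. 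Where you diverge is in how convergence is extracted: you invoke the LaSalle invariance principle — precompactness of the orbit, constancy of $\Energy$ on the $\omega$-limit set, invariance forcing $\omega(\phi)$ into the (discrete) equilibrium set, and connectedness pinning down a single limit point. The paper instead argues directly: it bounds $\p_s^2\Energy[\phi]$ to conclude $\p_s\Energy[\phi]\rightarrow 0$ (a Barbalat-type step), deduces $\phi',\phi''\rightarrow 0$, bootstraps bounds on $\phi^{(4)}$ and $\phi^{(5)}$ to get $\phi^{(3)},\phi^{(4)}\rightarrow 0$, and then reads off $\sin(2\phi(s))\rightarrow 0$ from~\eqref{eq:FullODE} to identify the limit. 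Your route buys a cleaner conceptual statement at the cost of citing the invariance-principle machinery (which does require the orbit closure and flow-invariance bookkeeping you flag); the paper's route is more hands-on but self-contained, needing only elementary derivative estimates.

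One small slip: your parenthetical lower bound $g(\phi)\geq\tfrac12(4d-6)$ is wrong — $(d-1)\cos(2\phi)+3d-5$ has minimum $2d-4$, so $g(\phi)\geq d-2$; the quantity $\tfrac12(4d-6)$ is the maximum. Since all you need is $g>0$, which holds for $d\geq 3$, this does not affect the argument.
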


\begin{proof}
  From~\eqref{eq:EnergyRateOfChange}, we estimate
  \begin{equation*}
    \p_s \Energy[\phi] \geq 2 (d - 4) \left( (\phi'')^2 + (d - 2) (\phi')^2 + (\phi')^4 \right).
  \end{equation*}
  Since $\phi$ is non-trivial, we have $\Energy[\phi](s) > 0$ for all $s \in \R$.

  Next, we show that $\Energy[\phi]$ must be bounded.
  We know that $U[\phi]$ is bounded from above.
  Therefore, if $\Energy[\phi]$ was not bounded then for any $T_0 > 0$ we would be able to find a $s_0 \in \R$ such that $T[\phi](s) \geq T_0$ for all $s \geq s_0$.
  By taking $T_0$ sufficiently large we can ensure that $|(\phi', \phi^{(3)})|$ is as large as we like.
  \Cref{lem:BoundedQBoundedQDot} shows that this is not possible, and hence $\Energy[\phi]$ is monotone and bounded.

  \Cref{lem:BoundedQBoundedQDot} implies that $\phi'$ and $\phi^{(3)}$ are bounded on $\R$.
  We compute
  \begin{align*}
    \p_s^2 \Energy[\phi]
    &
      = 2 (d-4) ( 2 \phi'' \phi^{(3)} - (d-1) \sin(2 \phi) (\phi')^3 + ((d-1) \cos(2 \phi) + 3 d - 5) \phi' \phi''
    \\
    &\quad
      + 4 (\phi')^3 \phi'' ),
  \end{align*}
  and hence $|\p_s^2 \Energy[\phi](s)| \leq C$ for all $s \in \R$.
  Therefore, $\p_s \Energy[\phi](s) \rightarrow 0$, and hence $\phi'(s) \rightarrow 0$ and $\phi''(s) \rightarrow 0$, as $s \rightarrow \infty$.
  From~\eqref{eq:FullODE}, we see that $\phi^{(4)}$ is bounded on $\R$, and hence $\phi^{(3)}(s) \rightarrow 0$ as $s \rightarrow \infty$.
  Differentiating~\eqref{eq:FullODE} we see that $\phi^{(5)}$ is also bounded, and hence $\phi^{(4)}(s) \rightarrow 0$ for $s \rightarrow \infty$.
  Using these facts with~\eqref{eq:FullODE}, we see that $\sin(2 \phi(s)) \rightarrow 0$ as $s \rightarrow \infty$.
  Therefore, we see that there is a $l \in\Z$ such that $\phi(s) \rightarrow \frac{\pi}{2}l$ as $s \rightarrow \infty$.
  However, since we know that $\Energy[\phi](s) > 0$ for $s \in \R$, $l$ must be odd.
\end{proof}

Before we continue we want to collect some facts about the structure of $W^u(0)$ locally around the origin.
For this little detour, we assume that $d \in \N_{\geq 4}$.

From the stable manifold theorem, we know that $W^u(0) \subset \R^4$ is a two-dimensional $C^\infty$-manifold, and that its tangent plane at the origin is spanned by $(1, 1, 1, 1)^T$ and $(1, 3, 9, 27)^T$.
Since~\eqref{eq:FullODE} is invariant under $\phi \mapsto -\phi$, $W^u(0)$ is symmetric under reflection through the origin.
Locally around origin, we can parameterise $W^u(0)$ by
\begin{equation}\label{eq:W0LocalParam}
  \Gamma : z \mapsto \gamma_1(z) \eta_1 + \gamma_2(z) \eta_2 + z_1 \eta_3 + z_2 \eta_4 \text{ for } z := (z_1, z_2) \in \mathcal{U},
\end{equation}
where:
\begin{itemize}
\item
  $\mathcal{U} \subset \R^2$ is an open ball centred at the origin;
\item
  $\gamma := (\gamma_1, \gamma_2) \in C^\infty(\mathcal{U}; \R^2)$, $\gamma(0) = 0$, $D\gamma(0) = 0$, and $\gamma(-z) = -\gamma(z)$ for $z \in \mathcal{U}$; and
\item
  $\eta_i$, for $i \in \{1, 2, 3, 4\}$, are the eigenvectors of the linearization of~\eqref{eq:FullODE}, with $\eta_1$ and $\eta_2$ corresponding to the negative eigenvalues, and $\eta_3 = (1, 1, 1, 1)^T$ and $\eta_4 = (1, 3, 9, 27)^T$ correspond to the positive eigenvalues one and three, respectively.
\end{itemize}
By choosing $\eps > 0$ sufficiently small, if $(\phi(s), \phi'(s), \phi''(s), \phi^{(3)}(s)) = \Gamma(z(s))$, where $|z(s)| < \eps$ and $\phi$ solves~\eqref{eq:FullODE}, then
\begin{equation}\label{eq:W0ReducedODE}
  \p_s z =
  \begin{pmatrix}
    1 & 0 \\
    0 & 3
  \end{pmatrix}
  z
  + \cO(|z|^3) \text{ as } z \rightarrow 0.
\end{equation}
We can see that, locally around the origin, $W^u(0)$ is a graph over the $(\phi, \phi'')$-plane.
More precisely, from the inverse function theorem, there exists a $\mathcal{W} := (\mathcal{W}_1, \mathcal{W}_2) \in C^1(\mathcal{U}'; \R^2)$, where $\mathcal{U}' \subset \R^2$ is an open ball centred at the origin, such that locally around the origin we can parameterise $W^u(0)$ by
\begin{equation}\label{eq:W0LocalGraphParam}
  z \mapsto (z_1, \mathcal{W}_1(z), z_2, \mathcal{W}_2(z)) \text{ for } z := (z_1, z_2) \in \mathcal{U}'.
\end{equation}
Furthermore, $\mathcal{W}(0) = 0$, $\mathcal{W}(-z) = -\mathcal{W}(z)$ for $z \in \mathcal{U}'$, and
\begin{equation}\label{eq:DW0}
  D\mathcal{W}(0) = \frac{1}{4}
  \begin{pmatrix}
    3 & 1 \\
    -9 & 13
  \end{pmatrix}.
\end{equation}
We use these facts in our next lemma, and shift our focus back to $d = 5$.

\begin{lemma}\label{lem:OrbitsInLocalUnstableManifold}
  Suppose that $\phi \in C^\infty((-\infty, s_f); \R)$ is a non-trivial solution to~\eqref{eq:5dODE} in $W^u(0)$, where $s_f \in (-\infty, \infty]$ is its maximal time of existence.
  Then there exists a $s_0 < s_f$ such that either:
  \begin{enumerate}[(i)]
  \item
    $(\phi(s), \phi''(s)) \in \cC \cup -\cC$ for all $s < s_0$; or
  \item\label{item:AlwaysOutsideC}
    $(\phi(s), \phi''(s)) \in \R^2 \setminus \overline{(\cC \cup -\cC)}$ for all $s < s_0$.
  \end{enumerate}
  Furthermore, if we are in the situation described by~(\ref{item:AlwaysOutsideC}), then there exists a $s_\star \in (-\infty, s_f)$ such that $|\phi''(s_\star)| = 2 \sqrt{6}$, and hence $s_f < \infty$ by \Cref{thm:BiharmBlowup}.
\end{lemma}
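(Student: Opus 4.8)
The plan is to reduce everything to the local structure of $W^u(0)$ near the origin recorded in \eqref{eq:W0LocalParam}--\eqref{eq:DW0}. Recall that membership in $W^u(0)$ forces $(\phi(s),\phi'(s),\phi''(s),\phi^{(3)}(s)) \to (0,0,0,0)$ as $s \to -\infty$, and that near the origin $W^u(0)$ is the graph \eqref{eq:W0LocalGraphParam} over the $(\phi,\phi'')$-plane, on which the flow is governed, in the eigen-coordinates of \eqref{eq:W0LocalParam}, by the reduced system \eqref{eq:W0ReducedODE}, an unstable node with eigenvalues $1$ and $3$ whose eigendirections are $\eta_3 = (1,1,1,1)^T$ and $\eta_4 = (1,3,9,27)^T$. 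By the strong unstable manifold theorem, together with the classical fact that every orbit of a planar unstable node off its strong unstable manifold limits onto the fixed point tangent to the slow eigendirection, a non-trivial orbit on $W^u(0)$ either approaches the origin tangent to $\eta_3$ — in which case $\phi''(s)/\phi(s) \to 1$ — or lies on the one-dimensional strong unstable manifold $W^{uu}(0)$ and approaches the origin tangent to $\eta_4$ — in which case $\phi''(s)/\phi(s) \to 9$. In either case $\phi(s) \neq 0$ with a fixed sign for all sufficiently negative $s$, and, using the symmetry $(\phi,\phi',\phi'',\phi^{(3)}) \mapsto -(\phi,\phi',\phi'',\phi^{(3)})$ of \eqref{eq:5dODE} (which interchanges $\cC$ and $-\cC$), we may assume $\phi(s) > 0$ below some threshold.

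The second step is to read off the position of $(\phi(s),\phi''(s))$ relative to $\cC$. For $\phi_0 \in (0,\pi/2)$ we have $\cF(\phi_0) = 2\sqrt6\sin\phi_0$ and $\cF(\pi-\phi_0) = 2\sqrt6$, so for $\phi > 0$ small the condition $(\phi,\phi'') \in \cC$ reduces to $\phi'' < 2\sqrt6\sin\phi$ (the bound $\phi'' > -2\sqrt6$ being inactive near the origin), whereas $(\phi,\phi'') \in \R^2 \setminus \overline{(\cC \cup -\cC)}$ reduces to $\phi'' > 2\sqrt6\sin\phi$. Since $2\sqrt6\sin\phi = (2\sqrt6+o(1))\phi$ as $\phi \searrow 0$ and $1 < 2\sqrt6 < 9$, the tangent-$\eta_3$ orbit has $\phi''(s) = (1+o(1))\phi(s) < 2\sqrt6\sin\phi(s)$ for all sufficiently negative $s$, so alternative (i) holds; and the tangent-$\eta_4$ orbit has $\phi''(s) = (9+o(1))\phi(s) > 2\sqrt6\sin\phi(s)$ for all sufficiently negative $s$, so alternative (ii) holds (here $0 < \phi(s) < \pi$ excludes membership of $\overline{\cC}$, and $\phi(s) > 0$ excludes $\overline{-\cC}$).

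For the final assertion, assume alternative (ii). There the orbit approaches the origin tangent to $\eta_4 = (1,3,9,27)^T$, so $\phi'(s) = (3+o(1))\phi(s)$, $\phi''(s) = (9+o(1))\phi(s)$, and $\phi^{(3)}(s) = (27+o(1))\phi(s)$ as $s \to -\infty$. Fixing a sufficiently negative $\tau$ and using $9 > 3$ and $27 > 3\cdot 3$, the numbers $\phi(\tau)$, $\phi'(\tau)$, $\phi''(\tau) - 3\phi(\tau)$ and $\phi^{(3)}(\tau) - 3\phi'(\tau)$ are all strictly positive; equivalently $(\phi(\tau),\phi''(\tau))$ and $(\phi'(\tau),\phi^{(3)}(\tau))$ both lie in the cone $\cK$. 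Applying \Cref{lem:ConeBlowup} with starting time $\tau$ produces $s_1 \in [\tau,s_f)$ with $\phi''(s_1) \geq 2\sqrt6$. Since $\phi''$ is continuous on $(-\infty,s_f)$ with $\phi''(s) \to 0$ as $s \to -\infty$, the intermediate value theorem yields $s_\star \in (-\infty,s_1]$ with $\phi''(s_\star) = 2\sqrt6$, hence $|\phi''(s_\star)| = 2\sqrt6$; in the mirror case $\phi < 0$ one obtains $\phi''(s_\star) = -2\sqrt6$ instead. \Cref{thm:BiharmBlowup} then gives $s_f < \infty$, as claimed.

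Everything downstream of the tangency dichotomy is soft — it is only the numerical ordering of the slopes $1$, $2\sqrt6$, $9$ and of the $\cK$-cone inequalities $9 > 3$, $27 > 9$, plus \Cref{lem:ConeBlowup} and \Cref{thm:BiharmBlowup} — so I expect the main obstacle to be the first step, i.e., ruling out any asymptotic behaviour of $\phi$ other than tangency to $\eta_3$ or to $\eta_4$. If one prefers not to invoke the pertinent node structure as a black box, it can be extracted from \eqref{eq:W0ReducedODE}: an orbit not lying on $W^{uu}(0)$ has $z_1(s) \neq 0$ for $s$ near $-\infty$ (near the origin $\frac{d}{ds}\log|z_1| = 1 + O(|z|^2) > 0$, so $z_1$ cannot change sign as $s$ decreases), and then $r := z_2/z_1$ satisfies $r' = 2r + E(s)$ with $|E(s)| \leq C|z(s)|^2(1+|r(s)|)^2$, whence a Gronwall-type estimate gives $r(s) \to 0$ as $s \to -\infty$; transporting this through \eqref{eq:W0LocalGraphParam} back to the ratio $\phi''/\phi$ then completes the dichotomy. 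The bookkeeping in this argument, and keeping the two parameterisations \eqref{eq:W0LocalParam} and \eqref{eq:W0LocalGraphParam} straight, is the only genuinely delicate point.
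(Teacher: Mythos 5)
Your main argument is correct, but it reaches the dichotomy by a genuinely different route than the paper. You import the classical asymptotic-direction (node tangency) theorem for the reduced flow \eqref{eq:W0ReducedODE} on $W^u(0)$: every non-trivial orbit approaches the origin tangent to $\eta_3$ or to $\eta_4$, hence $\phi''/\phi \to 1$ or $\phi''/\phi \to 9$ as $s \to -\infty$, and since the slope of $\p\cC$ at the origin is $2\sqrt{6}$, strictly between $1$ and $9$, the orbit is eventually either strictly inside $\cC \cup -\cC$ or strictly outside its closure; in the latter case the $\eta_4$-asymptotics place $(\phi,\phi'')$ and $(\phi',\phi^{(3)})$ strictly inside $\cK$, so \Cref{lem:ConeBlowup} and an intermediate-value argument give $|\phi''(s_\star)| = 2\sqrt{6}$ and \Cref{thm:BiharmBlowup} applies. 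The paper avoids any tangency theorem: it only uses \eqref{eq:DW0}/\eqref{eq:ReducedODE} to check the direction in which orbits on $W^u(0)$ can cross $\p(\cC\cup-\cC)$ near the origin ($\phi' < 0$ when $\phi = 0$, $\phi'' < 0$; $\phi^{(3)} - U_0'(\phi)\phi' > 0$ on $\phi'' = U_0(\phi)$, $\phi > 0$), concluding that membership in $\cC \cup -\cC$ is negatively invariant locally, and in case (\ref{item:AlwaysOutsideC}) it reads the cone hypotheses of \Cref{lem:ConeBlowup} off $\phi'' \geq U_0(\phi) \geq 3\phi$ together with \eqref{eq:ReducedODE}. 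Your route gives a sharper asymptotic picture at the cost of a black-box citation; the paper's is more elementary and self-contained.

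One caveat: your optional self-contained derivation of the tangency dichotomy is flawed as written. From $z_1' = z_1 + \cO(|z|^3)$ one only gets $\frac{d}{ds}\log|z_1| = 1 + \cO(|z|^3)/|z_1|$, not $1 + \cO(|z|^2)$; the cubic error depends on $z_2$ as well and is not controlled by $|z_1|$ near the $z_2$-axis, so the claim that $z_1$ cannot vanish or change sign for orbits off $W^{uu}(0)$ does not follow, and the subsequent Gronwall step for $r = z_2/z_1$ likewise needs an a priori bound on $r$ before the error term is manageable. If you want to avoid citing the classical result, the clean fix is a backward-invariant cone argument for \eqref{eq:W0ReducedODE} (e.g.\ on $|z_2| = |z_1|$ one has $\p_s(z_2^2 - z_1^2) = 4z_1^2 + \cO(|z|^4) > 0$, so $\{|z_2| \leq |z_1|\}$ is preserved as $s$ decreases), which is essentially what the paper's crossing-direction computation does directly in $(\phi,\phi'')$-coordinates.
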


\begin{proof}
  Combining~\eqref{eq:W0LocalGraphParam} and~\eqref{eq:DW0}, we have, for sufficiently small $|(\phi, \phi'')|$,
  \begin{equation}\label{eq:ReducedODE}
    \p_s
    \begin{pmatrix}
      \phi \\
      \phi''
    \end{pmatrix}
    =
    D \mathcal{W}(0)
    \begin{pmatrix}
      \phi \\
      \phi''
    \end{pmatrix}
    + o(|(\phi, \phi'')|) \text{ as } |(\phi, \phi'')| \rightarrow 0.
  \end{equation}
  In what follows, we consider $(\phi', \phi^{(3)})$ when $(\phi, \phi'') \in \p \cC \setminus \{0\}$ and $|(\phi, \phi'')|$ is sufficiently small.

  If $\phi = 0$ and $\phi'' < 0$, then $\phi' < 0$.
  Now, if $\phi'' = 2\sqrt{6} \sin\phi = U_0(\phi)$, where $\phi > 0$, we have
  \begin{equation*}
    \phi^{(3)} - U_0'(\phi) \phi' > 0.
  \end{equation*}
  We combine these observations with the invariance of~\eqref{eq:5dODE} under $\phi \mapsto -\phi$ to observe for a sufficiently small neighbourhood of $W^u(0)$ around the origin that $(\phi, \phi'') \in \cC \cup -\cC$ is a negative invariant property, and if $(\phi, \phi'') \in \p (\cC \cup -\cC) \setminus \{0\}$ at some $s$-time then for all earlier $s$-times we have $(\phi, \phi'') \in \cC \cup -\cC$.

  Now we move onto proving the remainder of this lemma.
  For this we have a $s_0 < s_f$ such that $(\phi(s), \phi''(s)) \in \R^2 \setminus \overline{(\cC \cup -\cC)}$ for all $s < s_0$.
  Note that $\phi(s) \neq 0$ for all $s < s_0$.

  Due to the invariance of~\eqref{eq:5dODE} under $\phi \mapsto -\phi$, we assume without loss of generality that $\phi(s) > 0$ for all $s < s_0$.
  We can find a $s_1 < s_0$ so that $|(\phi(s_1), \phi''(s_1))|$ is as small as we wish, and $\phi''(s_1) \geq U_0(\phi(s_1)) \geq 3 \phi(s_1)$.
  From~\eqref{eq:ReducedODE}, we have $\phi'(s_1) > 0$ and
  \begin{equation*}
    \phi^{(3)}(s_1) - 3 \phi'(s_1) > 0.
  \end{equation*}
  An application of \Cref{lem:ConeBlowup} finishes the proof.
\end{proof}

Now the following theorem easily follows from combining \Cref{thm:BiharmBlowup}, \Cref{cor:ExitMiddleBlowup}, \Cref{lem:OrbitsInLocalUnstableManifold}, and the invariance of~\eqref{eq:5dODE} under $\phi \mapsto -\phi$.

\begin{theorem}\label{thm:BlowupOrExistsForever}
  Let $\phi \in C^\infty((-\infty, s_f); \R)$ a non-trivial solution of~\eqref{eq:5dODE} in $W^u(0)$, where $s_f \in (-\infty, \infty]$ is the maximal time of existence of $\phi$.
  Then one of the following statements is true:
  \begin{enumerate}[(i)]
  \item
    $s_f = \infty$ and $(\phi(s), \phi''(s)) \in \cC \cup -\cC$ for all $s \in \R$; or
  \item
    there exists a $s_\star \in (-\infty, s_f)$ such that $\phi''(s_\star) = 2 \sqrt{6}$ (or, $\phi''(s_\star) = -2 \sqrt{6}$), and hence $s_f < \infty$ and $\phi^{(i)}(s) \rightarrow \infty$ (resp., $\phi^{(i)}(s) \rightarrow -\infty$) as $s \nearrow s_f$ for $i \in \{0, 1, 2, 3\}$.
  \end{enumerate}
\end{theorem}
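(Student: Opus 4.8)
The plan is to combine the dichotomy furnished by \Cref{lem:OrbitsInLocalUnstableManifold} with \Cref{cor:ExitMiddleBlowup} and \Cref{thm:BiharmBlowup}, using the reflection symmetry $\phi \mapsto -\phi$ of~\eqref{eq:5dODE} to halve the work. First I would apply \Cref{lem:OrbitsInLocalUnstableManifold} to obtain an $s_0 < s_f$ for which either (a) $(\phi(s), \phi''(s)) \in \cC \cup -\cC$ for all $s < s_0$, or (b) $(\phi(s), \phi''(s)) \in \R^2 \setminus \overline{\cC \cup -\cC}$ for all $s < s_0$. In case (b) that same lemma already produces an $s_\star$ with $|\phi''(s_\star)| = 2\sqrt{6} = c_\star$, whence \Cref{thm:BiharmBlowup} gives $s_f < \infty$ together with $\phi^{(i)}(s) \rightarrow \pm\infty$ as $s \nearrow s_f$ for $i \in \{0,1,2,3\}$ (with the sign matching that of $\phi''(s_\star)$); this is exactly alternative (ii) of the theorem, so this branch is complete.

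Now suppose we are in case (a). Since $\cC \subseteq \{x > 0\}$ and $-\cC \subseteq \{x < 0\}$, we have $\phi(s) \neq 0$ for all $s < s_0$, so by the intermediate value theorem $\phi$ has a constant sign on $(-\infty, s_0)$, and therefore the orbit $(\phi, \phi'')$ lies either in $\cC$ throughout $(-\infty, s_0)$ or in $-\cC$ throughout $(-\infty, s_0)$. Replacing $\phi$ by $-\phi$ when necessary --- which preserves~\eqref{eq:5dODE}, membership in $W^u(0)$, non-triviality, and the value of $s_f$, while exchanging $\cC \leftrightarrow -\cC$ and the two sign alternatives in the conclusion --- I may assume $(\phi(s), \phi''(s)) \in \cC$ for all $s < s_0$. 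Set
\[
  s_1 := \sup\left\{ \sigma \in [s_0, s_f) \: (\phi(s), \phi''(s)) \in \cC \text{ for all } s < \sigma \right\},
\]
so that, by a standard down-set argument, $(\phi(s), \phi''(s)) \in \cC$ for every $s < s_1$.

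If $s_1 < s_f$, then, because $\cC$ is open and $(\phi(s), \phi''(s)) \in \cC$ for $s < s_1$, continuity forces $(\phi(s_1), \phi''(s_1)) \in \p\cC$; choosing any reference time $s' < s_1$ we have $(\phi(s), \phi''(s)) \in \cC$ on $[s', s_1)$ with $s' < s_1 < s_f$, so \Cref{cor:ExitMiddleBlowup} applies and delivers alternative (ii) of the theorem. If instead $s_1 = s_f$, then $(\phi(s), \phi''(s)) \in \cC$ for all $s \in (-\infty, s_f)$; since $\cC \subseteq (0, \pi) \times (-2\sqrt{6}, 2\sqrt{6})$ is bounded, $(\phi, \phi'')$ is bounded on $(-\infty, s_f)$, and \Cref{lem:BoundedQBoundedQDot}, applied on each interval $(-\infty, s']$ with $s' < s_f$, bounds $(\phi', \phi^{(3)})$ by a constant independent of $s'$. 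Hence $(\phi, \phi', \phi'', \phi^{(3)})$ stays inside a fixed compact subset of $\R^4$, so the standard continuation criterion for ODEs gives $s_f = \infty$, which is alternative (i) of the theorem.

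The step that requires genuine care is the reduction to a single cone: pinning the orbit into $\cC$ alone (not merely $\cC \cup -\cC$) is what makes the reflection legitimate and permits the use of \Cref{cor:ExitMiddleBlowup}, which is phrased for $\cC$. The only other point to watch is that $\cC$ is bounded, so that in the $s_1 = s_f$ case the passage ``orbit confined to a compact set'' $\Rightarrow$ ``$s_f = \infty$'' is truly justified by \Cref{lem:BoundedQBoundedQDot}; the remainder is bookkeeping among the quoted results, which is why the theorem follows so quickly.
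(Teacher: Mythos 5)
Your proposal is correct and follows essentially the same route as the paper, which simply states that the theorem "easily follows" from combining \Cref{thm:BiharmBlowup}, \Cref{cor:ExitMiddleBlowup}, \Cref{lem:OrbitsInLocalUnstableManifold}, and the $\phi \mapsto -\phi$ symmetry. Your write-up supplies the details the paper leaves implicit (connectedness pinning the orbit into a single cone, the exit-time argument via $\p\cC$, and boundedness plus \Cref{lem:BoundedQBoundedQDot} with the continuation criterion to get $s_f = \infty$ in alternative (i)), all of which are sound.
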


Next, we show the existence of a heteroclinic orbit, contained within $\cC$, connecting the origin and $(\pi/2, 0, 0, 0)$.

\begin{theorem}\label{thm:HeteroclinicExists}
  There exists $\phi \in C^\infty(\R; \R)$ which is a solution of~\eqref{eq:5dODE} in $W^u(0)$ such that $(\phi(s), \phi''(s)) \in \cC$ for all $s \in \R$, and $(\phi(s), \phi'(s), \phi''(s), \phi^{(3)}(s)) \rightarrow (\pi/2, 0, 0, 0)$ as $s \rightarrow \infty$.
\end{theorem}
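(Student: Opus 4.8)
The plan is to use a shooting/topological argument along the two-dimensional unstable manifold $W^u(0)$, exactly in the spirit of Hofer–Toland, using the dichotomy established in \Cref{thm:BlowupOrExistsForever}. First I would parameterise the orbits emanating from the origin inside $W^u(0)$. Using the local graph parameterisation~\eqref{eq:W0LocalGraphParam}, pick a small circle $|z| = \eps$ in the $(z_1, z_2)$-coordinates and, by the invariance under $\phi \mapsto -\phi$ together with the remark that orbits cannot exit $\cC$ at $(0,0)$ or $(\pi,0)$, restrict attention to the arc of initial conditions for which the orbit enters $\cC$ (i.e.\ $\phi > 0$ small, $(\phi,\phi'')$ just inside $\cC$). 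Each such initial condition launches a unique solution $\phi_z$ of~\eqref{eq:5dODE}. Define two subsets of this parameter arc: $\mathcal{B}$, the set of $z$ for which the orbit eventually blows up in finite $s$-time (equivalently, by \Cref{thm:BlowupOrExistsForever}, for which $\phi''$ reaches $2\sqrt6$); and its counterpart under the symmetry (orbits reaching $-2\sqrt6$, after crossing into $-\cC$). The heteroclinic orbit will be found at a boundary point of $\mathcal{B}$.

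The key steps, in order, are: (1) show $\mathcal{B}$ is nonempty and open. Nonemptiness: an orbit that enters the cone $\cK$-condition region blows up by \Cref{lem:ConeBlowup}; concretely one exhibits a $z$ with $(\phi,\phi''),(\phi',\phi^{(3)})\in\cK$ shortly after launch (e.g.\ near the part of $\partial\cC$ where $\phi''=U_0(\phi)$ with the lower bound $\phi^{(3)} - U_0'(\phi)\phi' > 0$ from the proof of \Cref{lem:OrbitsInLocalUnstableManifold}, which forces the orbit into the $\cK$-configuration, or else directly near $(\pi,0)$ using \Cref{lem:RightSideExit}). Openness: finite-time blowup with $\phi''$ hitting $2\sqrt6$ is an open condition by continuous dependence on initial data (the time $s_\star$ at which $\phi'' = 2\sqrt6$ is finite, and the flow depends continuously on $z$ up to that time). (2) By the symmetry $\phi \mapsto -\phi$, the "opposite" blow-up set (orbits that instead reach $-2\sqrt6$) is also nonempty and open, and after passing to $-\cC$ one sees it corresponds to the far end of the parameter arc. (3) Since the parameter arc is connected and these two open sets are disjoint (an orbit cannot reach both $2\sqrt6$ and $-2\sqrt6$, by \Cref{thm:BiharmBlowup} monotonicity of $\phi^{(3)}$ once $|\phi''|\geq c_\star$), there is a $z_\ast$ in neither. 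By \Cref{thm:BlowupOrExistsForever}, the orbit $\phi := \phi_{z_\ast}$ satisfies $s_f = \infty$ and $(\phi(s),\phi''(s)) \in \cC \cup -\cC$ for all $s$; after applying the symmetry if needed, $(\phi,\phi'') \in \cC$, so in particular $\{(\phi(s),\phi''(s)):s\in\R\}$ is bounded.

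(4) Apply \Cref{lem:LimitWhenBoundedForAllTime}: boundedness of $\mathcal A$ forces $(\phi,\phi',\phi'',\phi^{(3)}) \to ((k+\tfrac12)\pi, 0,0,0)$. Since the orbit stays in $\cC \subset \{0 < x < \pi\}$, the only possibility is $\phi_\infty = \pi/2$, giving the claimed heteroclinic orbit; smoothness on all of $\R$ is immediate from $s_f = \infty$ and standard ODE regularity.

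The main obstacle I expect is step (1)–(3): making the shooting argument rigorous requires care that the "entering $\cC$" arc of parameters is genuinely an interval whose two ends lie in the two opposite blow-up sets, that these sets are open \emph{relative to that arc} (one must rule out orbits that leave $\cC$ immediately through $\{0\}$ or $(\pi,0)$—handled by the Remark after \Cref{lem:RightSideExit}), and that the complement is therefore nonempty by connectedness. A secondary subtlety is verifying that the blow-up condition is truly open: one needs that whenever an orbit attains $\phi'' = 2\sqrt6$ it does so \emph{transversally} (or at least that nearby orbits also attain it), which follows because at such a point \Cref{thm:BiharmBlowup}/\Cref{lem:BiharmonicODEBlowup} give $\phi^{(3)} > 0$ there after an arbitrarily small perturbation of time, so $\phi''$ is strictly increasing through $2\sqrt6$ and the hitting condition persists under perturbation.
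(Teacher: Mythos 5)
Your proposal is correct and takes essentially the same route as the paper: a shooting argument over the connected arc of initial conditions in $W^u(0)$ near the origin, with \Cref{thm:BlowupOrExistsForever}, \Cref{lem:LeftSideExit} and \Cref{lem:RightSideExit} pinning opposite blow-up signs at the two ends, continuity (openness) of the sign of the first crossing of $\pm 2\sqrt{6}$ guaranteed by \Cref{thm:BiharmBlowup}, and \Cref{lem:LimitWhenBoundedForAllTime} identifying the limit $(\pi/2,0,0,0)$. The paper packages the connectedness step as a continuous sign function $g(\theta)=\sgn(\phi_\theta''(\tau(\theta)))$ on $[-\pi/2,\theta_0]$ that cannot be constant, which is the same argument as your two disjoint open sets covering a connected parameter set.
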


\begin{proof}
  By \Cref{lem:LimitWhenBoundedForAllTime} it suffices to show that there exists a $\phi \in C^\infty(\R; \R)$ solving~\eqref{eq:5dODE} in $W^u(0)$ such that $(\phi(s), \phi''(s)) \in \cC$ for all $s \in \R$.
  We will proceed via contradiction, and hence we assume that there is no such $\phi$.

  The proof of \Cref{lem:OrbitsInLocalUnstableManifold} describes the dynamics of~\eqref{eq:5dODE} in $W^u(0)$ locally around zero.
  Therefore, as $s \rightarrow -\infty$ and $(\phi(s), \phi'(s), \phi''(s), \phi^{(3)}(s))$ enters a sufficiently small neighbourhood of $W^u(0)$ around the origin,~\eqref{eq:W0LocalGraphParam} gives $(\phi'(s), \phi^{(3)}(s)) = \mathcal{W}((\phi(s), \phi''(s)))$.

  We define $\iota : \R \rightarrow W^u(0)$ via
  \begin{equation*}
    \iota(\theta) := (\eps_0 \cos\theta, \mathcal{W}_1((\eps_0 \cos\theta, \eps_0 \sin\theta)), \eps_0 \sin\theta, \mathcal{W}_2((\eps_0 \cos\theta, \eps_0 \sin\theta))),
  \end{equation*}
  where $\eps_0 \in (0, 0.1)$ is sufficiently small so that $\iota$ is well-defined.
  Let $\theta_0$ be the unique value in $[0, \pi/2]$ such that $(\eps_0 \cos\theta_0, \eps_0 \sin\theta_0) \in \p \cC$.

  We consider $\phi_\theta \in C^\infty((-\infty, s_f); \R)$, for $\theta \in [-\pi/2, \theta_0]$, where $\phi_\theta$ is the solution to~\eqref{eq:5dODE} with $(\phi_\theta(0), \phi_\theta'(0), \phi_\theta''(0), \phi_\theta^{(3)}(0)) = \iota(\theta)$, and $s_f \in (0, \infty)$ is the maximal time of existence of $\phi_\theta$.
  From the proof of \Cref{lem:OrbitsInLocalUnstableManifold}, we know for $\theta \in [-\pi/2, \theta_0]$ that $(\phi_\theta(s), \phi_\theta''(s)) \in \cC$ for $s < 0$, as long as $\eps_0$ is sufficiently small, which we ensure.

  For $\theta \in [-\pi/2, \theta_0]$, we let $\tau(\theta)$ be the first $s \in [0, s_f)$ such that $|\phi_\theta''(s)| \geq 2\sqrt{6}$, if there is no such $s$ then we set $\tau(\theta) = \infty$.

  Note that for $\phi$, a solution to~\eqref{eq:5dODE}, it cannot blowup while $|\phi''|$ is bounded.
  From this and \Cref{cor:ExitMiddleBlowup}, we know that $\tau(\theta) < \infty$ for all $\theta \in [-\pi/2, \theta_0]$.
  We define $g : [-\pi/2, \theta_0] \rightarrow \{-1, 1\}$ via $g(\theta) := \sgn(\phi_\theta''(\tau(\theta)))$.
  For $\theta \in [-\pi/2, \theta_0]$, \Cref{thm:BiharmBlowup} implies that $|\phi_\theta''(s)| \geq 2\sqrt{6}$ for $s \in [\tau(\theta), s_f)$.
  Therefore, from continuous dependence on initial conditions we have that $g$ is continuous, and hence $g$ must be constant.
  However, \Cref{lem:LeftSideExit} implies $g(\theta_0) = 1$, and \Cref{lem:RightSideExit} implies $g(-\pi/2) = -1$, and hence we have our desired contradiction.
\end{proof}

Finally, \Cref{thm:EntireSolutionsAreHeteroclinic} is a simple consequence of combining \Cref{lem:LimitWhenBoundedForAllTime}, \Cref{thm:BlowupOrExistsForever}, and \Cref{thm:HeteroclinicExists}.

\section{Solutions \texorpdfstring{to~\eqref{eq:PsiProblem}}{} yield smooth biharmonic maps}\label{sect:GivesSmoothBiharmonicMaps}
In this section we prove \Cref{thm:SmoothBiharmonicMap}, that is, we show that solutions to~\eqref{eq:PsiProblem} give rise to smooth biharmonic maps.
Our arguments follow~\cite[Lemma 13 and Lemma 15]{MKC15} very closely.
Using this result we finally prove \Cref{thm:InfiniteWrapAround}.

The next lemma obtains estimates on the derivatives of $u = \Upsilon(\psi)$, where $\psi$ is a solution to~\eqref{eq:PsiProblem}.
A consequence of this is that such $u$ are indeed in the energy space $H^2(B^d(0, 1); S^d)$, where $d \in \{5, 6, 7\}$.

\begin{lemma}\label{lem:uSobolev}
  Let $d \in \{5, 6, 7\}$, $\psi \in C([0,1];\R) \cap C^{\infty}((0,1];\R)$, with $\psi(0) = 0$, be a solution to~\eqref{eq:PsiProblem}, and
  \begin{equation*}
    u = \Upsilon(\psi) \in C(\overline{B^d(0,1)}; S^d) \cap C^{\infty}(\overline{B^d(0,1)} \setminus \{0\};S^d).
  \end{equation*}
  Then $u \in W^{2, p}(B^d(0, 1); S^d)$ for all $p \in (1, \infty)$.
\end{lemma}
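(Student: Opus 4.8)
The plan is to obtain pointwise bounds on $|D^\alpha u(x)|$ for $|\alpha| \le 2$ in terms of $\psi$ and its derivatives, and then to track the behaviour of these bounds as $|x| \searrow 0$; the content of the lemma is entirely a statement about the origin, since $u$ is already smooth on $\overline{B^d(0,1)} \setminus \{0\}$. Writing $r = |x|$ and recalling the ansatz~\eqref{eq:EquiAnsatzEq}, a direct computation (following~\cite[Lemma 13]{MKC15}) shows that the first derivatives of $u$ involve the quantities $\psi'(r)$ and $\sin\psi(r)/r$, while the second derivatives involve $\psi''(r)$, $\psi'(r)/r$, $\sin\psi(r)/r^2$, and terms of the same or better order. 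Thus the $W^{2,p}$ claim reduces, via integration in polar coordinates where the Jacobian contributes a factor $r^{d-1}$, to showing that
\begin{equation*}
  \int_0^1 \left( |\psi''(r)|^p + \left|\frac{\psi'(r)}{r}\right|^p + \left|\frac{\sin\psi(r)}{r^2}\right|^p \right) r^{d-1} \dd{r} < \infty
\end{equation*}
for every $p \in (1, \infty)$, with $d \in \{5,6,7\}$.

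The key step, therefore, is an asymptotic analysis of $\psi$ near $r = 0$, and this is where the $W^u(0)$ structure enters. Since $u = \Upsilon(\psi)$ and $\psi(0)=0$, the function $\phi(s) = \psi(e^s)$ solves~\eqref{eq:FullODE} with $\phi(s) \to 0$ as $s \to -\infty$, so by \Cref{thm:BiharmonicInUnstable} the whole orbit $(\phi, \phi', \phi'', \phi^{(3)})$ tends to the origin, i.e.\ $\phi$ lies in $W^u(0)$. The linearisation at the even critical point has eigenvalues $3, 1, 1-d, 3-d$, and the unstable directions are $3$ and $1$ with eigenvectors $(1,1,1,1)^T$ and $(1,3,9,27)^T$. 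Hence, as $s \to -\infty$, $\phi(s) = O(e^s)$, $\phi'(s) = O(e^s)$, $\phi''(s) = O(e^s)$, $\phi^{(3)}(s) = O(e^s)$ (the slowest decaying mode being $e^s$). Translating back to $r = e^s$: $\psi(r) = O(r)$, $r\psi'(r) = \phi'(s) = O(r)$ so $\psi'(r) = O(1)$; $r^2\psi''(r) = \phi''(s) - \phi'(s) = O(r)$ so $\psi''(r) = O(r^{-1})$; and, since $\sin\psi(r) = \psi(r) + O(\psi(r)^3) = O(r)$, we get $\sin\psi(r)/r^2 = O(r^{-1})$. (If one wishes to be careful, note that the $e^s$ mode could in principle carry a nonzero coefficient while the analysis only needs the crude bound; a sharper expansion $\phi(s) = a e^s + O(e^{2s})$, hence $\psi(r) = ar + O(r^2)$, can be extracted from~\eqref{eq:W0ReducedODE} if needed, but is not required here.)

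Plugging these bounds into the integral above, the worst terms behave like $r^{-p} \cdot r^{d-1} = r^{d-1-p}$, which is integrable near $0$ precisely when $d - 1 - p > -1$, i.e.\ $p < d$. To cover all $p \in (1,\infty)$ and not just $p < d$, I would bootstrap: once $u \in W^{2,p}$ for some range of $p$, Sobolev embedding and the ODE~\eqref{eq:PsiProblem} itself can be used to improve the decay of $\psi$ near the origin — but in fact the cleaner route, following~\cite[Lemma 13]{MKC15}, is to observe that the $e^s$ (i.e.\ $O(r)$) behaviour of $\phi$ already forces $\psi$ to vanish linearly, and to then differentiate the relation $\phi'' = \phi^{(3)} \cdot (\text{stuff})$ — more precisely, to re-examine the unstable manifold: since the next eigenvalue after the top mode governing $\psi \sim r$ would only \emph{improve} regularity, the genuinely dangerous mode is $e^s$ and it produces $\psi(r) \sim ar$, which is smooth. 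The subtle point is that a term like $\sin\psi/r^2$, despite looking singular, is actually $\sim a/r$, and $r^{-1}$ is in $L^p_{\mathrm{loc}}(\R^d, r^{d-1}dr)$ iff $p < d$; so to reach all $p < \infty$ one genuinely needs the improved expansion $\psi(r) = ar + o(r)$ together with the \emph{cancellation} $\psi(r) - r\psi'(r) \to 0$ faster, making $\psi''$ and $\sin\psi/r^2 - a/r$ less singular. I expect the main obstacle to be exactly this: extracting enough terms of the asymptotic expansion of $\phi$ on $W^u(0)$ — using~\eqref{eq:W0ReducedODE} to write $\phi(s) = a e^s + b e^{3s} + (\text{faster})$ and hence $\psi(r) = ar + br^3 + o(r^3)$ — so that $r^2\psi''(r)$, $r\psi'(r) - \psi(r)$, and $\sin\psi(r) - \psi(r)$ all decay like $o(r)$ or better, killing the $r^{-1}$ singularity and leaving integrands that are bounded (or at worst in $L^p$ for all $p$) near the origin. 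Once that asymptotic bookkeeping is done, the integration is routine and $u \in W^{2,p}(B^d(0,1); S^d)$ for all $p \in (1,\infty)$ follows.
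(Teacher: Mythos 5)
Your outline identifies the right mechanism, but there is a genuine gap: the step you yourself flag as ``the main obstacle'' --- upgrading the crude bounds $\psi(r)=O(r)$, $\psi''(r)=O(1/r)$ to a refined expansion in which the $a/r$ singularities cancel --- is exactly the content of the lemma, and you do not carry it out. Your opening reduction (finiteness of $\int_0^1(|\psi''|^p+|\psi'/r|^p+|\sin\psi/r^2|^p)r^{d-1}\dd{r}$) is only a sufficient condition and actually fails for $p\geq d$ whenever $\psi'(0)\neq 0$, as you later concede; so the whole argument rests on the deferred ``asymptotic bookkeeping,'' namely (a) extracting from~\eqref{eq:W0ReducedODE} an expansion of the form $\phi(s)=ae^{s}+O(e^{3(1-\eps)s})$ on $W^u(0)$, keeping track of the cubic error terms (the parameterisation alone only gives $O(e^{(1-\eps)s})$-type decay without a further argument), and (b) verifying that the specific combinations occurring in $D^2u$, such as $(r\psi'\cos\psi-\sin\psi)/r^2$, then decay at a quantitative rate; note that your stated target ``$o(r)$ or better'' for $r\psi'-\psi$ and $\sin\psi-\psi$ is not sufficient for $W^{2,p}$ with $p$ large --- one needs something like $O(r^{2+\delta})$, which your proposed expansion would give but which is never established. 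As written, the proposal is a plan rather than a proof.

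For comparison, the paper obtains the refined decay without a full expansion of $\phi$: since the slow unstable eigenvector is $(1,1,1,1)^T$, the combination $\phi''-\phi'$ annihilates the $e^{s}$ mode and inherits the $e^{3(1-\eps)s}$ decay of the fast coordinate plus the cubic remainder, yielding $\psi''(r)=O(r^{1-3\eps})$, hence $\psi\in C^2([0,1])$ and $\psi(r)=\psi'(0)r+O(r^{3-3\eps})$. It then avoids the full Hessian altogether: writing $\Delta u$ via $L_0\sin\psi$ and $L_1\cos\psi$, the $1/r$ singularities cancel because $L_0$ annihilates linear functions, so $\Delta u\in L^\infty\subset L^p$, and standard elliptic estimates give $u\in W^{2,p}(B^d(0,1);S^d)$ for all $p\in(1,\infty)$. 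Adopting this route (bound $\Delta u$, then invoke elliptic regularity) would both supply the missing quantitative expansion and spare you the cancellation bookkeeping for every second derivative.
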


\begin{proof}
  We fix an $\eps \in (0, 1/3)$, and set $\phi(s) = \psi(e^s)$.
  In this proof we allow all constants to implicitly depend on $\eps$ and $\psi$.

  Recall that $\phi$ solves~\eqref{eq:FullODE} and is in $W^u(0)$ by \Cref{thm:BiharmonicInUnstable}.
  Using~\eqref{eq:W0LocalParam} and~\eqref{eq:W0ReducedODE} we can find a $s_0 < 0$, depending on $\eps$ and $\psi$, such that we can write
  \begin{equation*}
    (\phi(s), \phi'(s), \phi''(s), \phi^{(3)}(s)) = \Gamma(z(s)),
  \end{equation*}
  where
  \begin{equation*}
    \p_s z =
    \begin{pmatrix}
      1 & 0 \\
      0 & 3
    \end{pmatrix}
    z
    + \cO(|z|^3) \text{ as } z \rightarrow 0.
  \end{equation*}
  From this we have
  \begin{equation}\label{eq:zDecayEstimates}
    |z(s)| \leq C e^{(1-\eps) s} \text{ and } |z_2(s)| \leq C e^{3(1-\eps) s} \text{ for } s < s_0.
  \end{equation}
  For $r \in (0, 1]$, we have
  \begin{equation*}
    \p_r^2 \psi(r)
    =
    \frac{\phi''(\log r) - \phi'(\log r)}{r^2}.
  \end{equation*}
  Using~\eqref{eq:W0LocalParam} and~\eqref{eq:zDecayEstimates}, we have, for $s < s_0$,
  \begin{equation*}
    |\p_r^2 \psi(e^{s})|
    =
    e^{-2 s} |\phi''(s) - \phi'(s)|
    \leq
    C e^{(1 - 3\eps) s}.
  \end{equation*}
  From this we have that $\psi \in C^2([0, 1]; \R)$, and, for $r \in [0, 1]$,
  \begin{equation}\label{eq:psiGrowthEstimates}
    \psi''(r) = \cO(r^{1 - 3\eps}),
    \psi'(r) = \psi'(0) + \cO(r^{2 - 3\eps}), \text{ and }
    \psi(r) = \psi'(0) r + \cO(r^{3 - 3\eps}).
  \end{equation}
  For $x \neq 0$, we have
  \begin{equation*}
    \Delta u(x) = \left(\frac{x}{|x|} L_0 f_0(r), L_1 f_1(r) \right),
  \end{equation*}
  where $f_0(r) = \sin \psi(r)$, $f_1(r) = \cos \psi(r)$,
  \begin{align*}
    L_0 f &:= f'' + \frac{d - 1}{r} f' - \frac{d - 1}{r^2} f, \text{ and}
    \\
    L_1 f &:= f'' + \frac{d - 1}{r} f'.
  \end{align*}
  Using~\eqref{eq:psiGrowthEstimates}, for $r \in (0, 1]$, we have $|L_0 f_0(r)| \leq C$ and $|L_1 f_1(r)| \leq C$, and hence $|\Delta u(x)| \leq C$ for $x \neq 0$.
  Therefore, $\Delta u \in L^p(B^d(0, 1); \R^{d+1})$ for all $p \in (1, \infty)$.
  Standard elliptic theory gives $u \in W^{2, p}(B^d(0, 1); S^d)$ for all $p \in (1, \infty)$.
\end{proof}

We can now prove \Cref{thm:SmoothBiharmonicMap}.
See~\cite{GastelZorn12} for a similar approach in a slightly different situation.

\begin{proof}[Proof of \Cref{thm:SmoothBiharmonicMap}]
  First, we show that $u$ is weakly biharmonic.
  We let $\eta \in C^{\infty}_c(B^d(0,1); \R^{d + 1})$ be arbitrary.
  We wish to show that
  \begin{equation*}
    \p_t|_{t = 0} E_2(\Pi(u + t \eta)) = 0,
  \end{equation*}
  where we define $\Pi : \R^{d + 1} \rightarrow S^d$ via $\Pi(x) := x/|x|$.

  From~\cite[(2.1) and (2.2)]{Strzelecki03}, we have
  \begin{equation*}
    \p_t|_{t = 0} E_2(\Pi(u + t \eta))
    =
    2 \int_{B^d(0,1)} \left( \Delta u \cdot \Delta \eta - \sum_{\gamma=1}^{d + 1} \Delta u^{\gamma} \Delta \left( u^{\gamma} u \cdot \eta\right) \right) \, dx.
  \end{equation*}
  We let $\omega \in C^{\infty}(\R^d; [0,1])$ be such that $\omega(x) = 1$ for $|x| \leq 1/2$ and $\omega(x) = 0$ for $|x| \geq 3/4$.
  For $R > 0$, we set $\omega_R(x) = \omega(x/R)$.
  We have
  \begin{equation*}
    \p_t|_{t = 0} E_2(\Pi(u + t \eta))
    =
    \p_t|_{t = 0} E_2(\Pi(u + t (\omega_R \eta)))
    +
    \p_t|_{t = 0} E_2(\Pi(u + t ((1-\omega_R)\eta))).
  \end{equation*}
  \Cref{lem:uSobolev}, gives us
  \begin{equation*}
    \p_t|_{t = 0} E_2(\Pi(u + t (\omega_R \eta)))
    =
    o(1) \text{ as } R \searrow 0.
  \end{equation*}
  Next, we turn our attention towards $\p_t|_{t = 0} E_2(\Pi(u + t ((1-\omega_R)\eta)))$.
  Since the support of $(1-\omega_R)\eta$ is bounded away from the origin, and $u$ is smooth and satisfies the Euler-Lagrange equation~\eqref{eq:BiEulerLagrange} away from the origin, we have $\p_t|_{t = 0} E_2(\Pi(u + t ((1-\omega_R)\eta))) = 0$.
  Therefore, $\p_t|_{t = 0} E_2(\Pi(u + t \eta)) = o(1)$ as $R \searrow 0$ which gives the desired result after taking the limit $R \searrow 0$.

  Since $u \in C(\overline{B^d(0, 1)}; S^d)$, higher interior regularity for weakly-biharmonic maps, for example, see~\cite[Theorem 5.1]{CWY99}, gives $u \in C^\infty(\overline{B^d(0, 1)}; S^d)$.
\end{proof}

Finally, we prove \Cref{thm:InfiniteWrapAround}.

\begin{proof}[Proof of \Cref{thm:InfiniteWrapAround}]
  From~\eqref{eq:W0LocalGraphParam} we know that $W^u(0)$, locally around the origin, is a graph over the $(\phi, \phi'')$-plane.
  Therefore, there exists a $\phi \in C^\infty((-\infty, s_f); \R)$ which solves~\eqref{eq:5dODE} in $W^u(0)$, where $s_f \in (-\infty, \infty]$ is the maximal time of existence of $\phi$, and a $s_0 \in (-\infty, s_f)$ such that $(\phi(s_0), \phi''(s_0)) \not\in \cC \cup -\cC$.
  \Cref{thm:BlowupOrExistsForever} shows that $s_f < \infty$, and that we can assume $\phi(s) \rightarrow \infty$ as $s \nearrow s_f$.
  Since~\eqref{eq:5dODE} is autonomous, we may assume that $s_f = 0$.

  We can define a $\psi \in C([0, 1); \R) \cap C^\infty((0, 1); \R)$ via $\psi(r) := \phi(e^r)$ which solves~\eqref{eq:PsiProblem}.
  Finally, the dilation invariance of~\eqref{eq:PsiProblem} combined with \Cref{thm:SmoothBiharmonicMap} shows that $u = \Upsilon(\psi) \in C^\infty(B^5(0, 1); S^5)$ is biharmonic.
\end{proof}

\bibliographystyle{plain}
\bibliography{master}
\end{document}